\newtheorem{theorem}{Theorem}[section]
\newtheorem{claim}[theorem]{Claim}
\newtheorem{proposition}[theorem]{Proposition}
\newtheorem{lemma}[theorem]{Lemma}
\newtheorem{corollary}[theorem]{Corollary}
\theoremstyle{definition}
\newtheorem{definition}[theorem]{Definition}
\newtheorem{question}[theorem]{Question}
\newtheorem{example}[theorem]{Example}
\newtheorem{remark}[theorem]{Remark}
\DeclareMathOperator{\supp}{\mathrm{supp}}
\newcommand{\N}{\mathbb{N}}
\newcommand{\T}{\mathbb{T}}
\newcommand{\Z}{\mathbb{Z}}
\newcommand{\Q}{\mathbb{Q}}
\newcommand{\R}{\mathbb{R}}
\newcommand{\I}{\mathbb{I}}
\newcommand{\eps}{\varepsilon}
\newcommand{\Tx}{\mathtt{T}_x}
\newcommand{\Ax}{\mathtt{A}_x}
\def\uu{\mathbf{u}}
\renewcommand{\I}{\mathcal I}
\newcommand{\F}{\mathcal F}
\newcommand{\A}{\mathcal A}
\renewcommand{\P}{\mathcal P}
\newcommand{\bx}{\mathrm{(b}_x\mathrm{)}}
\newcommand{\ax}{\mathrm{(a}_x\mathrm{)}}
\newcommand{\axuno}{\mathrm{(a1}_x\mathrm{)}}
\newcommand{\axdue}{\mathrm{(a2}_x\mathrm{)}}
\newcommand{\ix}{\mathrm{(i}_x\mathrm{)}}
\newcommand{\iix}{\mathrm{(ii}_x\mathrm{)}}
\newcommand{\iiix}{\mathrm{(iii}_x\mathrm{)}}
\newcommand{\unox}{\mathrm{(1}_x\mathrm{)}}
\newcommand{\duex}{\mathrm{(2}_x\mathrm{)}}
\newcommand{\trex}{\mathrm{(3}_x\mathrm{)}}
\newcommand{\Ix}{\mathrm{(I}_x\mathrm{)}}
\newcommand{\IIx}{\mathrm{(II}_x\mathrm{)}}
\newcommand{\stellaa}{\mathrm{(**)}}
\newcommand{\dageq}{\mathrm{(\dag)}}
\newcommand{\stareq}{\mathrm{(\star)}}
\numberwithin{equation}{section}
\newcommand{\NB}
{${\clubsuit}\;$}
\newcommand{\NBA}
{${\spadesuit}$}
\newlength{\bibitemsep}\setlength{\bibitemsep}{.0\baselineskip plus .0\baselineskip minus .0\baselineskip}
\newlength{\bibparskip}\setlength{\bibparskip}{0pt}
\let\oldthebibliography\thebibliography
\renewcommand\thebibliography[1]{%
  \oldthebibliography{#1}%
  \setlength{\parskip}{\bibitemsep}%
  \setlength{\itemsep}{\bibparskip}%
}
\title{Nested ideals and topologically $\uu_\I$-torsion elements \\ of the circle group}
\date{\scriptsize Department of Mathematics, Computer Science and Physics - University of Udine}
\author{R. Di Santo\\ \href{mailto:raffaele.disanto@uniud.it}{\scriptsize raffaele.disanto@uniud.it}  
\and D. Dikranjan \\ \href{mailto:dikran.dikranjan@uniud.it}{\scriptsize dikran.dikranjan@uniud.it}
\and A. Giordano Bruno\footnote{The third named author is member of and thanks the National Group for Algebraic and Geometric Structures, and Their Applications (GNSAGA - INdAM)} \\ \href{mailto:anna.giordanobruno@uniud.it}{\scriptsize anna.giordanobruno@uniud.it}
\and H. Weber \\ \href{mailto:hans.weber@uniud.it}{\scriptsize hans.weber@uniud.it}}
\begin{document}
\maketitle

\begin{abstract}
Let $\uu=(u_n)_{n\in\N}$ be a sequence in $\N_+$ with $u_0=1$ and $u_n\mid u_{n+1}$ for every $n\in\N$, and let $b_n:=u_{n+1}/u_n$ for every $n\in\N_+$.
For every $r\in [0,1)$, there exists a unique sequence $(c_n)_{n\in\N_+}$ in $\N$ such that $r= \sum_{n=1}^\infty\frac{c_n}{u_n}$,
with $c_n<b_n$ for every $n\in\N_+$, and $c_n<b_n-1$ for infinitely many $n\in\N_+$; let $\supp(r):=\{n\in\N_+: c_n\neq0\}$ and $\supp_b(r):=\{n\in\N_+: c_n = b_n-1\}$. For $x=r+\Z\in \T$, let $\supp(x) = \supp(r)$ and $\supp_b(x) = \supp_b(r)$.

For an ideal $\I$ of $\N$, an element $x$ of the circle group $\T$ is called a \emph{topologically $\uu_\I$-torsion element of $\T$} if $u_nx$ $\I$-converges to $0$, that is, $\{n\in \N: u_nx \not \in U\}\in \I$ for every neighborhood $U$ of $0$ in $\T$. 
In this paper, under suitable conditions on the ideal $\I$, we completely describe the $\uu_\I$-torsion elements $x$ of $\T$ with $\lim_{n\in\supp(x)}b_n=\infty$ and those with $\{b_n:n\in\supp(x)\}$ bounded. 

According to \cite[Corollary 2.12]{Ghosh}, an element $x \in\T$ with $\{b_n:n\in\supp(x)\}$ bounded is topologically $\uu_\I$-torsion if and only if
$\supp(x)+1\setminus \supp(x)\in \I$ and $\supp(x) \setminus \supp_b(x) \in \I$. 
We characterize the ideals $\I$ of $\N$, naming them  \emph{nested}, such that this equivalence holds 
and we provide examples of non-nested ideals $\I$ that satisfy the above mentioned  suitable conditions, 
so that the equivalence claimed in \cite[Corollary 2.12]{Ghosh} fails for those $\I$. 
For the ideal $\I_d$ of asymptotically dense sets in $\N$ the above described equivalence was anticipated in \cite[Corollary 3.9]{DG1}
(even if there is a gap in the proof). Since $\I_d$ is nested, our general result demonstrates, among others, that \cite[Corollary 3.9]{DG1} holds true.
\end{abstract}

\medskip
\hrule

\smallskip
\noindent{\scriptsize Keywords: characterized subgroup, statistical convergence, ideal convergence, topologically torsion element, arithmetic sequence, nested ideal.}\\
{\scriptsize MSC: 54H11, 40A35, 20K45, 22A10, 11J71.}

{\scriptsize \tableofcontents}

\section{Introduction}

\subsection{Historical background}\label{Sec:Hist}

Let $\T=\R/\Z$ be the circle group denoted additively and let $\varphi\colon \R\to \T$ be the canonical projection; often we write $\bar x$ for  $\varphi(x)$ for the sake of  brevity. Given a sequence $\uu=(u_n)$ of integers, an element $x\in\T$ is \emph{topologically $\uu$-torsion} if $u_nx\to 0$ (see~\cite{Arm,Bra,D1,DPS,Ro,Vi}). Let $t_\uu(\T):=\{x\in\T: u_nx\to 0\}$, which is a subgroup of $\T$.
Following~\cite{BDS}, a subgroup $H$ of $\T$ is said to be \emph{characterized} ({\em by $\uu$}) if there exists a sequence of integers $\uu$ such that $H=t_\uu(\T)$. The introduction of these subgroups was motivated by problems arising from various areas of Mathematics, and they were thoroughly investigated (e.g., see the survey~\cite{DDG}). 

\smallskip
In this paper we are interested in the case of arithmetic sequences: recall that a sequence $\uu = (u_n)$ of integers is \emph{arithmetic} if it is strictly increasing, $u_0=1$ and $u_n\mid u_{n+1}$ for every $n\in\N$, and let $\A$ denote the family of all arithmetic sequences. 

For any $\uu\in\A$, let $b_n:={u_{n}}/{u_{n-1}}$ for every $n\in\N_+$ and $\mathbf b=(b_n)$.  
Following~\cite{DI}, call an infinite  subset $A$ of $\N$ \emph{$b$-bounded} (resp., \emph{$b$-divergent}) if the sequence $(b_n)_{n\in A}$ is bounded (resp., diverges to infinity).
For the sake of convenience we formally consider as {$b$-bounded} also all finite sets.
In particular, $\uu\in\A$ is \emph{$b$-bounded} if $\N_+$ is $b$-bounded, and $\uu$ is \emph{$b$-divergent} if $\N_+$ is $b$-divergent.

Each arithmetic sequence $\uu$ gives rise to a nice representation:
for every $x\in [0,1)$, there exists a unique sequence $(c_n(x))_{n\in\N_+}$ in $\N$ such that
\begin{equation}\label{ex-4}
{x= \sum_{n=1}^\infty\frac{c_n(x)}{u_n},}
\end{equation}
with $c_n(x)<b_n$ for every $n\in\N_+$, and $c_n(x)<b_n-1$ for infinitely many $n\in\N_+$.
When no confusion is possible, we shall simply write $c_n$ in place of $c_n(x)$.
For $x\in[0,1)$, with canonical representation~\eqref{ex-4}, let 
$\supp(x):=\{n\in\N_+: c_n\neq0\}$ and $\supp_b(x):=\{n\in\N_+: c_n=b_n-1\}$.

\smallskip
The following complete description of the $\uu$-torsion elements of $\T$ was given in~\cite[Theorem~2.3]{DI} (the slightly simplified version given below was recently proposed in~\cite{I-torsion}). 

\begin{theorem}[{\cite{I-torsion,DI}}]\label{DiD}
Let $\uu\in\mathcal A$ and $x\in[0,1)$ with $S=\supp(x)$ and $S_b=\supp_b(x)$. Then $\bar x \in t_{\uu}(\T)$ if and only if for all infinite $A\subseteq\N$ the following holds.
\begin{itemize}
   	\item[$\mathrm{(a)}$] If $A$ is $b$-bounded, then:
\begin{itemize}
	\item[$\mathrm{(a1)}$] if $A\subseteq^*S$,  then $A+1 \subseteq^*S$, $A\subseteq^*S_b$ and $\lim\limits_{n\in A}{\frac{c_{n+1}+1}{b_{n+1}}}=1$;
	\item[$\mathrm{(a2)}$] if $A\cap S$ is finite, then $\lim\limits_{n\in A}{\frac{c_{n+1}}{b_{n+1}}}=0$.
\end{itemize}
	\item[$\mathrm{(b)}$] If $A$ is $b$-divergent, then $\lim\limits_{n\in A}{\varphi\left(\frac{c_n}{b_n}\right)}=\lim\limits_{n\in A}{\varphi\left(\frac{c_n+1}{b_n}\right)}=0$.
\end{itemize}
\end{theorem}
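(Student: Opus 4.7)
The plan is to reduce everything to the tail sums
\[
r_n := \sum_{k=n+1}^{\infty} \frac{c_k}{b_{n+1}\cdots b_k} \in [0,1),
\]
which satisfy $u_n x \equiv r_n \pmod 1$ because $u_k \mid u_n$ for $k \le n$. Hence $\bar x \in t_\uu(\T)$ is equivalent to $\min(r_n, 1-r_n) \to 0$. The key workhorse is the recursion
\[
r_n = \frac{c_{n+1}+r_{n+1}}{b_{n+1}} \in \left[\frac{c_{n+1}}{b_{n+1}},\, \frac{c_{n+1}+1}{b_{n+1}}\right],
\]
whose containing interval has length $1/b_{n+1}$: in the $b$-divergent regime it drives $r_n$ to $0$ in $\T$, while in the $b$-bounded regime (using $b_n\ge 2$) $r_n$ can be close to $0$ or $1$ only for the discrete choices $c_{n+1}=0$ or $c_{n+1}=b_{n+1}-1$.

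For the forward direction I assume $r_m\to 0$ in $\T$ and fix infinite $A\subseteq\N$. If $A$ is $b$-bounded with bound $M$, fix $\epsilon<1/M$. When $A\subseteq^* S$, the lower bound $r_{n-1}\ge c_n/b_n\ge 1/M>\epsilon$ forces $r_{n-1}>1-\epsilon$ eventually; the recursion algebraically yields $c_n>b_n-2$, i.e., $c_n=b_n-1$, proving $A\subseteq^* S_b$, and subsequently $r_n>1-M\epsilon$. Then $r_n\le(c_{n+1}+1)/b_{n+1}$ gives $(c_{n+1}+1)/b_{n+1}\to 1$; combined with $b_{n+1}\ge 2$ this rules out $c_{n+1}=0$, so $A+1\subseteq^* S$. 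When $A\cap S$ is finite, $r_{n-1}=r_n/b_n\le 1/2$, so convergence in $\T$ becomes genuine convergence to $0$, whence $c_{n+1}/b_{n+1}\le r_n=b_n r_{n-1}\to 0$. When $A$ is $b$-divergent, the containing interval for $r_{n-1}$ has length $1/b_n\to 0$, hence both $c_n/b_n$ and $(c_n+1)/b_n$ approach integers, giving (b).

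For the reverse direction I argue by contradiction: if $r_n\not\to 0$ in $\T$, pick $\eta>0$ and infinite $B$ with $r_n\in[\eta,1-\eta]$ for $n\in B$, and set $A=B+1$. After passing to a subsequence, $A$ is either $b$-divergent or $b$-bounded. In the $b$-divergent case, (b) forces $r_{m-1}$ arbitrarily close to an integer along $A$, contradicting $r_{m-1}\in[\eta,1-\eta]$. In the $b$-bounded case, pass further to $A_0\subseteq A$ with either $A_0\subseteq S$ or $A_0\cap S=\emptyset$. If $A_0\subseteq S$, (a1) gives $c_m=b_m-1$ on $A_0$, so $1-r_m=b_m(1-r_{m-1})\ge 2\eta$; inspecting $A_0+1$, either it has a $b$-divergent infinite subset, on which $b_{m+1}\to\infty$ combined with $(c_{m+1}+1)/b_{m+1}\to 1$ yields $c_{m+1}/b_{m+1}\to 1$ and hence $r_m\to 1$ (contradiction), or it is $b$-bounded, in which case $(c_{m+1}+1)/b_{m+1}\to 1$ forces $c_{m+1}=b_{m+1}-1$ and (a1) iterates to give $1-r_{m+k}\ge 2^{k+1}\eta$, which is $>1$ for $k>\log_2(1/\eta)$. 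The symmetric argument with (a2) in place of (a1) handles the $A_0\cap S=\emptyset$ case, producing $r_{m+k}\ge 2^{k+1}\eta>1$ for $k$ large.

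The main obstacle is this iteration in the reverse direction: one must track the finite exceptional sets hidden in the ``$\subseteq^*$'' statements as one repeatedly restricts to $b$-bounded subsequences of $A_0+k$, confirm that the hypotheses of (a1) or (a2) are inherited at each step so the geometric doubling of $1-r_{m+k}$ (resp.\ of $r_{m+k}$) persists, and branch correctly whenever some shift $A_0+k$ turns out $b$-divergent, at which stage the already-known conclusion $(c_{m+k}+1)/b_{m+k}\to 1$ (or $c_{m+k}/b_{m+k}\to 0$) must be combined with (b) to close the argument immediately.
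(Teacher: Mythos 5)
Your argument is correct. The paper quotes Theorem~\ref{DiD} from \cite{I-torsion,DI} without reproducing a proof, and your reduction to the tail sums $r_n$ with $u_nx\equiv_\Z r_n$, the recursion $r_n=(c_{n+1}+r_{n+1})/b_{n+1}$ confining $r_n$ to an interval of length $1/b_{n+1}$, and the $b$-bounded/$b$-divergent dichotomy (with the finite geometric doubling $1-r_{m+k}\geq 2^{k+1}\eta$, resp.\ $r_{m+k}\geq 2^{k+1}\eta$, in the converse direction) is essentially the same technique used in those sources and elsewhere in this paper (cf.\ Proposition~\ref{Propo:New} and Proposition~\ref{salvataggio}).
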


 It is worth mentioning that a first result in this direction was given as an answer to a problem posed by Armacost \cite{Arm}; it was obtained independently by Borel~\cite{Bo2}, and somewhat earlier by  Dikranjan, Prodanov and Stoyanov~\cite{DPS}
(although the latter solution was not complete and was completed subsequently in \cite{DiD}): 


\begin{theorem}[{\cite{DPS,Bo2,DiD}}]\label{Prob:A}
Let $x\in[0,1)$ and $\mathbf n=((n+1)!)$. Then $\bar x\in t_\mathbf n(\T)$ if and only if $\varphi\left(\frac{c_n}{n+1}\right)\to 0$.
\end{theorem}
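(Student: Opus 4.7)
The plan is to derive Theorem~\ref{Prob:A} as a quick corollary of Theorem~\ref{DiD}. First I would verify that $\mathbf{n} = ((n+1)!)_{n\in\N}$ is arithmetic in the sense used above: $u_0 = 1! = 1$, $u_n \mid u_{n+1}$, and the sequence is strictly increasing. The associated ratio sequence is $b_n = u_n/u_{n-1} = (n+1)!/n! = n+1$, so $b_n \to \infty$. This is the key observation that will collapse the general criterion into a single condition.

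Since $b_n \to \infty$, every infinite $A \subseteq \N$ is $b$-divergent. Consequently, case $\mathrm{(a)}$ of Theorem~\ref{DiD} is vacuous for $\uu = \mathbf{n}$, and only case $\mathrm{(b)}$ remains. Moreover, as $b_n \to \infty$, the condition ``for every infinite $A \subseteq \N$, $\lim_{n\in A} \varphi(c_n/b_n) = \lim_{n\in A} \varphi((c_n+1)/b_n) = 0$'' is equivalent to the plain limits
\[
\lim_{n\to\infty} \varphi\!\left(\frac{c_n}{n+1}\right) = 0 \quad \text{and} \quad \lim_{n\to\infty} \varphi\!\left(\frac{c_n+1}{n+1}\right) = 0.
\]
Thus Theorem~\ref{DiD} gives $\bar x \in t_{\mathbf n}(\T)$ iff both of these limits hold.

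It remains only to show that the second limit is a free consequence of the first. Since $\varphi$ is a group homomorphism,
\[
\varphi\!\left(\frac{c_n+1}{n+1}\right) = \varphi\!\left(\frac{c_n}{n+1}\right) + \varphi\!\left(\frac{1}{n+1}\right),
\]
and $\varphi(1/(n+1)) \to 0$ in $\T$. Hence $\varphi(c_n/(n+1)) \to 0$ already forces $\varphi((c_n+1)/(n+1)) \to 0$, so the two limits in the display above are equivalent to the single condition $\varphi(c_n/(n+1)) \to 0$ appearing in the statement. The converse implication is immediate.

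There is really no obstacle here; the only thing to be careful about is confirming that the $b$-bounded clauses of Theorem~\ref{DiD} are vacuous (which is automatic from $b_n \to \infty$) and using that $\varphi(1/(n+1)) \to 0$ to collapse the two limit conditions into one.
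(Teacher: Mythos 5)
Your derivation is correct: with $u_n=(n+1)!$ one has $b_n=n+1\to\infty$, so every infinite subset of $\N$ is $b$-divergent, clause $\mathrm{(a)}$ of Theorem~\ref{DiD} is vacuous, and the observation that $\varphi\left(\frac{c_n+1}{n+1}\right)-\varphi\left(\frac{c_n}{n+1}\right)=\varphi\left(\frac{1}{n+1}\right)\to 0$ legitimately collapses the two limits in clause $\mathrm{(b)}$ into the single condition $\varphi\left(\frac{c_n}{n+1}\right)\to 0$. The paper itself gives no proof of this statement (it is cited from the literature), but it explicitly presents it as a special case subsumed by Theorem~\ref{DiD}, so your specialization is exactly the intended route.
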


In this paper we are mainly interested in the following particular cases of this theorem, namely, when $\supp(x)$ is either $b$-bounded or $b$-divergent.

\begin{corollary}[{\cite[Corollary 2.4]{DiD},~\cite[Corollary 3.2]{DI}}] \label{bounded}
Let $\uu\in\A$ and $x\in [0,1)$ with $S=\supp(x)$. If $S$ is $b$-bounded, then 
 $\bar x \in t_{\uu}(\T)$ if and only if $S$ is finite, namely, $\bar x$ is torsion.
\end{corollary}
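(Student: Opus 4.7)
The \emph{if} direction is essentially immediate. If $S=\supp(x)$ is finite, then in the canonical representation \eqref{ex-4} all but finitely many $c_n$ vanish, so $x$ is rational with denominator dividing some $u_N$; hence $u_n\bar x=0$ in $\T$ for every $n\ge N$, so $\bar x$ is torsion and, a fortiori, belongs to $t_\uu(\T)$.

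For the \emph{only if} direction I will argue by contradiction: assume $\bar x\in t_\uu(\T)$, that $S$ is $b$-bounded, and that $S$ is infinite. Then $A:=S$ is an infinite $b$-bounded subset of $\N$ and we may feed it into Theorem~\ref{DiD}(a). Since trivially $A\subseteq^* S$, condition (a1) applies and yields simultaneously
\begin{equation*}
S+1\subseteq^* S \qquad\text{and}\qquad S\subseteq^* S_b.
\end{equation*}

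The first inclusion, together with the assumption that $S$ is infinite, is the key combinatorial point: if for all but finitely many $n\in S$ also $n+1\in S$, then $S$ contains a cofinite tail $[N,\infty)\cap \N_+$ for some $N\in\N_+$. Combined with the second inclusion $S\subseteq^* S_b$, this forces $c_n=b_n-1$ for all sufficiently large $n$. But this directly contradicts the defining condition of the canonical representation \eqref{ex-4}, which requires $c_n<b_n-1$ for infinitely many $n$. Hence $S$ must be finite.

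The argument is therefore essentially a direct specialization of Theorem~\ref{DiD}: the only real step is to notice that plugging $A=S$ into clause (a1) is legitimate and that its two conclusions ($S+1\subseteq^* S$ and $S\subseteq^* S_b$) together collide with the uniqueness part of the greedy $\uu$-expansion. There is no serious obstacle; the limit assertions in (a1) and the whole of clause (a2) and clause (b) of Theorem~\ref{DiD} are not needed here because the $b$-boundedness hypothesis lets us focus exclusively on the single set $A=S$.
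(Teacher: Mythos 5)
Your proof is correct and follows essentially the same route as the paper's: the paper deduces the two conditions $S+1\subseteq^* S$ and $S\subseteq^* S_b$ from its Theorem~\ref{Nuovo:Th} (specialized to $\I=\F in$) rather than from Theorem~\ref{DiD}(a1) applied to $A=S$, but the combinatorial core is identical --- an infinite $S$ with $(S+1)\setminus S$ finite must be cofinite, which forces $S_b$ to be cofinite and contradicts the requirement $c_n<b_n-1$ for infinitely many $n$ in the canonical representation~\eqref{ex-4}. The ``if'' direction is handled the same elementary way in both.
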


\begin{corollary}[{\cite[Corollary~3.4]{DI}}]\label{unbounded} 
Let $\uu\in\A$ and $x\in [0,1)$ with $S=\supp(x)$. If $S$ is $b$-divergent, then $\bar x \in t_{\uu}(\T)$ if and only if:
\begin{itemize}
    \item[$\mathrm{(I)}$] $\lim\limits_{n\in S}\varphi\left(\frac{c_{n}}{b_{n}}\right)=0$; 
    \item[$\mathrm{(II)}$] for every infinite $D\subseteq S$ such that $D-1$ is $b$-bounded, $\lim\limits_{n\in D}\frac{c_{n}}{b_{n}}=0$.
\end{itemize}
\end{corollary}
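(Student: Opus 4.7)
The plan is to read off Corollary \ref{unbounded} as a specialization of Theorem \ref{DiD} once the hypothesis that $S := \supp(x)$ is $b$-divergent is imposed. The whole argument reduces to bookkeeping: splitting an index set according to membership in $S$ (or in $S \pm 1$) and exploiting the tension between $b$-divergence of $S$ and $b$-boundedness of the sets appearing in clause (a) of Theorem \ref{DiD}.

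For the forward direction, assume $\bar x \in t_\uu(\T)$. Condition (I) is obtained at once by applying clause (b) of Theorem \ref{DiD} to $A := S$, which is infinite and $b$-divergent by hypothesis. For condition (II), given an infinite $D \subseteq S$ with $D-1$ $b$-bounded, I would split $D = D_1 \sqcup D_2$, where $D_2 := \{n \in D : n-1 \in S\}$. The key observation is that $D_2$ must be finite: otherwise $D_2 - 1$ would be an infinite subset both of $S$ (hence $b$-divergent) and of $D-1$ (hence $b$-bounded), which is impossible. Then $A := D_1 - 1$ is infinite, $b$-bounded, and disjoint from $S$, so clause (a2) of Theorem \ref{DiD} yields $\lim_{n \in D_1} c_n/b_n = 0$, and the finiteness of $D_2$ upgrades this to $\lim_{n \in D} c_n/b_n = 0$.

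For the converse, assume (I), (II), and $S$ $b$-divergent, and verify clauses (a) and (b) of Theorem \ref{DiD} for an arbitrary infinite $A \subseteq \N$. Clause (a1) is vacuously satisfied: if $A$ were $b$-bounded with $A \subseteq^* S$, then $A \cap S$ would be cofinite in $A$, hence infinite, and its sequence $(b_n)_{n \in A \cap S}$ would be simultaneously divergent (since it sits inside $S$) and bounded (since it sits inside $A$). For clause (a2), with $A$ $b$-bounded and $A \cap S$ finite, set $B := \{n \in A \setminus S : n+1 \in S\}$; on $A \setminus (B \cup S)$ the coefficient $c_{n+1}$ vanishes, while to handle $B$ one applies (II) to $D := B + 1 \subseteq S$, noting that $D - 1 = B \subseteq A$ is $b$-bounded (the case $B$ finite is trivial). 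For clause (b), with $A$ $b$-divergent, split $A = (A \cap S) \sqcup (A \setminus S)$: on $A \cap S$ the limit $\varphi(c_n/b_n) \to 0$ is inherited from (I), while on $A \setminus S$ the coefficient $c_n$ is zero; in both parts $\varphi((c_n+1)/b_n)$ differs from $\varphi(c_n/b_n)$ by $\varphi(1/b_n)$, which tends to $0$ because $b_n \to \infty$ on $A$.

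The main (and very mild) obstacle is spotting that the alternative case $A \subseteq^* S$ of clause (a1) of Theorem \ref{DiD} is obstructed under the standing assumption that $S$ is $b$-divergent and $A$ is $b$-bounded; the same incompatibility is what collapses the forward half of (II) into clause (a2). Once this incompatibility is identified, the corollary is essentially a matter of two symmetric splittings of the index set along $S$ and $S \pm 1$.
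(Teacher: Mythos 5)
Your proof is correct and follows exactly the route the paper intends: Corollary~\ref{unbounded} is presented there as a particular case of Theorem~\ref{DiD} (cited from \cite{DI}), and your derivation --- applying clause (b) to $S$ for (I), reducing (II) to (a2) via the observation that an infinite subset of the $b$-divergent set $S$ cannot be $b$-bounded, and noting that this same incompatibility makes (a1) vacuous in the converse --- is the standard specialization. No gaps.
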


In the last few years some new trends appeared in the field of characterized subgroups, based on weaker notions of convergence. Next we recall what is necessary for this paper,  for more detail see the survey paper~\cite{BCsurvey}. For $A\subseteq\N$, denote 
$$A(n):=\{i\in A: i\leq n\}=A\cap[0,n].$$ 
Following~\cite{BDP} (see~\cite{BDH}), fixed $\alpha\in(0,1]$, the \emph{upper natural density of order $\alpha$} is $d_\alpha(A):=\limsup_{n\to\infty}\frac{|A(n)|}{n^\alpha}.$
For $\alpha=1$, $d:=d_1$ is the classical upper natural density. As in~\cite{BDH}, a sequence $(x_n)$ in $\T$ is said to \emph{$\alpha$-statistically converge} to $x\in \T$, denoted by $x_n\overset{s_\alpha}\longrightarrow x$, if for every neighborhood $U$ of $x$ in $\T$, $d_\alpha(\{n\in\N: x_n\not\in U\})=0$.

Following~\cite{BDH}, for a sequence of integers $\uu$ and $\alpha\in(0,1]$, let 
$t_\uu^\alpha(\T):=\{x\in\T: u_nx\overset{s_\alpha}\longrightarrow 0\}$. A subgroup $H$ of $\T$ is \emph{$\alpha$-statistically characterized} if there exists a sequence of integers $\uu$ such that $H=t_\uu^\alpha(\T)$, and the elements of $t_\uu^\alpha(\T)$ are called \emph{topologically $\uu_\alpha$-torsion elements of $\T$}. 
The $1$-statistically characterized subgroups of $\T$ are exactly the \emph{statistically characterized} $t_\uu^s(\T)$ ones introduced and studied in the seminal paper~\cite{DDB}.

\smallskip
A complete description of the topologically $\uu_\alpha$-torsion elements of $\T$ for $\uu\in\A$ and any $\alpha\in(0,1]$ was given in~\cite[Theorem 2.1]{DG1}, which is the counterpart of Theorem~\ref{DiD}. Moreover, inspired by Corollaries~\ref{bounded} and~\ref{unbounded}, respectively, the following consequences of~\cite[Theorem 2.1]{DG1} were presented in~\cite{DG1}. 
For two subsets $A, B$ of $\N$, the notation $A \subseteq^{\alpha } B$ means that $d_{\alpha} (A \setminus B)=0$ and $A\subseteq_\alpha B$ that $A\subseteq B$ and $B\subseteq^\alpha A$.

The following corollary, concerning the case of $b$-bounded support, is~\cite[Corollary~3.9]{DG1} but a careful look at its proof in~\cite{DG1} shows that only the necessity is proved (twice), while the other implication (i.e., the sufficiency) is not discussed there at all. 
As a byproduct of our results (more specifically, Corollary~\ref{CoroGhosh} -- see also Remark \ref{New:Rem}) we fix the problem, so Corollary~\ref{DG13.9} holds true.

\begin{corollary}[{\cite[Corollary 3.9]{DG1}}]\label{DG13.9}
Let $\uu\in\mathcal A$, $\alpha\in(0,1]$ and $x \in [0,1)$ with $S=\supp(x)$ and $S_b=\supp_b(x)$.
If $S$ is $b$-bounded, then $\bar x\in t_\uu^{\alpha}(\T)$ if and only if: 
\begin{itemize}
   \item[$\mathrm{(i)}$] $S+1\subseteq^\alpha S$; 
   \item[$\mathrm{(ii)}$] $S\subseteq^\alpha S_b$.
\end{itemize}
\end{corollary}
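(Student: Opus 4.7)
My plan is to analyze $T_n \in [0, 1)$, the unique representative of $u_n x$ modulo $1$, together with $D_n := \min(T_n, 1 - T_n)$, which equals the distance from $\overline{u_n x}$ to $0$ in $\T$; the basic tool is the recursion $T_n = (c_{n+1} + T_{n+1})/b_{n+1}$. Recall that $\bar x \in t_\uu^\alpha(\T)$ is equivalent to $d_\alpha(\{n : D_n \geq \eps\}) = 0$ for every $\eps > 0$. The $b$-bounded hypothesis furnishes $M := \sup\{b_i : i \in S\} < \infty$, which will be essential for the necessity and, perhaps surprisingly, unused in the sufficiency.

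For the \emph{necessity}, suppose $\bar x \in t_\uu^\alpha(\T)$. If $n + 1 \in S \setminus S_b$, then $1 \leq c_{n+1} \leq b_{n+1} - 2$ and $b_{n+1} \leq M$, giving $T_n \in [1/b_{n+1}, 1 - 1/b_{n+1}]$ and $D_n \geq 1/M$; since $\{n : D_n \geq 1/M\}$ has $\alpha$-density $0$, a shift by $1$ yields (ii). For (i), if $n \in S$ and $n + 1 \notin S$, then $c_{n+1} = 0$ and $T_n < 1/b_{n+1}$; a short case split on whether $n \in S_b$ or $n \in S \setminus S_b$, using $b_n \leq M$, gives $D_{n-1} \geq 1/(2M)$, so $\{n \in S : n+1 \notin S\}$ has $\alpha$-density $0$ and a further shift produces (i).

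For the \emph{sufficiency}, assume (i) and (ii). The key preliminary observation is that (i) also forces the ``left boundary'' $L := \{n \notin S : n+1 \in S\}$ to have $\alpha$-density $0$: since $|S \cap [0, N]|$ and $|(S+1) \cap [0, N]|$ differ by at most $1$ for every $N$, we get $d_\alpha((S+1) \setminus S) = d_\alpha(S \setminus (S+1))$, and $L$ is a shift of $S \setminus (S+1)$. Fix $\eps > 0$, choose $k$ with $2^{-k} < \eps$, and set
\[
  F := (S \setminus S_b) \cup \bigl((S \setminus S_b) - 1\bigr) \cup \{n \in S : n+1 \notin S\} \cup L, \qquad F_k := \bigcup_{i=0}^{k-1} (F - i).
\]
Each piece of $F$ has $\alpha$-density $0$, hence $d_\alpha(F_k) = 0$, since $\alpha$-density is invariant under bounded shifts. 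For each integer $m$ define the ``type'' $\tau_m$ to be $+$ if $m \notin S$, $0$ if $m \in S \setminus S_b$, and $-$ if $m \in S_b$. If $n \notin F_k$, the first two summands of $F$ give $\tau_{n+j} \neq 0$ for $j = 0, \ldots, k$; the third rules out $- \to +$ transitions, and the fourth rules out $+ \to -$ transitions between consecutive indices in $\{n, \ldots, n+k\}$. Hence $\tau_{n+1} = \cdots = \tau_{n+k}$, all $+$ or all $-$. A telescoping sum then gives $1 - T_n = (1 - T_{n+k})/\prod_{j=1}^k b_{n+j} \leq 2^{-k}$ in the all-$-$ case and $T_n = T_{n+k}/\prod_{j=1}^k b_{n+j} \leq 2^{-k}$ in the all-$+$ case, so $D_n < \eps$. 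Consequently $\{n : D_n \geq \eps\} \subseteq F_k$ has $\alpha$-density $0$, proving $\bar x \in t_\uu^\alpha(\T)$.

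The main obstacle is the left-boundary observation: without it, the configuration ``$n + 1 \notin S$, $n + 2 \in S_b$, $b_{n+1}$ small'' keeps $T_n$ bounded away from both $0$ and $1$, and neither (i) nor (ii) in isolation eliminates it. I expect this is precisely the subtlety that was overlooked in the sufficiency direction of \cite{DG1}.
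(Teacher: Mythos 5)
Your proof is correct, and it takes a genuinely different route from the paper. The paper obtains the necessity of (i) and (ii) as a consequence of the general element-wise characterization of $t_\uu^\I(\T)$ (Theorem~\ref{conjecture}, imported from \cite{I-torsion} and used as a black box in Lemma~\ref{Necessity2.12*}), whereas you prove it by direct inspection of $T_n$ and $\|u_nx\|$; your case analysis at the right boundary of $S$ (distinguishing $n\in S_b$ from $n\in S\setminus S_b$ to get $D_{n-1}\geq 1/(2M)$) is exactly what that machinery hides. For the sufficiency, your ``left-boundary observation'' is precisely the statement that $\I_\alpha$ is a \emph{nested} ideal ($\rho(S)\in\I_\alpha\Rightarrow\lambda(S)\in\I_\alpha$), which the paper isolates as Definition~\ref{nesteddef} and verifies for $\I_\alpha$ by the same counting argument (Example~\ref{DenId:is:nested}); you then correctly identify this as the step missing from \cite{DG1}. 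From that point the two arguments diverge: the paper passes to the $\flat$-truncation $x^\flat$, writes it as a difference $\alpha^{(l)}(x)-\alpha^{(r)}(x)$ of atomic elements supported on $\lambda(S)-1$ and $\rho(S)$, and concludes from $\supp\in\I\Rightarrow$ torsion (Lemma~\ref{Lemma2.2}); you instead run a single telescoping estimate showing $\|u_nx\|\leq 2^{-k}$ off the exceptional set $F_k$. Your version is more elementary and entirely self-contained (it also makes transparent that $b$-boundedness is only needed for necessity, matching Theorem~\ref{in}), while the paper's factorization through nestedness and the atomic decomposition buys the result for arbitrary nested translation invariant free ideals and feeds into the characterization of Theorem~\ref{Cor:May12prec}. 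Your argument would in fact generalize verbatim to any translation invariant free nested ideal, since finite unions of shifts of sets in $\I$ stay in $\I$.
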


The case of $b$-divergent support is handled by the following result, in which we slightly modify condition $\mathrm{(I)}$, as the set $D\subseteq_\alpha S$ with  $\lim\limits_{n \in D} \varphi\left(\frac{c_n}{b_n}\right)=0$ need not exist in case $d_\alpha(D)=0$ (see Remark~\ref{notIx}).

\begin{corollary}[{\cite[Corollary 3.10]{DG1}}]\label{C3.10}
Let $\uu\in\mathcal A$, $\alpha\in(0,1]$ and $x \in [0,1)$ with $S=\supp(x)$. If $S$ is $b$-divergent, then $\bar x\in t_\uu^{\alpha}(\T)$ if and only if: 
\begin{itemize}
\item[$\mathrm{(I)}$] either $d_\alpha(S)=0$ or $d_\alpha(S)>0$ and there exists $D\subseteq_\alpha S$ such that $\lim\limits_{n \in D} \varphi\left(\frac{c_n}{b_n}\right)=0$; 
\item[$\mathrm{(II)}$] for every $D\in\P(\N)$ such that $D \subseteq^\alpha S$ and $D-1$ is $b$-bounded, there exists $D' \subseteq_\alpha D$ such that $\lim\limits_{n \in D'} \frac{c_n}{b_n}=0$.
\end{itemize} 
\end{corollary}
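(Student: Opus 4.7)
The plan is to deduce Corollary~\ref{C3.10} from the general $\alpha$-statistical characterization of topologically $\uu_\alpha$-torsion elements in \cite[Theorem~2.1]{DG1}, which is the statistical counterpart of Theorem~\ref{DiD}: each quantifier ``for every infinite $A\subseteq\N$, $\lim_{n\in A}\cdots = 0$'' is replaced by ``there exists $A'\subseteq_\alpha A$ such that $\lim_{n\in A'}\cdots = 0$''. Combining this with the elementary identity
\[
u_{n-1}\bar x \equiv \frac{c_n}{b_n} + r_{n-1}\pmod 1, \qquad 0\le r_{n-1}<\frac{1}{b_n},
\]
reduces the task to bookkeeping on the intersection of the parameter sets with $S$ and their $b$-bounded/$b$-divergent pieces.

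For the necessity, assume $\bar x\in t_\uu^\alpha(\T)$. If $d_\alpha(S)=0$ the first alternative of (I) holds trivially; if $d_\alpha(S)>0$, apply the statistical version of clause (b) of Theorem~\ref{DiD} along $A=S$: since $S$ is $b$-divergent, $1/b_n\to 0$ on $S$, and the $\alpha$-statistical vanishing of $u_{n-1}\bar x$ yields a set $D\subseteq_\alpha S$ on which $\varphi(c_n/b_n)\to 0$, delivering (I). For (II), given $D\subseteq^\alpha S$ with $D-1$ being $b$-bounded, apply the statistical version of (a2) to $A=D-1$: because $\{b_n:n\in D\}$ is bounded, the resulting $\varphi$-convergence upgrades to the plain limit $c_n/b_n\to 0$ on the extracted $D'\subseteq_\alpha D$, exactly as in Corollary~\ref{unbounded}(II).

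For the sufficiency, assume (I) and (II) hold; it suffices to verify the $\alpha$-statistical analogues of clauses (a) and (b) of Theorem~\ref{DiD}. For (b) along a $b$-divergent $A$, decompose $A=(A\cap S)\cup(A\setminus S)$: on $A\setminus S$ one has $c_n=0$ and $r_{n-1}\to 0$ as $b_n\to\infty$ along $A$, while on $A\cap S$ condition (I) furnishes the required subset of the right $\alpha$-density. For (a) along a $b$-bounded $A$, the sub-case $A\cap S$ finite is immediate; in the sub-case $A\subseteq^* S$, apply (II) to the shifted set $D=A+1$ (or a negligible modification thereof), whose shift $D-1=A$ is $b$-bounded and which lies in $S$ up to a set of $\alpha$-density zero, so (II) supplies $D'\subseteq_\alpha D$ with $c_n/b_n\to 0$ and hence the required convergence.

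The main obstacle is the careful treatment of the degenerate case $d_\alpha(S)=0$, where the formulation of (I) in \cite{DG1} is problematic: as Remark~\ref{notIx} will exhibit, a set $D\subseteq_\alpha S$ on which $\varphi(c_n/b_n)\to 0$ need not exist, even though $\bar x\in t_\uu^\alpha(\T)$ holds vacuously. The split form of (I) in the present statement is exactly what sidesteps this pathology. A secondary delicate point is that passing from (a2) of Theorem~\ref{DiD} to (II) above requires tracking how $\alpha$-density behaves under the shift $A\mapsto A+1$; on $b$-bounded subsets of $S$ one checks that the discrepancy lies in an $\alpha$-negligible set, so the transfer goes through.
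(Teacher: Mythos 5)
Your overall route—reduce both directions to the element\-/wise characterization of $t_\uu^{\I_\alpha}(\T)$ ([DG1, Theorem~2.1], i.e.\ the $\I_\alpha$-case of Theorem~\ref{conjecture}) and then show that, for $b$-divergent $S$, clause $\bx$ is equivalent to $\Ix$ and clause $\ax$ collapses to $\axdue$, which is equivalent to $\IIx$—is exactly the content of the paper's Proposition~\ref{III} combined with Corollary~\ref{Last:corollary-old}. The only genuine difference is that the paper, for the sufficiency, deliberately bypasses the general theorem and gives a self-contained argument (Proposition~\ref{salvataggio}), precisely because the sufficiency half of Theorem~\ref{conjecture} is heavy and the original proof of [DG1, Theorem~2.1] has a gap; your version is legitimate provided you cite the corrected general theorem rather than [DG1] itself. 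Your necessity argument is correct (modulo a slip: in (II) it is $D-1$, not $D$, that is $b$-bounded, and $\axdue$ already yields the plain limit, so no ``upgrade'' is involved).

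The sufficiency verification of clause $\ax$, however, is garbled in a way that matters. Since $S$ is $b$-divergent and $A$ is $b$-bounded with $A\notin\I_\alpha$, the set $A\cap S$ is automatically finite, so the premise $A\subseteq^{\I_\alpha}S$ of $\axuno$ can never hold and $\axuno$ is vacuous; the sub-case you dismiss as ``immediate'' ($A\cap S$ finite, i.e.\ $\axdue$) is the only one requiring work, and it is not immediate because $c_{n+1}$ for $n\in A$ is governed by $A+1$, which need not be $b$-bounded nor contained in $S$. Moreover, your application of (II) to $D=A+1$ is not licensed: (II) requires $D\subseteq^\alpha S$, and $A+1$ may fail this badly (its claimed inclusion in $S$ ``up to $\alpha$-density zero'' is false in general). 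The correct move, as in Proposition~\ref{III}, is to set $D=(A+1)\cap S$: if $D\in\I_\alpha$ one uses $c_n=0$ on $(A+1)\setminus S$ directly, and if $D\notin\I_\alpha$ then $D\subseteq S$ and $D-1\subseteq A$ is $b$-bounded, so (II) yields $D'\subseteq_\alpha D$ with $\frac{c_n}{b_n}\to0$, which one merges with $(A+1)\setminus S$ and shifts back to obtain $A'\subseteq_{\I_\alpha}A$ with $\frac{c_{n+1}}{b_{n+1}}\to0$. With the sub-cases relabelled and this correction made, your argument closes.
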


\subsection{Ideal convergence and main results}

The $\alpha$-statistical convergence and the usual one are particular cases of the ideal convergence, introduced by Cartan~\cite{Cartan} as follows.

\smallskip
First recall that a non-empty subfamily $\I$ of the power set $\P(\N)$ of $\N$ is an \emph{ideal of $\N$} if 
for every $A,B\in\I$,  $A\cup B\in \I$,
and for $A, B \in \P(\N)$, $B\in\I$ implies $A\in\I$ whenever $A\subseteq B$. 
 An easy example of ideal of $\N$ is the family $\F in$ of all finite subsets of $\N$, and we always assume that an ideal $\I$ of $\N$ contains $\F in$, that is, $\I$ is \emph{free}. 
An ideal $\I$ of $\N$ is a \emph{$P$-ideal}  if for every countable family $\{A_n:n\in\N\}\subseteq \I$ there exists $A\subseteq \N$ such that $A_n\subseteq^* A$ for all $n\in\N$. Moreover, following~\cite[Definition 2.4]{DasGhosh}, the ideal $\mathcal I$ is \emph{translation invariant} if for every $A\in\mathcal \I$, $(A+n)\cap\N\in\mathcal I$ for every $n\in\Z$.
The ideals $\F in$ and $\I_\alpha$, for every $\alpha\in(0,1]$, are translation invariant (analytic) free $P$-ideals.

For two subsets $A, B$ of $\N$ and an ideal $\I$ of $\N$, we use the following notations:
\[\begin{split}A \subseteq^{\I} B\ \text{if}\ A \setminus B \in \I,\quad A \subseteq_{\I} B\ \text{if}\ A \subseteq B\ \text{and}\ B \setminus A \in \I \\
\quad\text{and}\quad 
A =^{\I} B\ \text{if}\ A \subseteq^{\I} B\ \text{and}\ B \subseteq^{\I} A\quad \text{(i.e., $A\Delta B\in\I$)}.\end{split}\]
Fixed $\uu\in\A$ and a free ideal $\I$ of $\N$, we say that a subset $X$ of $\N$ is \emph{$b$-bounded mod $\I$} (resp., \emph{$b$-divergent mod $\I$}) if there exists a $b$-bounded (resp., $b$-divergent) $A\subseteq_\I X$.

\smallskip
For a free ideal $\I$ of $\N$, a sequence $(x_n)$ in $\T$ is said to {\em $\I$-converge} to a point $x\in \T$, denoted by $x_n\overset{\I}\longrightarrow x$, if $\{n\in \N: x_n \not \in U\}\in \I$ for every neighborhood $U$ of $x$ in $\T$.  Following~\cite{DasGhosh,Ghosh}, given a sequence $\uu$ in $\Z$, let $t_\uu^\I(\T) :=\{x\in \T: u_nx\overset{\I}\longrightarrow 0 \}$. A subgroup $H$ of $\T$ is {\em $\I$-characterized} if there exists a sequence of integers $\uu$ such that $H= t_\uu^\I(\T)$, and the elements of $t_\uu^\I(\T)$ are called \emph{topologically $\uu_\I$-torsion elements of $\T$}. 

The usual convergence is the $\F in$-convergence, while, for every $\alpha\in(0,1]$, the $\alpha$-statistical convergence is the $\I_\alpha$-convergence, where $\I_\alpha:=\{A\subseteq\N: d_\alpha(A)=0\}$; usually one denotes $\I_d:=\I_1$. So, first of all, $t_\uu^{\F in}(\T)=t_\uu(\T)$ and the inclusion $t_\uu(\T)\subseteq t_\uu^\I(\T)$ always holds, as $\I$ is free.
Moreover, $t_\uu^{\I_\alpha}(\T)=t_\uu^\alpha(\T)$ for every $\alpha\in(0,1]$, and in particular $t_\uu^{\I_d}(\T)=t_\uu^s(\T)$.

\smallskip
Next comes the main theorem of~\cite{I-torsion}, which covers~\cite[Theorem~2.3]{DI} (se Theorem~\ref{DiD}) and~\cite[Theorem 2.1]{DG1} mentioned before.  

\begin{theorem}[\cite{I-torsion}]\label{conjecture} 
Let $\uu\in\mathcal A$, let $\I$ be a translation invariant free $P$-ideal of $\N$ and $x \in[0,1)$ with $S=\supp(x)$ and $S_b=\supp_b(x)$. Then $\bar x\in t^{\I}_\uu(\T)$ if and only if for all $A \in\P(\N)\setminus\I$ the following holds.
 \begin{itemize}
   \item[$\ax$]  If $A$ is $b$-bounded, then:
 \begin{itemize}
     \item[$\axuno$] if $A \subseteq^{\I} S$, then $A+1\subseteq^{\I} S$,  $A \subseteq^{\I} S_b$ and there exists $A' \subseteq_{\I} A$ such that  $\lim\limits_{n \in A'} \frac{c_{n+1}+1}{b_{n+1}}=1$;
     \item[$\axdue$]  if $A\cap S \in \I$, then there exists $A'\subseteq_\I A$ such that 
$\lim\limits_{n \in A'} \frac{c_{n+1}}{b_{n+1}}=0$.
\end{itemize}
\item[$\bx$] If $A$ is $b$-divergent, then there exists $B \subseteq_{\I} A$ such that $\lim\limits_{n \in B} \varphi\left(\frac{c_n}{b_n}\right)=0$.
\end{itemize}
\end{theorem}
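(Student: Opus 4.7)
The plan is to mimic the classical proof of Theorem~\ref{DiD} given in \cite{DI}, translating ordinary convergence into $\I$-convergence throughout. Two structural properties of $\I$ enable this translation: being a free $P$-ideal, which lets us exchange $\I$-convergence for ordinary convergence along $\subseteq_\I$-subsequences; and being translation invariant, which lets us freely shift $A \mapsto A \pm 1$ modulo $\I$. The arithmetical engine is the identity
\[
u_N x \equiv T_N(x) := \sum_{k > N} \frac{c_k\, u_N}{u_k} \pmod{1}, \qquad T_N(x) \in [0,1),
\]
so $\bar x \in t_\uu^\I(\T)$ if and only if $T_N(x) \overset{\I}\longrightarrow 0$ in $\T$. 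The estimate $T_N(x) \in \bigl[c_{N+1}/b_{N+1},\, (c_{N+1}+1)/b_{N+1}\bigr)$ then drives the case analysis.

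First I record a general $P$-ideal lemma: for any sequence $(y_N)$ in a metric space, $y_N \overset{\I}\longrightarrow y$ if and only if for every $A \in \P(\N)\setminus\I$ there exists $A' \subseteq_\I A$ with $\lim_{N \in A'} y_N = y$ in the usual sense. The nontrivial direction merges the countable family $B_k := A \cap \{N : d(y_N, y) > 1/k\} \in \I$ into a single $\I$-set $B$ via the $P$-ideal property and takes $A' := A \setminus B$. This is the bridge between the $\I$-convergence formulation of Theorem~\ref{conjecture} and the subsequence reasoning of \cite{DI}.

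For necessity, given $\bar x \in t_\uu^\I(\T)$ and $A \notin \I$, extract $A' \subseteq_\I A$ with $T_N(x) \to 0$ along $A'$ and apply Theorem~\ref{DiD} to $A'$: the $\subseteq^*$-conclusions on $A'$ upgrade automatically to $\subseteq^\I$-conclusions on $A$ (since $A \setminus A' \in \I$), while translation invariance of $\I$ handles statements involving $A+1$. For sufficiency, assume $\ax$ and $\bx$ and fix $A \notin \I$; by the $P$-ideal lemma it suffices to exhibit $A' \subseteq_\I A$ with $T_N(x) \to 0$ along $A'$. Using the $P$-ideal property, first split $A$ modulo $\I$ into a $b$-bounded piece and a $b$-divergent piece, at least one of which lies outside $\I$. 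In the $b$-divergent case, apply $\bx$ to the shift $A + 1$ (still outside $\I$ by translation invariance, and $b$-divergent on the corresponding part), combine the resulting subsequence limit $\varphi(c_n/b_n) \to 0$ with $T_{n-1}(x) = c_n/b_n + O(1/b_n)$, and relabel to obtain $T_N(x) \to 0$ along a suitable translate. In the $b$-bounded case, split again modulo $\I$ into $A \cap S$ and $A \setminus S$ and apply $\axuno$ or $\axdue$ respectively.

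The main obstacle I expect is the sufficiency in case $\axuno$: since on the $b$-bounded part the tail $r_N := T_N(x) - c_{N+1}/b_{N+1}$ is not negligible, convergence of $T_N(x)$ to $0 \equiv 1 \pmod{1}$ requires a ``carry'' from $c_{N+1} \approx b_{N+1} - 1$ propagating through successive digits $c_{N+k}$. One must iterate the $\axuno$-hypotheses along the shifts $A, A+1, A+2, \ldots$, controlling the residual tail at each step, and collect all the exceptional $\I$-sets generated into a single $\I$-set using the $P$-ideal property; translation invariance keeps each shifted set outside $\I$. This iterative carry analysis, refining the classical argument in \cite{DI}, is the technical heart of the proof.
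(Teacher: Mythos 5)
There is a preliminary point to make: this paper does not prove Theorem~\ref{conjecture} at all --- it is imported verbatim from the companion preprint \cite{I-torsion}, so there is no in-paper argument to compare yours against, and I can only assess your proposal on its own terms. Your framework is the right one: the identity $u_Nx\equiv T_N(x)$, the estimate $T_N(x)\in\bigl[c_{N+1}/b_{N+1},(c_{N+1}+1)/b_{N+1}\bigr)$, and the $P$-ideal lemma converting $\I$-convergence into ordinary convergence along $\subseteq_\I$-subsets are exactly the right tools. The necessity direction goes through essentially as you describe, provided you replace ``apply Theorem~\ref{DiD} to $A'$'' (which is a global statement about $x$, not about a subsequence) by a localized rerun of its proof, and arrange, via translation invariance and a second extraction, ordinary convergence along $A'-1$ as well as along $A'$, since the computations behind $\axuno$ and $\axdue$ use $u_{n-1}x$ for $n\in A'$.

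The sufficiency direction, however, has a genuine gap at its very first step. For an arbitrary $A\notin\I$ you propose to ``split $A$ modulo $\I$ into a $b$-bounded piece and a $b$-divergent piece''. This is precisely the $\I$-splitting property of \S\ref{Sec:I-spl}, and it fails in general --- already for $\I=\F in$: if the ratios $(b_n)$ enumerate the blocks $2;\,2,3;\,2,3,4;\dots$, then any $b$-bounded $B$ omits all $n$ with $b_n>M$ for some $M$, while the complement of $B$ then contains infinitely many $n$ with $b_n=M+1$ and hence is not $b$-divergent; so no splitting of $\N$ (even mod finite) exists. What the $P$-ideal property actually yields is weaker: every $A\notin\I$ contains \emph{some} non-$\I$ subset that is $b$-bounded or $b$-divergent (if each $\{n\in A:b_n\le M\}$ lies in $\I$, merge them into one member of $\I$ and delete it from $A$). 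But this weaker fact is incompatible with your architecture, which needs a single $A'\subseteq_\I A$ --- i.e.\ covering all of $A$ up to an $\I$-set --- along which $T_N\to0$; with only one homogeneous non-$\I$ piece available one must instead argue by contradiction on the bad sets $E_\eps=\{N:\|u_Nx\|\ge\eps\}$, where producing infinitely many good indices inside one homogeneous subset already suffices. The same obstruction recurs inside your ``carry'' iteration: the shifts $A+1,A+2,\dots$ need not remain $b$-bounded nor become $b$-divergent, so each step requires a fresh extraction of a homogeneous non-$\I$ piece and a careful interlocking of the conclusions of $\axuno$, $\axdue$ and $\bx$ at consecutive indices (note also that $\axdue$ alone does not make $T_n$ small when $b_{n+1}$ stays bounded, since the tail $T_{n+1}/b_{n+1}$ must itself be beaten down by iterating --- the carry analysis is not confined to the $\axuno$ case). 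This is exactly why the splitting property appears as a genuine extra hypothesis in Corollary~\ref{ThGh:May29} and why the paper describes the sufficiency of Theorem~\ref{conjecture} as requiring ``much more effort''.
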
 

A version of this theorem was given in~\cite[Theorem~2.9]{Ghosh} under the additional hypothesis that the ideal has to be analytic and with additional conditions also in $\axuno$ and $\axdue$. Unfortunately, that theorem presents a gap in its proof (similar to the gap in the proof of~\cite[Theorem~2.1]{DG1}) which is described in detail in~\cite{I-torsion}, where we also show that~\cite[Theorem~2.9]{Ghosh} can be deduced from Theorem~\ref{conjecture}.

\medskip
One of the main result of this paper is the following characterization of the topologically $\uu_\I$-torsion elements of $\T$ with $b$-bounded support (see \S\ref{suppbbounded} for a proof). 
For $\uu\in\A$, $\I$ a translation invariant free ideal of $\N$ and $x\in[0,1)$ with $S=\supp(x)$ and $S_b=\supp_b(x)$, let
\begin{itemize}
\item[$\ix$] $S+1\subseteq^\I S$;
\item[$\iix$] $S_b\subseteq_\I S$.
\end{itemize}
Moreover, we briefly write $\Ax$ for the conjunction $\ix\&\iix\&\axdue$. 

\begin{theorem}\label{Nuovo:Th}
Let $\uu\in\A$, let $\I$ be a translation invariant free $P$-ideal of $\N$ and $x\in[0,1)$. If $\supp(x)$ is $b$-bounded mod $\I$, then $\bar x\in t_\uu^\I(\T)$ if and only if $\Ax$ holds.
\end{theorem}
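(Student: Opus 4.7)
My plan is to apply Theorem~\ref{conjecture} and show that, under the hypothesis that $S:=\supp(x)$ is $b$-bounded mod $\I$, the three conditions $\axuno$, $\axdue$, $\bx$ of that theorem (quantified over all $A\in\P(\N)\setminus\I$) collapse to the conjunction $\Ax$ of $\ix$, $\iix$, and $\axdue$. Throughout, I fix a $b$-bounded set $Y\subseteq_\I S$ supplied by the hypothesis, and I rely on the translation invariance of $\I$ to shuttle sets across the shifts $\pm 1$.

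For the necessity, $\axdue$ transfers verbatim. The case $S\in\I$ makes $\ix$ and $\iix$ trivial, so I may assume $S\notin\I$, which forces $Y\notin\I$ as well. Since $Y$ is $b$-bounded, not in $\I$, and trivially $Y\subseteq^\I S$, applying $\axuno$ to $A=Y$ yields $Y+1\subseteq^\I S$ and $Y\subseteq^\I S_b$. Decomposing $S+1=(Y+1)\cup((S\setminus Y)+1)$ and using $(S\setminus Y)+1\in\I$ by translation invariance gives $(S+1)\setminus S\in\I$, which is $\ix$. Similarly, $S\setminus S_b\subseteq (S\setminus Y)\cup(Y\setminus S_b)\in\I$ yields $\iix$.

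For the sufficiency, assume $\Ax$ and let $A\in\P(\N)\setminus\I$. If $A$ is $b$-bounded with $A\cap S\in\I$, then $\axdue$ is already part of $\Ax$. If $A$ is $b$-bounded with $A\subseteq^\I S$, then $A\setminus S\in\I$ combined with $\ix$ forces $(A+1)\setminus S\subseteq (((A\cap S)+1)\setminus S)\cup((A\setminus S)+1)\in\I$, and chaining with $\iix$ produces $(A+1)\setminus S_b\in\I$. Setting $A':=\{n\in A: n+1\in S_b\}$, translation invariance turns the latter into $A\setminus A'=((A+1)\setminus S_b)-1\in\I$, so $A'\subseteq_\I A$; on $A'$ one has $c_{n+1}+1=b_{n+1}$, hence the limit in $\axuno$ is identically $1$. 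The remaining inclusion $A\subseteq^\I S_b$ follows from $A\subseteq^\I S$ and $\iix$. Finally, for $\bx$, let $A\in\P(\N)\setminus\I$ be $b$-divergent; since $A$ cannot contain an infinite $b$-bounded subset, $A\cap Y$ is finite and hence in $\I$, and $(A\cap S)\setminus(A\cap Y)\subseteq S\setminus Y\in\I$ then gives $A\cap S\in\I$. Therefore $B:=A\setminus S$ satisfies $B\subseteq_\I A$, and $c_n=0$ on $B$ makes $\lim_{n\in B}\varphi(c_n/b_n)=0$ trivial.

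The most delicate step is the construction of $A'$ in $\axuno$: the limit $\frac{c_{n+1}+1}{b_{n+1}}\to 1$ demands an $\I$-large subset of $A$ whose shift by one lands in $S_b$, and it is precisely the identity $A\setminus A'=((A+1)\setminus S_b)-1$ combined with $\ix$, $\iix$, and translation invariance that makes $A\setminus A'$ belong to $\I$. Everything else amounts to routine manipulation of the relations $\subseteq^\I$ and $\subseteq_\I$.
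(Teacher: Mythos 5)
Your proof is correct. It differs from the paper's in organization rather than in the underlying engine: both arguments ultimately rest on Theorem~\ref{conjecture}, but the paper first reduces to the case where $\supp(x)$ is genuinely $b$-bounded by replacing $x$ with $y=\sum_{n\in A}\frac{c_n}{u_n}$ for a $b$-bounded $A\subseteq_\I S$, invoking Proposition~\ref{Coro:May11} (invariance of membership in $t_\uu^\I(\T)$ under $\equiv_\I$) and Remark~\ref{Iae} (invariance of the conditions), and then applies the already-established Corollary~\ref{Nuovo:Th-old} together with Lemma~\ref{Necessity2.12*} ($\axuno\Leftrightarrow\ix\&\iix$ for $b$-bounded support). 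You instead verify each clause of Theorem~\ref{conjecture} directly in the ``$b$-bounded mod $\I$'' setting, using the witness $Y\subseteq_\I S$ to re-derive the equivalence $\axuno\Leftrightarrow\ix\&\iix$ and to check $\bx$ (which in the paper's reduced setting is vacuous for the same reason you give: a $b$-divergent $A$ meets $Y$ in a finite set). What your route buys is self-containment — no need for the $\equiv_\I$-modification machinery — at the cost of redoing, in slightly greater generality, the set-theoretic bookkeeping that the paper has already packaged into Lemma~\ref{Necessity2.12*}; your identity $A\setminus A'=((A+1)\setminus S_b)-1$ is exactly the right device for producing the $\I$-large subset on which $\frac{c_{n+1}+1}{b_{n+1}}$ is identically $1$, and all the $\I$-membership checks (in particular that $A'\notin\I$, hence infinite, so the limit is meaningful) go through.
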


From Theorem~\ref{Nuovo:Th} one can deduce Corollary~\ref{bounded} as follows, with $\I = \F in$. Let $\uu\in\A$ and $x\in[0,1)$ with $S=\supp(x)$ $b$-bounded and $S_b=\supp_b(x)$. 
It is clear that if $S$ is finite, then $\bar x\in t_\uu(\T) = t_\uu^\I(\T)$ as $\uu$ is arithmetic. 
Assume that $\bar x\in t_\uu(\T)$. By Theorem~\ref{Nuovo:Th}, $\Ax$ holds. Assume for a contradiction that $S$ is infinite. By $\ix$, $(S+1)\setminus S$ is finite and so $S$ is cofinite by~\cite[Claim~3.1]{DI}. By $\iix$, $S_b$ is cofinite, too, which is a contradiction.

\smallskip
In~\cite[Corollary 2.12]{Ghosh}, the following characterization was given of the topologically $\uu_\I$-torsion elements with $b$-bounded support: {\em if $\uu\in\A$ and $\I$ is a translation invariant analytic free $P$-ideal, then  for $x\in[0,1)$ with $S=\supp(x)$ $b$-bounded and $S_b=\supp_b(x)$, $\bar x \in t^{\I}_\uu(\T)$ is equivalent to $\ix\&\iix$.} (So the condition $\axdue$ is missing with respect to Theorem~\ref{Nuovo:Th}.)
This corollary is left without proof in~\cite{Ghosh}, where the reader finds only this comment: ``The proof follows from similar line of arguments as in~\cite[Corollary~3.9]{DG1} and so is omitted." As commented at the end of \S \ref{Sec:Hist}, the sufficiency in~\cite[Corollary~3.9]{DG1} (see Corollary~\ref{DG13.9}) is not proved in that paper. 
Hence, also in~\cite[Corollary 2.12]{Ghosh} the sufficiency remains unproved (while the necessity, i.e., $\bar x\in t_\uu^\I(\T)\Rightarrow \ix\&\iix$ when $x\in [0,1)$ has $b$-bounded support, can be deduced  from Theorem~\ref{conjecture}, as we show in Lemma~\ref{Necessity2.12*}).

\smallskip
In order to face the general case (when $\supp(x)$ need not be $b$-bounded mod $\I$), we add a third condition to $\ix$ and $\iix$; namely: 
\begin{itemize}
 \item[$\iiix$] $S-1\subseteq^\I S$.
\end{itemize} 
and we abbreviate $\Tx=\ix\&\iix\&\iiix$.
In general, $\ix\&\iix\Rightarrow\axuno$ and $\iiix\Rightarrow\axdue$, so in particular $\Tx\Rightarrow\Ax\Rightarrow\ax$ (see Proposition~\ref{primo} and the diagram below).
Furthermore, $\Tx$ ensures $\bar x\in t_\uu^\I(\T)$ regardless of whether $\supp(x)$ is $b$-bounded or not (see Theorem~\ref{in}).
As a consequence, we get that~\cite[Corollary 2.12]{Ghosh} is true under the additional hypothesis on the ideal to be {\em nested} (see Definition~\ref{nesteddef}), 
as $\ix\&\iix\Rightarrow\iiix$ (namely, $\ix\&\iix\equiv\Tx$) under this additional hypothesis.
Since $\I_\alpha$ is nested for every $\alpha\in(0,1]$, as a by-product we deduce that~\cite[Corollary~3.9]{DG1} (see Corollary~\ref{DG13.9}) holds true.

The crucial notion of a nested ideal 
was motivated by the helpful combinatorial notion of (left and right) boundary of an infinite subset of $\N$ that we anticipate in \S\ref{Sec:boundary}.
The translation invariant free ideals that are nested can be characterized as follows in terms of topologically $\uu_\I$-torsion elements of $\T$:

\begin{theorem}\label{Cor:May12prec}
The following conditions are equivalent for a translation invariant free ideal $\I$ of $\N$: 
\begin{itemize}
\item[(1)] $\I$ is nested; 
\item[(2)] for every $\uu\in\mathcal A$ the following implications hold true: 
\begin{equation}\label{Eq:May23}
\text{for every $x\in[0,1)$, $\ix\&\iix \Rightarrow\ \bar{x}\in t_\uu^\I(\T)$;}
\end{equation}
\item[(3)]~\eqref{Eq:May23} holds for every $b$-bounded $\uu\in\mathcal A$; 
\item[(4)]~\eqref{Eq:May23}  holds for $\uu =(2^n) \in\mathcal A$;
\item[(5)]  there exists a $b$-bounded $\uu \in \A$ such that~\eqref{Eq:May23} holds. 
\end{itemize}
\end{theorem}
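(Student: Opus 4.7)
The plan is to argue the cyclic chain $(1) \Rightarrow (2) \Rightarrow (3) \Rightarrow (4) \Rightarrow (5) \Rightarrow (1)$. The middle implications $(2) \Rightarrow (3) \Rightarrow (4) \Rightarrow (5)$ are direct restrictions once one observes that $\uu = (2^n) \in \A$ is $b$-bounded (its ratios are identically $2$).

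For $(1) \Rightarrow (2)$, I would invoke the defining property of a nested ideal (namely $\ix \& \iix \Rightarrow \iiix$, as flagged in the paragraph preceding the statement) to upgrade $\ix \& \iix$ to $\Tx$, and then apply Theorem~\ref{in} to deduce $\bar x \in t_\uu^\I(\T)$.

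The substantive direction is $(5) \Rightarrow (1)$. Fix a $b$-bounded $\uu \in \A$ witnessing $(5)$. To establish nestedness it suffices to verify that every $S \subseteq \N_+$ with $(S+1) \setminus S \in \I$ satisfies $(S-1) \setminus S \in \I$; the finite and cofinite cases are immediate (the left boundary is then finite), so I focus on $S$ infinite and coinfinite. Here I would manufacture $x \in [0,1)$ by setting $c_n := b_n - 1$ for $n \in S$ and $c_n := 0$ for $n \notin S$. Coinfiniteness of $S$ makes $c_n = 0 < b_n - 1$ hold for infinitely many $n$, so this is a valid canonical representation with $\supp(x) = \supp_b(x) = S$. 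Thus $\ix$ is our hypothesis and $\iix$ is trivial, so $(5)$ delivers $\bar x \in t_\uu^\I(\T)$. Since $\uu$ is $b$-bounded, so is $S = \supp(x)$, and Theorem~\ref{Nuovo:Th} gives $\Ax$; in particular $\axdue$ holds.

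The crux is to feed $\axdue$ the set $A := (S-1) \setminus S$, which has $A \cap S = \emptyset \in \I$. If $A \notin \I$, $\axdue$ produces an infinite $A' \subseteq_\I A$ along which $c_{n+1}/b_{n+1} \to 0$; but every $n \in A$ forces $n + 1 \in S$, whence $c_{n+1} = b_{n+1} - 1$ and $c_{n+1}/b_{n+1} \geq 1/2$ (using $b_{n+1} \geq 2$), a contradiction. I expect this last step --- recognising that the particular choice of $c_n$ on $S$ collapses the quantitative condition $\axdue$ into the set-theoretic assertion $(S-1) \setminus S \in \I$, with $b$-boundedness keeping the forbidden ratio uniformly bounded away from $0$ --- to be the main obstacle, and it also explains why a single $b$-bounded witness in $(5)$ already suffices to force nestedness.
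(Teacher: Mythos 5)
Your proposal is correct and follows essentially the same route as the paper: the chain of trivial restrictions, (1)$\Rightarrow$(2) via upgrading $\ix\&\iix$ to $\Tx$ and Theorem~\ref{in}, and for (5)$\Rightarrow$(1) the same witness $x=\sum_{n\in S}\frac{b_n-1}{u_n}$ with $\supp(x)=\supp_b(x)=S$ as in Proposition~\ref{notnested->}. The only cosmetic difference is that you extract the contradiction by feeding $A=(S-1)\setminus S$ directly into $\axdue$ (obtained from Theorem~\ref{Nuovo:Th}), which is precisely the argument the paper packages inside Lemma~\ref{lambda-1} and Corollary~\ref{b-boundedreal}.
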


In view of Theorem~\ref{Cor:May12prec}, the family of examples of translation invariant analytic free $P$-ideals that are not nested contained in \S\ref{Countersec}, is a family of counterexamples to~\cite[Corollary 2.12]{Ghosh}

\smallskip
In the following diagram we collect the implications that we discuss in the paper; let $\I$ be a translation invariant free $P$-ideal of $\N$, $\uu\in\A$ and $x\in[0,1)$ with $S=\supp(x)$. Some of the implications hold true in general, while others only under additional hypotheses that we attach to the relevant arrows.  
In the latter case one can always replace the condition ``$b$-bounded" by ``$b$-bounded mod $\I$".

$$\xymatrix@R3pc@C5pc{
 & & &  \Tx \ar@{=>}@/^0.5pc/[ld]\ar@{=>}[ldd] \ar@{=>}@/_0.5pc/[llld] \\
 \bar x  \in t_\uu^\I(\T) \ar@{=>}[rd]\ar@{=>}[rdd] \ar@{=>}@/^2pc/[rrru]^{\text{$S\cup(S-1)$ $b$-bounded}}   & & \Ax \ar@{=>}[ur]^{\text{$(S-1)\setminus S$ $b$-bnd}}\ar@{<=>}[ll]_{\text{$S$ $b$-bounded}} \ar@{=>}[dl]\ar@{=>}@/^2pc/[dd] \\
  & \axdue   \ar@{=>}@/_1pc/[r]_{\text{$(S-1)\setminus S$ $b$-bounded}} &\iiix  \ar@{=>}[l] &\\
  & \axuno \ar@{=>}@/_1pc/[r]_{\text{$S$ $b$-bounded}}  &\ix\&\iix \ar@{=>}[l] \ar@{=>}@/_1pc/[uuur]_{\text{$\I$ nested}} & 
}$$

All the implications in the diagram that hold under an additional assumption do not hold true in general. In fact, always $\Tx\Rightarrow\bar{x}\in t_\uu^\I(\T)$ by Theorem~\ref{in}, while Corollary~\ref{b-bounded} implies that $\Tx \Leftrightarrow \bar x\in t_\uu^\I(\T)$ when $\uu$ is $b$-bounded.  Nevertheless, the necessity of $\Tx$ for $\bar x\in t_\uu^\I(\T)$ does not hold true in general (see Example~\ref{ce}); more precisely, $\bar x\in t_\uu^\I(\T)$ does not imply any of $\ix$, $\iix$ and $\iiix$ (in particular, $\axuno\Rightarrow\ix\&\iix$ and $\axdue\Rightarrow\iiix$ fail in general). So, also the necessity of $\Ax$ does not hold true in general under the assumption that $\bar x\in t_\uu^\I(\T)$. 

As there exist non-nested translation invariant free ideals of $\N$ (see \S\ref{Countersec}), also the implication $\ix\&\iix\Rightarrow\Tx$ does not hold in general (see Proposition~\ref{notnested->}). Example~\ref{Exa:2:osserv} witnesses the failure of the implication $\Ax \Rightarrow \Tx$.

\medskip 
In \S\ref{Sec:Coro3.10:DG1} we analyze the case when $\supp(x)$ is $b$-divergent ending up with the following result that extends Corollaries~\ref{unbounded} and~\ref{C3.10}. We directly prove the sufficiency of the conditions $\Ix$ and $\IIx$ for $\bar x\in t_\uu^\I(\T)$, without making recourse to the sufficiency of the conditions in Theorem~\ref{conjecture}, the proof of which requires much more effort.

\begin{theorem}\label{Last:corollary} 
Let $\uu\in\A$, let $\I$ be a translation invariant free $P$-ideal of $\N$ and $x\in[0,1)$ with $S=\supp(x)$ $b$-divergent  mod $\I$. Then $\bar x\in t_\uu^\I(\T)$ if and only if: 
\begin{itemize}
  \item[$\Ix$]  either $S\in\I$ or if $S\not\in\I$, then there exists $D\subseteq_\I S$ such that $\lim\limits_{n\in D}\varphi\left(\frac{c_n}{b_n}\right)=0$;
  \item[$\IIx$] for every $D\in\P(\N)\setminus\I$ with $D\subseteq^\I S$ and  $D-1$ $b$-bounded, there exists $D'\subseteq_\I D$ such that $\lim\limits_{n\in D'}\frac{c_n}{b_n}=0$.
\end{itemize}
\end{theorem}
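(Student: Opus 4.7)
For the necessity, I rely on Theorem~\ref{conjecture}. Assuming $\bar x\in t_\uu^\I(\T)$, conditions $\ax$ and $\bx$ hold. For $\Ix$, the non-trivial case is $S\notin\I$: the hypothesis yields a $b$-divergent $A\subseteq_\I S$ (necessarily not in $\I$), so $\bx$ applied to $A$ gives $B\subseteq_\I A$ with $\lim_{n\in B}\varphi(c_n/b_n)=0$, and $B\subseteq_\I S$ by transitivity, making $D:=B$ the required set.

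For $\IIx$, the idea is to reduce to $\axdue$. Given $D\in\P(\N)\setminus\I$ with $D\subseteq^\I S$ and $D-1$ $b$-bounded by some $M$, I claim $D\cap(S+1)\in\I$. Indeed, $(S\setminus A)+1\in\I$ by translation invariance and handles the $d\in D$ with $d-1\in S\setminus A$; while the remaining $d\in D$ with $d-1\in A$ satisfy $b_{d-1}\leq M$, hence $d-1\in\{a\in A:b_a\leq M\}$, which is finite by $b$-divergence of $A$. Consequently $A':=D-1$ satisfies $A'\cap S=(D\cap(S+1))-1\in\I$, is not in $\I$ (translation invariance), and is $b$-bounded; $\axdue$ then gives $A''\subseteq_\I A'$ with $\lim_{n\in A''}c_{n+1}/b_{n+1}=0$, and $D':=A''+1\subseteq_\I D$ satisfies $\lim_{d\in D'}c_d/b_d=0$.

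For sufficiency, set $r_n:=\sum_{k>n}c_k/(b_{n+1}\cdots b_k)\in[0,1)$, so that $u_n\bar x=\varphi(r_n)$ and $r_n=(c_{n+1}+r_{n+1})/b_{n+1}$. Fix $\eps\in(0,1/2]$; to prove $E_\eps:=\{n\in\N:r_n\in[\eps,1-\eps]\}\in\I$, split $E_\eps=(E_\eps\cap(S-1))\cup(E_\eps\setminus(S-1))$. I handle $E_\eps\cap(S-1)$ using $\Ix$ (the case $S\in\I$ is trivial since then $E_\eps\cap(S-1)\subseteq S-1\in\I$): pick $D_0\subseteq_\I S$ from $\Ix$ and intersect with the $b$-divergent $A\subseteq_\I S$ to get $D:=A\cap D_0\subseteq_\I S$, on which both $b_n\to\infty$ and $\varphi(c_n/b_n)\to 0$ in $\T$. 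Writing $r_{d-1}=(c_d+r_d)/b_d$ with $r_d/b_d<1/b_d\to 0$ for $d\in D$, I obtain $\varphi(r_{d-1})\to 0$ in $\T$, so $\{n\in D-1:n\in E_\eps\}$ is finite while $E_\eps\cap(S-1)\setminus(D-1)\subseteq(S\setminus D)-1\in\I$ by translation invariance.

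The main obstacle is $E_\eps\setminus(S-1)$. For $n$ in this set I iterate the recursion: since $c_{n+1}=\ldots=c_{p(n)-1}=0$, where $p(n):=\min\{m\in S:m>n\}$, one gets $r_n=(c_m+r_m)/(b_{n+1}\cdots b_m)$ with $m:=p(n)$, and hence $r_n<1/(b_{n+1}\cdots b_{m-1})\leq 1/2^{m-n-1}$. So $r_n\geq\eps$ forces $p(n)-n\leq K$, where $K:=\lfloor\log_2(1/\eps)\rfloor+1$, giving $E_\eps\setminus(S-1)\subseteq\bigcup_{j=2}^K(S-j)$. For fixed $j\in\{2,\ldots,K\}$, the part with $m=n+j\in S\setminus A$ lies in $(S\setminus A)-j\in\I$; for the part with $m\in A$, the inequality $r_n\geq\eps$ becomes
\[
\frac{c_m}{b_m}\;\geq\;\eps\,(b_{n+1}\cdots b_{m-1})-\frac{1}{b_m}\;\geq\;2\eps-\frac{1}{b_m},
\]
which exceeds $\eps$ for every $m\in A$ with $b_m\geq 4/\eps$, i.e., for all but finitely many $m\in A$. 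I then introduce $D_j:=\{m\in A:b_{m-j+1}\cdots b_{m-1}\leq 1/\eps\}\subseteq S$; since $b_{m-1}\leq 1/\eps$ on $D_j$, $D_j-1$ is $b$-bounded, and $\IIx$ applied to $D_j$ (in case $D_j\notin\I$) yields $D_j'\subseteq_\I D_j$ with $c_m/b_m\to 0$ on $D_j'$, incompatible with the lower bound $c_m/b_m>\eps$ established above. Hence all but finitely many of the relevant $m$ fall into $D_j\setminus D_j'\in\I$; translating by $-j$ and taking the finite union over $j\in\{2,\ldots,K\}$ completes the proof that $E_\eps\setminus(S-1)\in\I$, and thus that $\bar x\in t_\uu^\I(\T)$.
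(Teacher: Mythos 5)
Your proof is correct and follows essentially the same route as the paper's: necessity is obtained from Theorem~\ref{conjecture} by translating $\bx$ into $\Ix$ and $\axdue$ into $\IIx$ (cf.\ Proposition~\ref{III}), and sufficiency is proved directly by splitting the exceptional set according to whether $n+1\in S$, handling the adjacent part with $\Ix$ and the rest by bounding the gap to the next support element and applying $\IIx$ to a subset of $S$ whose shift is $b$-bounded (cf.\ Proposition~\ref{salvataggio}). The only organizational difference is that the paper first reduces to the case where $\supp(x)$ is exactly $b$-divergent via Proposition~\ref{Coro:May11}, whereas you carry the auxiliary $b$-divergent set $A\subseteq_\I S$ through the argument inline.
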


Since $S=\supp(x)$ is $b$-divergent  mod $\I$,  the second part of $\Ix$ is equivalent to: there exists $D\subseteq_\I\N$ with $\lim\limits_{n\in D}\varphi\left(\frac{c_n}{b_n}\right)=0$.

\smallskip
The final \S\ref{Sec:I-spl} is focused on a result that simultaneously generalizes Theorem \ref{Last:corollary} and Theorem \ref{Nuovo:Th}.
More precisely, inspired by \cite[Definition 3.1]{Ghosh} (which was, in turn, inspired by the splitting property of a sequence from~\cite{DG1,DI}), given $\uu\in\A$ and a free ideal $\I$ we say that a subset $A$ of $\N$ has the \emph{$\I$-splitting property} if it has the form $A=B\sqcup D$, with $B$ $b$-bounded mod $\I$ and $D$ $b$-bounded mod $\I$. In Theorem \ref{Lemma1}, for $\uu\in \A$ and a translation invariant free $P$-ideal $\I$ of $\N$, we describe the elements $\bar x$ of the subgroup $t_\uu^\I(\T)$ such that $\supp(x)$ has the $\I$-splitting property.

\subsection{Notation and terminology}\label{NT}

As usual, denote by $\N$ the set of natural numbers and by $\P(\N)$ its power set. For a subset $A$ of $\N$, we denote $A^*=\N\setminus A$, and for an ideal $\I$ of $\N$, $\I^*=\{A^*:A\in\I\}$.

Moreover, let $\N_+=\{n\in\N:n>0\}$, while $\Z$, $\Q$ and $\R$ are respectively the group of integers, rationals and reals. 
We denote as usual $[a,b]$ and $(a,b)$ a closed and an open interval of $\R$. Moreover, let $[a,b]_\N=[a,b]\cap\N$ and $(a,b)_\N=(a,b)\cap\N$.
For $x\in\R$, we denote by $\{x\}$ its fractional part and by $\lfloor x\rfloor$ its integer part; moreover, for $x,y\in\R$, we write $x\equiv_\Z y$ to denote that $x-y\in\Z$. 
Let $\T=\R/\Z$ and let $\| -\|\colon\T\to[0,1/2]$ be the norm on $\T$ defined by $\|x+\Z\|=\min\{|x-n|:n\in\N\}$ for every $x\in\R$. 
For $m\in \N_+$, $\Z(m) \cong \Z/m\Z$ denotes the cyclic group of order $m$, and for $p$ a prime let $\Z(p^\infty)$ denote the Pr\"ufer group.

\smallskip
Recall that a subset $Y$ of a metric space $X$ is \emph{analytic} if it is a continuous image of a Borel set. So, we call an ideal $\I$ of $\N$ \emph{analytic} if it is an analytic subset of the Boolean ring $(\P(\N),\bigtriangleup,\cap)$ identified with $\Z(2)^\N$ and endowed with the product topology of the discrete topology of $\Z(2)$.
 

As proved in~\cite{BCsurvey}, every translation invariant free $P$-ideal of $\N$ is \emph{unboundedly convex}, namely, $\I$ contains an infinite set $A=\bigcup_{n\in\N}[a_n,b_n]_\N$, where $a_{n+1}>b_n+1$, such that $\sup_{n\in\N}(b_n-a_n+1)=\infty$; roughly, $A$ has arbitrarily large intervals. The term {unboundedly convex} was coined in~\cite{BCpaper}, while in~\cite{BCsurvey} this is equivalent to the negation of the property called $\stellaa$ there.
 
\smallskip
 Fixed $\uu\in\A$, we use the following easy estimations. 
First, \begin{equation}\label{finiteestimation}
\sum_{i=l}^{r} \frac{b_i-1}{u_i}= \frac{1}{u_{l-1}}-\frac{1}{u_{r}}\quad\text{for every $l,r\in\N_+$, with $l\leq r$,}
\end{equation}
as $\sum_{i=l}^{r} \frac{b_i-1}{u_i}=\sum_{i=l}^{r}\left( \frac{1}{u_{i-1}}-\frac{1}{u_i}\right)= \frac{1}{u_{l-1}}-\frac{1}{u_{r}}$.
Similarly, 
\begin{equation}\label{NEW:Lemma}
\sum_{n=k+1}^\infty\frac{c_n}{u_n}\leq \frac{1}{u_k}\quad\text{for every $k\in\N$},
\end{equation}
as $\sum_{n=k+1}^\infty\frac{c_n}{u_n}\leq \sum_{n=k+1}^\infty\frac{b_n-1}{u_n}=\sum_{n=k}^\infty\left(\frac{1}{u_n}-\frac{1}{u_{n+1}}\right)=\frac{1}{u_k}.$

\section{General relations}

In the next lemma we see that $\axdue$ is equivalent to a simpler condition, that we use in the paper without giving always reference to the lemma.

\begin{lemma}\label{ax2p}
Let $\uu\in\A$, let $\I$ be a free ideal of $\N$ and $x\in[0,1)$ with $S=\supp(x)$. Then $\axdue$ holds if and only if:
\begin{itemize}
\item[$\dageq$] for every $A\in\P(\N)\setminus\I$ $b$-bounded, if $A\cap S \in \I$, then $(A+1)\cap S\in \I$.
\end{itemize}
\end{lemma}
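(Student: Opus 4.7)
My plan is to prove the two directions separately, using translation invariance of $\I$ (adopted throughout the paper) to shift between $(A+1)\cap S$ and its backward shift $B:=\{n\in A:c_{n+1}\neq 0\}=A\cap(S-1)$, which satisfies $B+1=(A+1)\cap S$.

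$(\dageq)\Rightarrow(\axdue)$: Fix $A\in\P(\N)\setminus\I$ $b$-bounded with $A\cap S\in\I$. By $\dageq$, $(A+1)\cap S\in\I$, hence $B\in\I$ by translation invariance. Setting $A':=A\setminus B=\{n\in A:c_{n+1}=0\}$, one has $A\setminus A'=B\in\I$, so $A'\subseteq_\I A$, while $c_{n+1}/b_{n+1}=0$ on $A'$; thus $\axdue$ holds with the limit identically $0$.

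$(\axdue)\Rightarrow(\dageq)$: Fix $A$ as above and, by $\axdue$, choose $A'\subseteq_\I A$ with $c_{n+1}/b_{n+1}\to 0$ along $A'$ (in the usual sequential sense). It suffices to show $B\cap A'\in\I$: then $B=(B\setminus A')\cup(B\cap A')\in\I$ since $B\setminus A'\subseteq A\setminus A'\in\I$, and translation invariance converts this into $(A+1)\cap S\in\I$. For each $n\in B\cap A'$ one has $c_{n+1}\geq 1$, so $1/b_{n+1}\leq c_{n+1}/b_{n+1}$; the limit condition along $A'$ then forces $b_{n+1}\to\infty$ along $B\cap A'$.

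The main obstacle is the final step of converting ``$b_{n+1}\to\infty$ along $B\cap A'$'' into ``$B\cap A'\in\I$''. Plain $b$-boundedness of $A$ controls $(b_n)_{n\in A}$ but not $(b_{n+1})_{n\in A}$. My plan to close the gap is to use the structural input available in the settings where the lemma is invoked (most notably Theorem~\ref{Nuovo:Th}), where $S=\supp(x)$ is $b$-bounded mod $\I$: since $n+1\in S$ for every $n\in B$, the $b$-boundedness mod $\I$ of $S$ forces $b_{n+1}$ to be bounded on $B\cap A'$ outside an $\I$-small set, and together with $b_{n+1}\to\infty$ this pushes $B\cap A'$ itself into $\I$. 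Without such structural input on $S$, I would instead partition $A=\bigcup_{M\in\N}\{n\in A:b_{n+1}\leq M\}$, apply the easy bounded-$b_{n+1}$ case on each piece (where $c_{n+1}/b_{n+1}\to 0$ and $c_{n+1}\in\N$ yields $c_{n+1}=0$ eventually), and glue the countably many resulting $\I$-small sets using the $P$-ideal property of $\I$.
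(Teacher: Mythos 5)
Your argument for $\dageq\Rightarrow\axdue$ is correct and is exactly the paper's: take $A'=A\setminus(S-1)$, on which $c_{n+1}=0$ identically (both you and the paper quietly use translation invariance of $\I$ here, which the lemma's hypotheses omit but which holds throughout the paper). For the converse you also follow the paper's route: choose $A'$ from $\axdue$, observe that on $A'':=A'\cap(S-1)$ one has $c_{n+1}\geq 1$, hence $1/b_{n+1}\leq c_{n+1}/b_{n+1}\to 0$, i.e.\ $b_{n+1}\to\infty$ along $A''$. The obstacle you isolate --- that $b$-boundedness of $A$ bounds $(b_n)_{n\in A}$ but says nothing about $(b_{n+1})_{n\in A}$ --- is precisely the point at which the paper's proof makes an unjustified leap: it declares $\lim_{n\in A''}b_{n+1}=\infty$ to be ``a contradiction to the fact that $A''$ is $b$-bounded'', which under the paper's definition of $b$-bounded is a non sequitur. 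Neither of your proposed repairs closes this for the lemma as stated: the first imports a hypothesis ($S$ $b$-bounded mod $\I$) not present in the statement, and the second cannot work because the lemma does not assume $\I$ is a $P$-ideal, and even for $P$-ideals the pieces $\{n\in A:b_{n+1}\leq M\}$ can all be finite while their union $B\cap A'$ lies outside $\I$, so there is nothing to glue.

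In fact the direction $\axdue\Rightarrow\dageq$ fails with the definitions as printed. Take $\I=\F in$, let $\uu$ have $b_n=2$ for $n$ even and $b_n=2^n$ for $n$ odd, and let $x=\sum_{m\ \mathrm{odd}}1/u_m$, so $S=\supp(x)$ is the set of odd numbers. Every infinite $b$-bounded $A$ is almost contained in the evens and meets $S$ finitely, and $c_{n+1}/b_{n+1}=2^{-(n+1)}\to 0$ on $A\cap 2\N$, so $\axdue$ holds (indeed $\bar x\in t_\uu(\T)$ by Corollary~\ref{unbounded}); yet for $A=2\N_+$ the set $(A+1)\cap S$ is infinite, so $\dageq$ fails. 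The equivalence becomes true if one requires $(b_{n+1})_{n\in A}$ --- equivalently $A+1$ --- to be bounded, which is what the contradiction actually needs; the mismatch looks like a casualty of the index shift between the abstract's convention $b_n=u_{n+1}/u_n$ and the body's $b_n=u_n/u_{n-1}$, and it propagates to Lemma~\ref{lambda-1}, whose proof relies on the same step. So your refusal to assert the final step was the right call: the gap you found is a gap in the lemma itself, not merely in your proof.
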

\begin{proof}
To prove that $\dageq\Rightarrow\axdue$, assume that $A\in\P(\N)\setminus\I$ is $b$-bounded and $A\cap S \in \I$. Then by $\dageq$, $(A+1)\cap S\in \I$, namely, $A\cap (S-1)\in\I$.
Let $A'=A\setminus(S-1)\subseteq_\I A$.
Then $c_{n+1}=0$ for every $n\in A'$, and so $\lim\limits_{n\in A'}\frac{c_{n+1}}{b_{n+1}}=0$. 
Hence, $\axdue$ holds.

To prove the converse implication $\axdue\Rightarrow\dageq$, let $A\in\P(\N)\setminus\I$ be $b$-bounded with $A \cap S \in \I$. Then $\axdue$ provides a subset $A'\subseteq_\I A$ such that $\lim\limits_{n\in A'}\frac{c_{n+1}}{b_{n+1}}=0$. We prove that $A'':= A' \cap (S-1)$ is finite. 
Indeed, assume by contradiction that $A''$ is infinite. Then $\lim\limits_{n\in A''}\frac{c_{n+1}}{b_{n+1}}=0 $, since $A''  \subseteq A'$.
As $A'' \subseteq S -1$, $c_{n+1} \ne0$ for every $n \in A''$. This gives 
$$\lim\limits_{n\in A''}\frac{1}{b_{n+1}}\leq \lim\limits_{n\in A''}\frac{c_{n+1}}{b_{n+1}}=0. $$ 
Thus $\lim\limits_{n\in A''} b_{n+1}=\infty$, a contradiction to the fact that $A''$ is $b$-bounded, as a subset of $A$. This establishes the finiteness of $A''$. 
Since, $(A'+1)\cap S=A''+1$ is finite, and so in $\I$, and since moreover $A'+1\subseteq_\I A+1$, we conclude that $(A+1)\cap S\in\I$, namely, $\dageq$ holds.
\end{proof}

\begin{remark}\label{ax1p}
Let $\uu\in\A$, let $\I$ be a free ideal of $\N$ and $x\in[0,1)$ with $S=\supp(x)$.
Analogously to $\dageq$, one can introduce the following property related to $\axuno$:
\begin{itemize}
\item[$\stareq$] for every $A\in\P(\N)\setminus\I$ $b$-bounded, if $A \subseteq^{\I} S$, then $A+1\subseteq^{\I} S$, $A \subseteq^{\I} S_b$ and $A \cap (S_b-1) \subseteq_\I A$.
\end{itemize}

\smallskip
(a) First we verify that $\stareq\Rightarrow \axuno$.
Assume that $A\in\P(\N)\setminus\I$ is $b$-bounded and $A\subseteq^{\I} S$. Then by $\stareq$, $A \subseteq^{\I} S_b$, $A+1\subseteq^{\I} S$ and $A':=A\cap (S_b-1)\subseteq_\I A$.
Since $A' + 1 \subseteq S_b$, it follows that $c_{n+1}=b_{n+1}-1$ for every $n \in A'$, and so $\lim\limits_{n\in A'}\frac{c_{n+1}+1}{b_{n+1}}=1$. 

\smallskip
(b) Now we see that for $\I=\F in$, $\bar x\in t_\uu(\T)\not\Rightarrow\stareq$, so in particular $\stareq$ is strictly stronger than $\axuno$. 
In order to define the sequence $\uu\in\A$, we fix a partition of $\N_+ = B\sqcup D$, with $D=2\N_+$ and $B=2\N+1$, so that $B + 1= D$. The sequence $\uu\in\A$ is completely determined 
by the ratios $b_n$, so let $b_n=2$ for $n \in B$ and $b_n=n$ for $n \in D$. Then $D$ is $b$-divergent and $B$ is $b$-bounded. 

Let $x=\sum_{n\in 2\N_+}\frac{b_n-2}{u_n} + \sum_{n\in 2\N_+ +1}\frac{b_n-1}{u_n}$. Then  $\supp(x) = \N_+\setminus \{1\}$ and $\supp_b(x) = B \setminus \{1\}$. 
Let $A$ be an infinite $b$-bounded subset of $\N$. Then $A\subseteq^* B$, and so also $A\subseteq^* \supp_b(x)$. Moreover, $A+1\subseteq^* D$ and $\lim\limits_{n\in A+1}\frac{c_{n}+1}{b_n}=\lim\limits_{n\in A+1}\frac{b_n-1}{b_n}=\lim\limits_{n\in A+1}\frac{n-1}{n}=1$. This shows that $\axuno$ holds.
On the other hand, $(A+1)\cap \supp_b(x)$ is finite, so $\stareq$ does not hold.

Now $\axdue$ is trivially satisfied and it easy to see that also $\bx$ holds, as if $E$ is a $b$-divergent subset of $\N$, then $E\subseteq^*D$ and $\lim\limits_{n\in S}\frac{c_n}{b_n}=\lim\limits_{n\in D}\frac{n-2}{n}=1$. Hence, $\bar x\in t_\uu(\T)$ by Theorem~\ref{conjecture}.
\end{remark}
 
Next, we clarify the impact of $\Tx$ and $\Ax$ on $\ax$: 

\begin{proposition}\label{primo}{\rm [$\Tx\Rightarrow \Ax \Rightarrow \ax$]} 
Let $\uu\in\A$, let $\I$ be a free ideal of $\N$ and $x\in[0,1)$. Then:
$$\ix\&\iix\Rightarrow \axuno\quad \text{and}\quad \iiix \Rightarrow \axdue.$$
Consequently, $\Tx \Rightarrow \Ax\Rightarrow \ax$.
\end{proposition}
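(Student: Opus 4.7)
The plan is to prove the two headline implications $\ix\&\iix\Rightarrow\axuno$ and $\iiix\Rightarrow\axdue$ separately, after which the cumulative claim $\Tx\Rightarrow\Ax\Rightarrow\ax$ is just unpacking the conjunctions. Translation invariance of $\I$ (implicit in the definitions of $\ix$, $\iix$, $\iiix$) will be the recurring tool, letting me trade a set in $\I$ for its shift by $\pm 1$.

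For $\ix\&\iix\Rightarrow\axuno$, I fix a $b$-bounded $A\in\P(\N)\setminus\I$ with $A\subseteq^{\I}S$ and dispatch the three required conclusions in turn. First, to get $A+1\subseteq^{\I}S$ I would decompose $(A+1)\setminus S$ into the union of $(A\setminus S)+1$ and $(S+1)\setminus S$; the former lies in $\I$ by translation invariance applied to $A\setminus S\in\I$, the latter by $\ix$. Next, $A\setminus S_b\subseteq (A\setminus S)\cup(S\setminus S_b)\in\I$ using the hypothesis and $\iix$, giving $A\subseteq^{\I}S_b$. For the limit part, I propose $A':=A\cap(S_b-1)$: on $A'$ one has $n+1\in S_b$, hence $c_{n+1}=b_{n+1}-1$, so $\tfrac{c_{n+1}+1}{b_{n+1}}\equiv 1$. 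Checking $A\setminus A'\in\I$ is the delicate point and proceeds by shifting: $(A\setminus(S_b-1))+1\subseteq(A+1)\setminus S_b\subseteq((A+1)\setminus S)\cup(S\setminus S_b)\in\I$, using the first conclusion and $\iix$, and then translation invariance returns the statement without the shift.

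For $\iiix\Rightarrow\axdue$, I would invoke the equivalent formulation $\dageq$ supplied by Lemma~\ref{ax2p}, which reduces the task to showing $(A+1)\cap S\in\I$ whenever $A$ is $b$-bounded and $A\cap S\in\I$. Each $m\in(A+1)\cap S$ satisfies $m-1\in A$ and $m\in S$; dichotomising on whether $m-1\in S$ produces the cover
$$(A+1)\cap S\ \subseteq\ ((A\cap S)+1)\ \cup\ \bigl(((S-1)\setminus S)+1\bigr),$$
whose summands lie in $\I$ by translation invariance applied to $A\cap S\in\I$ and to $(S-1)\setminus S\in\I$ (the latter being $\iiix$). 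The ``consequently'' part is then immediate: $\Tx$ includes $\iiix$, hence $\axdue$ by the second implication, so $\Tx\Rightarrow\ix\&\iix\&\axdue=\Ax$; and $\Ax$ includes $\ix\&\iix$, hence $\axuno$ by the first implication, which combined with the $\axdue$ already present in $\Ax$ yields $\ax$.

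The main obstacle I anticipate is precisely the ``shift back'' step in part (c) of the first implication: the natural source of $\I$-smallness one constructs lives on $A+1$, while the required conclusion lives on $A$, so one has to lean on translation invariance and be careful that no residual piece is lost in the conversion. Everything else is a straightforward bookkeeping of inclusions modulo $\I$.
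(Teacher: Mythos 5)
Your proposal is correct and follows essentially the same route as the paper: the same witness $A'=A\cap(S_b-1)$ for the limit clause of $\axuno$ (the paper packages this via the condition $\stareq$ of Remark~\ref{ax1p}, but the underlying inclusions mod $\I$ and the translation-invariance shifts are identical), and the same reduction of $\axdue$ to $\dageq$ via Lemma~\ref{ax2p} with the same covering of $(A+1)\cap S$. No gaps.
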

\begin{proof} 
Let $S=\supp(x)$ and $S_b=\supp_b(x)$.
Assume $\ix\&\iix$ and that $A\in\P(\N)\setminus\I$ is $b$-bounded with $A\subseteq^\I S$. By hypotheses, $S+1 \subseteq^{\I} S \subseteq^{\I} S_b$. This gives also $S+1  \subseteq^{\I} S_b$ and $S-1 \subseteq^{\I} S_b-1$.
As $A \subseteq^{\I} S$, then $A \subseteq^{\I} S_b$ and $A+1 \subseteq^{\I} S+1$, so $A+1 \subseteq^{\I} S$ by our assumption $S+1  \subseteq^{\I} S$. From $A+1 \subseteq^{\I} S$ and $S-1 \subseteq^{\I} S_b-1$, and  taking into account that $\I$ is translation invariant,  we deduce that $A\subseteq^\I S-1 \subseteq^\I S_b-1$, so $A \subseteq^\I S_b-1$.  Hence, $A\cap(S_b-1)\subseteq_\I A$, namely, $\stareq$ holds, which implies $\axuno$ by Remark~\ref{ax1p}.

\smallskip 
Now atssume  that $\iiix$ holds and that $A\in\P(\N)\setminus\I$ is $b$-bounded with $A\cap S\in\I$. Then also $A\cap (S-1)\in\I$, since $S-1\subseteq^\I S$ by $\iiix$.
Therefore, $(A+1)\cap S=(A\cap (S-1))+1\in\I$ as well, so $\dageq$ holds, which is equivalent to $\axdue$ by Lemma~\ref{ax2p}.
\end{proof}

The last statement of Proposition~\ref{primo} follows from Theorem~\ref{in} and Theorem~\ref{conjecture}. Nevertheless, we prefer to give a direct proof of the more precise version, relating $\ix\&\iix$ and $\axuno$ (resp., $\iiix$ and $\axdue$). On the other hand, the new condition $\iiix$ can be strictly stronger than $\axdue$, as witnessed by Example~\ref{ce}.

\section{Sufficient conditions for $\bar x\in t_\uu^\I(\T)$}
 
We start this section by recalling that $\supp(x)\in\I$ is a basic sufficient condition to get $\bar x\in t_\uu^\I(\T)$:

\begin{lemma}[{\cite[Lemma 2.2]{Ghosh}}]\label{Lemma2.2}
Let $\uu\in\A$, let $\I$ be a translation invariant proper free ideal of $\N$ and $x\in[0,1)$. If $\supp(x)\in\I$, then $\bar x\in t_\uu^\I(\T)$.
\end{lemma}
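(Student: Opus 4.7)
\medskip

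\noindent\textbf{Proof plan.} My plan is to show directly that, for every $\varepsilon>0$, the set $B_\varepsilon:=\{n\in\N:\|u_n x\|\geq \varepsilon\}$ lies in $\I$, which is equivalent to $u_nx\overset{\I}{\longrightarrow}0$ (it suffices to treat basic neighborhoods of $0$ in $\T$).  Writing $x=\sum_{k=1}^\infty c_k/u_k$ with $c_k=0$ for $k\notin S:=\supp(x)$, I would split
\[
u_n x\;=\;\sum_{k\leq n}\frac{u_n c_k}{u_k}\;+\;\sum_{k>n}\frac{u_n c_k}{u_k},
\]
and observe that the first sum is an integer because $u_k\mid u_n$ for $k\leq n$.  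Hence $u_n x\equiv t_n\pmod{\Z}$, where $t_n:=\sum_{k>n}u_n c_k/u_k$ satisfies $0\leq t_n\leq 1$ by~\eqref{NEW:Lemma}.  Consequently $\|u_n x\|\leq t_n$ as soon as $t_n\leq 1/2$.

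The key estimate is the following. Since $\uu$ is strictly increasing with $u_k\mid u_{k+1}$, each ratio $b_k\geq 2$, so $u_{n+N}/u_n\geq 2^N$.  If $n\in\N$ is such that
\[
S\cap\{n+1,n+2,\ldots,n+N\}\;=\;\emptyset,
\]
then $c_{n+1}=\cdots=c_{n+N}=0$, and another application of~\eqref{NEW:Lemma} (shifted to index $n+N$) gives
\[
t_n\;=\;\sum_{k>n+N}\frac{u_n c_k}{u_k}\;\leq\;\frac{u_n}{u_{n+N}}\;\leq\;2^{-N}.
\]
Therefore $\|u_n x\|\leq 2^{-N}$ for every such $n$.

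To finish, given $\varepsilon>0$ I would choose $N$ with $2^{-N}<\varepsilon$.  The contrapositive of the key estimate yields
\[
B_\varepsilon\;\subseteq\;\{n\in\N:S\cap\{n+1,\ldots,n+N\}\neq\emptyset\}\;=\;\bigcup_{k=1}^{N}\bigl((S-k)\cap\N\bigr).
\]
By hypothesis $S\in\I$, and translation invariance of $\I$ gives $(S-k)\cap\N\in\I$ for every $k\in\{1,\ldots,N\}$; since $\I$ is closed under finite unions, $B_\varepsilon\in\I$.  This establishes $\bar x\in t_\uu^\I(\T)$.

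There is no substantial obstacle: the two ingredients are the routine geometric-series-type bound $u_n/u_{n+N}\leq 2^{-N}$ (available because $\uu$ is arithmetic) and the translation invariance of $\I$, which is exactly what lets the ``bad'' indices $n$ for the neighborhood $\{y:\|y\|<\varepsilon\}$ be absorbed into finitely many translates of $S\in\I$.
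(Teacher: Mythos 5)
Your proof is correct. The paper itself gives no proof of this lemma, citing it as \cite[Lemma 2.2]{Ghosh}; your argument is the standard one and is complete: the reduction of $u_nx$ modulo $\Z$ to the tail $t_n$, the bound $t_n\le u_n/u_{n+N}\le 2^{-N}$ via~\eqref{NEW:Lemma} when $S$ misses $\{n+1,\dots,n+N\}$, and the absorption of $B_\varepsilon$ into the finite union $\bigcup_{k=1}^N\bigl((S-k)\cap\N\bigr)\in\I$ by translation invariance all check out.
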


\subsection{Two boundary operators in $\P(\N)$}\label{Sec:boundary}

There is a quite natural way to define the left and the right boundary of a subset $S$ of $\N$. In case $S = [a,b]_\N$ is an interval, we simply put 
$$\lambda(S) := \{a\} \quad \text{and}\quad \rho(S):= \{b\}.$$
Obviously, $\lambda(S) = S \setminus (S+1)$ and $\rho(S) = S \setminus (S-1)$ when $|S|>1$ is not degenerate. This remains true also when $S= \{a\}$ is singleton (degenerate), as $\lambda(S) = \rho(S) =S$, while $S \setminus (S+1) = S \setminus (S-1) = S$ in this case.
It can be helpful to introduce also the set $i(S)$ of all {\em isolated points} of $S$, namely,
$$i(S) = \{s\in S: s\not \in (S-1) \cup (S+1)\} = S \setminus ((S-1) \cup (S+1)) = (S \setminus (S-1)) \cap (S \setminus (S+1)).$$

In the general case we simply use the equalities $\lambda(S) = S \setminus (S+1)$ and $\rho(S) = S \setminus (S-1)$
valid for intervals $S = [a,b]_\N$, and define the left and the right boundary of an arbitrary $\emptyset \ne S \in \P(\N)$ in the same way: 
$$\lambda(S) := S \setminus (S+1)\quad\text{and}\quad\rho(S):= S \setminus (S-1).$$
Then $i(S) = \lambda(S) \cap \rho(S)$, i.e., the isolated points are exactly those points of $S$ that are both  left and right boundary.

The next useful claim and its proof show that this choice is equally natural: 

\begin{claim}\label{nesteddef*} For $\emptyset \ne S \in \P(\N)$, the following conditions are equivalent: 
\begin{itemize}
  \item[(1)] $S$ is either 
   finite or cofinite; 
  \item[(2)] $\lambda(S)$ is finite; 
  \item[(3)] $\rho(S)$ is finite. 
\end{itemize}
\end{claim}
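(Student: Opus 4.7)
The plan is to exploit the natural decomposition of $S$ into its maximal intervals in $\N$: write $S = \bigsqcup_{i\in J} I_i$ where each $I_i$ is a maximal subinterval of $\N$ contained in $S$, so that consecutive $I_i$ are separated by at least one element of $S^{*}$. Under this decomposition, $\lambda(S) = \{\min I_i : i\in J\}$ consists of the left endpoints of the $I_i$, while $\rho(S) = \{\max I_i : i \in J,\ I_i \text{ bounded}\}$ since an unbounded $I_i$ cannot contribute to $\rho(S) = S \setminus (S-1)$. Moreover, at most one $I_i$ can be unbounded in $\N$, and if such an $I_i$ exists it must be of the form $[a,\infty)_\N$.

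For $(1) \Rightarrow (2)$ and $(1) \Rightarrow (3)$: if $S$ is finite then $\lambda(S), \rho(S) \subseteq S$ are finite. If $S$ is cofinite, then directly from the definitions one has $\lambda(S) \subseteq (S^{*}+1) \cup \{0\}$ and $\rho(S) \subseteq (S^{*}-1) \cap \N$, both finite since $S^{*}$ is finite.

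For $(2) \Rightarrow (1)$: $|\lambda(S)|$ equals the cardinality of $J$, so the hypothesis forces $J$ to be finite. If every $I_i$ is bounded, $S$ is a finite union of finite intervals, hence finite. Otherwise exactly one $I_i$ equals $[a,\infty)_\N$ for some $a$, and $S$ differs from $\N$ only by the finitely many bounded gaps among the $I_j$'s (plus possibly a bounded initial segment), hence $S$ is cofinite. For $(3) \Rightarrow (1)$: if $J$ were infinite, then, since at most one $I_i$ can be unbounded, infinitely many $I_i$ would be bounded and each would contribute its maximum to $\rho(S)$, contradicting the hypothesis that $\rho(S)$ is finite. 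Hence $J$ is finite, and the conclusion follows exactly as in the previous paragraph.

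The only delicate point—really a pitfall to avoid rather than a genuine obstacle—is the asymmetry between $\lambda$ and $\rho$: $\lambda(S)$ picks up the left endpoint of every maximal interval (including an unbounded tail), whereas $\rho(S)$ ignores an unbounded tail entirely. This asymmetry matters only in $(3) \Rightarrow (1)$, where the step of ruling out infinitely many maximal intervals requires the observation that all but at most one must be bounded. Once this is handled, the common reduction ``finitely many maximal intervals $\Rightarrow$ $S$ is finite or cofinite'' closes both implications.
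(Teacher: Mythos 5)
Your proof is correct and takes essentially the same approach as the paper: decomposing $S$ into its maximal intervals and identifying $\lambda(S)$ and $\rho(S)$ with the sets of left and right endpoints. The only difference is that the paper argues the nontrivial implications by contrapositive (assuming $S$ neither finite nor cofinite, so that all maximal intervals are bounded and infinite in number), which sidesteps the unbounded-tail case that you handle explicitly and correctly.
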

\begin{proof} 
The implication (1)$\Rightarrow$(2)\&(3) is obvious.  

Now assume that $S$ is neither finite nor cofinite. Then $S$ can be written as $S=\bigcup_{n \in \N}[l_n,r_n]_\N$ where 
\begin{equation}\label{(*)}
l_n\leq r_n<l_{n+1}-1\quad \text{for every}\ n\in \N. 
\end{equation}
Obviously, both $S \setminus (S+1) = \{l_n: n \in \N \}$ and $S \setminus (S-1) =  \{r_n: n \in \N\}$ are infinite now. This proves the remaining implications. 
\end{proof}

In the sequel we often deal with infinite non-cofinite sets $S\in \P(\N)$. 

\begin{remark}\label{rem:nested} 
One may wonder when a pair of infinite sets $L =\{l_n:n\in\N\}$, $R=\{r_n:n\in\N\}$ of $\N$ is the pair of left and right boundary of the same infinite non-cofinite set $S \in \P(\N)$. A necessary condition~\eqref{(*)}  was given above. It is easy to see that it is also sufficient (just take $S=\bigcup_{n \in \N}[l_n,r_n]_\N$). 
\end{remark}

Obviously, given $\uu\in\A$, $\I$ a free ideal of $\N$ and $x\in[0,1)$ with $S=\supp(x)$,
$$\ix\Leftrightarrow \rho(S)\in\I\quad\text{and}\quad \iiix\Leftrightarrow\lambda(S)\in\I.$$
Both conditions are significantly relaxed versions of the sufficient condition $S\in\I$ for $\bar x \in t_\uu^\I(\T)$.

\smallskip
From the necessity in Theorem~\ref{conjecture} (i.e.,~\cite[Proposition 3.3]{I-torsion}), applied to $\rho(S)$ $b$-bounded one can immediately deduce that $\bar x\in t_\uu^\I(\T)\Rightarrow \ix$:

\begin{lemma}\label{rem:nested*}{\rm [$\bar{x}\in t_\uu^\I(\T)\Rightarrow\axuno\Rightarrow\ix$]}
Let $\uu\in\A$, $\I$ a translation invariant proper free $P$-ideal of $\N$ and $x\in [0,1)$ with $S=\supp(x)$. If $\rho(S)$ is $b$-bounded, then the implication 
$\axuno\Rightarrow \rho(S)\in \I$ holds $($i.e., $\axuno\Rightarrow\ix)$; in particular, $\bar x\in t_\uu^\I(\T)\Rightarrow \rho(S)\in \I$  $($i.e., $\bar x\in t_\uu^\I(\T)\Rightarrow \ix)$ holds.
\end{lemma}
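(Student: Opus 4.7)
The plan is to argue by contraposition: assume $\rho(S) \notin \I$ and apply $\axuno$ to $A := \rho(S)$ in order to force $\rho(S) \in \I$, a contradiction. This will immediately give the main implication, and the ``in particular'' assertion will follow because, by Theorem~\ref{conjecture}, $\bar{x} \in t_\uu^\I(\T)$ entails $\ax$ on every $b$-bounded $A \in \P(\N)\setminus\I$, and $\ax$ implies $\axuno$.

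First I would verify that $A := \rho(S)$ satisfies the hypotheses of $\axuno$. By the standing assumption $\rho(S)$ is $b$-bounded; by our contrapositive hypothesis $\rho(S) \notin \I$, so $A \in \P(\N)\setminus\I$; and $A \subseteq S$ trivially gives $A \subseteq^\I S$. Hence $\axuno$ yields, among other things, the key containment $A+1 \subseteq^\I S$, i.e.\ $\bigl(\rho(S)+1\bigr)\setminus S \in \I$.

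Next I would exploit the definitional identity $\rho(S) = S\setminus(S-1)$: if $n \in \rho(S)$, then $n \in S$ and $n+1 \notin S$. Consequently $\bigl(\rho(S)+1\bigr) \cap S = \emptyset$, so $\rho(S)+1 \subseteq S^*$ and therefore $\bigl(\rho(S)+1\bigr)\setminus S = \rho(S)+1$. Combined with the previous paragraph this gives $\rho(S)+1 \in \I$, and translation invariance of $\I$ yields $\rho(S) \in \I$, contradicting the contrapositive hypothesis. This proves $\axuno \Rightarrow \rho(S)\in\I$, which is exactly $\ix$ (using again that $\I$ is translation invariant, since $(S+1)\setminus S = \rho(S)+1$).

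There is no real obstacle here; the only mildly delicate step is the combinatorial identity $(S+1)\setminus S = \rho(S)+1$ (and the disjointness $\bigl(\rho(S)+1\bigr) \cap S = \emptyset$), both of which follow directly from $\rho(S) = S\setminus(S-1)$ and which translate the conclusion of $\axuno$ into information about $\rho(S)$ itself. The whole argument uses only the very first clause ``$A+1 \subseteq^\I S$'' of $\axuno$, so the assumption that $\I$ is a $P$-ideal is not actually needed for this lemma; what matters is translation invariance and freeness.
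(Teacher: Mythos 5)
Your proof is correct and is essentially the paper's own argument: apply $\axuno$ to $A=\rho(S)$, use the disjointness of $\rho(S)+1$ and $S$ to conclude $\rho(S)+1\in\I$, and invoke translation invariance. One small caveat: the $P$-ideal hypothesis is indeed not needed for the implication $\axuno\Rightarrow\ix$, but it is needed for the ``in particular'' clause, since that relies on Theorem~\ref{conjecture}.
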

\begin{proof}
 Assume that $\rho(S)$ is $b$-bounded and that $\axuno$ holds.
If $\rho(S)\not \in \I$, applying $\axuno$ to $\rho(S)\subseteq S$ would imply $\rho(S)+1 \subseteq ^\I S$. As $\rho(S)+1$ and $S$ are disjoint, this would imply that $\rho(S)+1\in\I$ and so $\rho(S)\in \I$, a contradiction.  Hence, $\rho(S)\in\I$.
By Theorem~\ref{conjecture}, $\axuno$ holds under the assumption that $\bar x\in t_\uu^\I(\T)$.
\end{proof}


 The following is the counterpart of Lemma~\ref{rem:nested*} for $\lambda(S)$ and $\axdue$.

\begin{lemma}\label{lambda-1} {\rm [$\axdue\Rightarrow\iiix$]}
Let $\uu\in\A$, $\I$ a translation invariant proper free $P$-ideal of $\N$ and $x\in [0,1)$ with $S=\supp(x)$. If $\lambda(S)-1$ is $b$-bounded, then the implication 
 $\axdue\Rightarrow \lambda(S)\in \I$ holds $($i.e., $\axdue\Rightarrow\iiix)$; in particular, $\bar x\in t_\uu^\I(\T)\Rightarrow \lambda(S)\in \I$  $($i.e., $\bar x\in t_\uu^\I(\T)\Rightarrow \iiix)$ holds.
\end{lemma}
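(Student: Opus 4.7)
The plan is to mirror the argument of Lemma \ref{rem:nested*}, but using the equivalent form $\dageq$ of $\axdue$ supplied by Lemma \ref{ax2p}, applied to the shifted set $A := \lambda(S) - 1$ rather than to $\rho(S)$. The key combinatorial observation is that, by the very definition $\lambda(S) = S \setminus (S+1)$, every $s \in \lambda(S)$ satisfies $s-1 \notin S$; equivalently, $A \cap S = \emptyset$.

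Assuming $\axdue$, I would consider $A = \lambda(S)-1$, which is $b$-bounded by hypothesis. If $A \in \I$, then by translation invariance of $\I$ we immediately get $\lambda(S) = A+1 \in \I$ and we are done. Otherwise $A \in \P(\N) \setminus \I$ is $b$-bounded with $A \cap S = \emptyset \in \I$, so $\dageq$ applies and yields $(A+1) \cap S \in \I$. Since $A+1 = \lambda(S) \subseteq S$, this gives $\lambda(S) \in \I$, which is $\iiix$. The ``in particular'' part follows at once because Theorem \ref{conjecture} guarantees that $\bar x \in t_\uu^\I(\T)$ implies $\axdue$, so the first part applies.

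I do not foresee a serious obstacle: the argument is a clean combinatorial dual of Lemma \ref{rem:nested*}, with $\lambda$ and the shift $-1$ replacing $\rho$ and the shift $+1$. The only mild care needed is the case $A \in \I$, where $\dageq$ is vacuous and translation invariance must be invoked directly; the fact that $0$ may belong to $A$ when $1 \in \lambda(S)$ is harmless since $\I$ is an ideal of $\N$ and the equality $A \cap S = \emptyset$ persists in that situation.
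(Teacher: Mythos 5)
Your proof is correct and follows essentially the same route as the paper's: split on whether $\lambda(S)-1\in\I$ (using translation invariance in that case), and otherwise apply $\axdue$ in its equivalent form $\dageq$ to $A=\lambda(S)-1$, using $(\lambda(S)-1)\cap S=\emptyset$ to conclude $(A+1)\cap S=\lambda(S)\in\I$. The explicit appeal to Lemma~\ref{ax2p} and the final reduction via Theorem~\ref{conjecture} match the paper's argument.
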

\begin{proof}  Assume that $\lambda(S)-1$ is $b$-bounded and that $\axdue$ holds. If $\lambda(S)-1\in \I$, then $\lambda(S)\in\I$ since $\I$ is translation invariant.
 In case $\lambda(S)-1 \not \in \I$, we apply $\axdue$ to $\lambda(S)-1$ noting that $(\lambda(S)-1)\cap S = \emptyset \in \I$, and deduce that $\lambda(S) \cap S =\lambda(S) \in \I$, a contradiction. Hence, $\lambda(S)\in\I$. By Theorem~\ref{conjecture}, $\axdue$ holds under the assumption that $\bar x\in t_\uu^\I(\T)$.
\end{proof}

\begin{definition}\label{def:atomic}
Let $\uu\in\A$. Call an element $x\in [0,1)$ {\em atomic} if its representation has support $S = \supp(x)$ consisting of only isolated points, i.e., $\lambda(S) = \rho(S) = i(S)=S$, and all  non-zero coefficients $c_n(x)$ are equal to $1$.
\end{definition}

In the next proposition we show that Lemma~\ref{Lemma2.2} can be inverted for atomic elements of $b$-bounded support. It can be deduced from much stronger results from~\cite{I-torsion}, but to make our exposition self-contained we prefer to give here a short direct proof.
It is important to keep in mind that the implication $\supp(x)\in \I \Rightarrow \bar x\in t_\uu^\I(\T)$ holds without the assumption that $\supp(x)$ is $b$-bounded.

\begin{proposition}\label{Propo:New}
Let $\uu\in\A$, let $\I$ be a translation invariant proper free $P$-ideal of $\N$ and $x\in [0,1)$ with $S = \supp(x)$. If $x$ is atomic and $S$ is $b$-bounded, then $\bar x\in t_\uu^\I(\T)$ if and only if $S \in \I$. 
\end{proposition}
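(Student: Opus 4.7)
The forward direction $S \in \I \Rightarrow \bar x \in t_\uu^\I(\T)$ is immediate from Lemma~\ref{Lemma2.2}, so only the converse needs a proof. One quick route is available: since $x$ is atomic with $\lambda(S) = \rho(S) = S$ and $\rho(S) = S$ is $b$-bounded by hypothesis, Lemma~\ref{rem:nested*} directly yields $\rho(S) = S \in \I$. The authors want a self-contained argument, so I will sketch a direct one along the same lines.

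Assume $\bar x\in t_\uu^\I(\T)$; I will show that $S-1 \in \I$ and conclude by translation invariance. The atomic assumption gives the clean expansion $x = \sum_{k\in S}1/u_k$, so for $n \in S-1$ (that is, $n+1 \in S$) we have
$$ u_n x \equiv \sum_{k\in S,\ k>n}\frac{u_n}{u_k}= \frac{1}{b_{n+1}} + R_n \pmod{1}, $$
where $R_n$ collects the terms with $k>n+1$. The plan is to prove that $\|u_n\bar x\|$ is bounded away from $0$ uniformly in $n\in S-1$; then the neighborhood $U=\{y\in\T : \|y\|<c\}$ witnesses that $S-1 \subseteq \{n : u_n\bar x \notin U\} \in \I$, hence $S-1\in\I$, and translation invariance finally gives $S = (S-1)+1 \in \I$.

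The uniform lower bound comes from two simple estimates. First, $b$-boundedness of $S$ (with bound $M$) gives $n+1\in S\Rightarrow b_{n+1}\le M$, so $1/b_{n+1}\ge 1/M$. Second, because $S$ consists of isolated points, consecutive elements of $S$ differ by at least $2$; combined with $b_i\ge 2$, a straightforward geometric estimate shows $R_n\le 1/6$ (or any comparable constant $<1/2-1/M$ would do). Thus $1/b_{n+1}+R_n$ lies in an interval of the form $[1/M,\,2/3]\subset(0,1)$ that stays uniformly away from both $0$ and $1$, yielding $\|u_n\bar x\|\ge c:=\min(1/M,1/3)>0$ for every $n\in S-1$.

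The only genuinely delicate point in this plan is making sure the sum $1/b_{n+1}+R_n$ does not cross $1/2$ and land close to some integer from the right; this is why the isolated-point hypothesis is essential, since it forces the tail $R_n$ to be geometrically dominated, keeping the total under $2/3$. Everything else is routine: the absolute convergence of the tail, the identification of the integer part, and the final translation-invariance step $S-1\in\I\Rightarrow S\in\I$ (which also uses freeness only insofar as $\I$ is a proper ideal).
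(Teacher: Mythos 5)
Your proposal is correct and follows essentially the same route as the paper's proof: bound the fractional part of $u_nx$ for $n\in S-1$ below by $1/M$ (from $b$-boundedness) and above by a constant less than $1$ (from the isolated-point/geometric estimate, the paper getting $5/6$ where you get $2/3$), deduce $S-1\in\I$ from $\I$-convergence, and finish by translation invariance. Even your opening remark that Lemma~\ref{rem:nested*} would give a shortcut mirrors the paper's own comment before it opts for the direct argument.
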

\begin{proof}
In view of Lemma~\ref{Lemma2.2} $S\in\I\Rightarrow \bar x\in t_\uu^\I(\T)$. In order to prove that 
when $x \in [0,1)$ is atomic, $\bar x\in t_\uu^\I(\T)\Rightarrow S\in\I$, one can apply  Lemma~\ref{rem:nested*}, but we prefer to give a direct proof.

Write $S = \lambda(S) = \rho(S) = i(S)= \{r_0< r_1 < \ldots\}$, so $x = \sum _{m\in S} \frac{1}{u_{m}} = \sum _{n=0}^\infty \frac{1}{u_{r_n}}$. 
Let $C\in\N$ be an upper bound for all $b_n$, $n \in S$. 
For every $n\in \N$, since $r_n < r_{n+1} -1$, one has  
 $$u_{r_n-1} x \equiv_\Z  \frac{1}{b_{r_n}} +  \frac{1}{b_{r_n} \ldots b_{r_{n+1}}}+ \frac{1}{b_{r_n} \ldots b_{r_{n+1}} \ldots b_{r_{n+2}}}+
\ldots \leq \frac{1}{2} + \frac{1}{4} + \frac{1}{4^2} + \ldots = \frac{1}{2} + \frac{1}{3} = \frac{5}{6}.$$ 
As obviously also $\frac{1}{C}  \leq \frac{1}{b_{r_n}}  \leq \{u_{r_{n-1}} x\}$ holds, we deduce that  $\|u_{r_{n-1}} x\| \geq \frac{1}{C} $ for all $n\in \N_+$.  The sequence $u_n\bar x$ $\I$-converges to $0$, therefore $S -1 = \{r_n-1: n\in \N\}  \in \I$, and hence $S\in\I$ being $\I$ translation invariant.
\end{proof}

\subsection{The $\flat$-truncation and its atomic components}

Let $\uu\in\A$ and $x\in[0,1)$. Define the {\em $\flat$-truncation} of $x = \sum_{n\in \supp(x)} \frac{c_n(x)}{u_n} $ by 
\begin{equation}\label{Eq:May12}
{x^\flat} := \sum_{n\in \supp_b(x)} \frac{b_n-1}{u_n},
\end{equation}
i.e., $c_n(x^\flat) = c_n(x) = b_n-1$ for all $n\in \supp_b(x)$ and $c_n(x^\flat) = 0$ for all $n\in\N_+\setminus \supp_b(x)$. 

\smallskip
 If $\supp_b(x)$ is finite, then $\bar x^\flat \in  t_\uu^\I(\T)$. Hence, if $\supp_b(x)\subseteq_\I\supp(x)$ (i.e., $\iix$ holds), then $\supp(x)\in\I$ and so $\bar x\in  t_\uu^\I(\T)$ as well by Lemma~\ref{Lemma2.2}. So, now we are going to give a closer look at $x^\flat$ when $\supp_b(x)$ is infinite, associating to $x^\flat$ two atomic elements $\alpha^{(l)}(x)$ and $\alpha^{(r)}(x)$ as follows.

\begin{remark}\label{NEW:Rem}
Let $\uu\in\A$ and $x\in[0,1)$ such that $\supp(x)$ is neither finite nor cofinite, and also $\supp_b(x)$ is infinite. 
Then $x^\flat = \sum_{n=1}^\infty y_n$, where the elements $y_n= y_n^\flat$ have pairwise disjoint finite supports  which are  intervals $[l_n,r_n]_\N$.
So, by~\eqref{finiteestimation},
$$y_n= \frac{1}{u_{l_n-1}}-\frac{1}{u_{r_n}}.$$ 
Therefore, since the series defining $x^\flat$ is absolutely convergent, 
$$x^\flat = \sum_{n=1}^\infty y_n  = \sum_{n=1}^\infty \left( \frac{1}{u_{l_n-1}} - \frac{1}{u_{r_n}}\right)= 
 \sum_{n=1}^\infty  \frac{1}{u_{l_n-1}} - \sum_{n=1}^\infty \frac{1}{u_{r_n}} = \alpha^{(l)}(x) - \alpha^{(r)}(x)$$
 where we briefly write $\alpha^{(l)}(x):= \sum_{n=1}^\infty  \frac{1}{u_{l_n-1}}$ and $\alpha^{(r)}(x):= \sum_{n=1}^\infty \frac{1}{u_{r_n}}$;
this pair of atomic elements can be called {\em left and right atomic components of} $x^\flat$.
\end{remark}

The following is then a consequence of Proposition~\ref{Propo:New}.

\begin{corollary}\label{123}{\rm [$\ix\&\iiix\Rightarrow \overline{x^\flat}\in t_\uu^\I(\T)$]}
Let $\uu\in\A$, let $\I$ be a translation invariant free ideal of $\N$ and $x\in[0,1)$ with $S=\supp(x)$ infinite non-cofinite and $S_b=\supp_b(x)$ infinite.
\begin{itemize}
\item[(1)] If $\rho(S)\in\I$, then $\overline{\alpha^{(r)}(x)}\in t_\uu^\I(\T)$.
\item[(2)] If $\lambda(S)\in\I$, then $\overline{\alpha^{(l)}(x)}\in t_\uu^\I(\T)$.
\item[(3)] If $\rho(S)\cup\lambda(S)\in\I$ (i.e., $\ix\&\iiix$ holds), then $\overline{x^\flat} \in t_\uu^\I(\T)$.
\end{itemize}
\end{corollary}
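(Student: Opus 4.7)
I plan to deduce all three parts from Lemma~\ref{Lemma2.2}, by identifying the canonical support of each atomic element and showing it lies in $\I$. Remark~\ref{NEW:Rem} presents $\alpha^{(r)}(x)=\sum_{n\geq 1}1/u_{r_n}$ and $\alpha^{(l)}(x)=\sum_{n\geq 1}1/u_{l_n-1}$ as atomic elements; since $b_n\geq 2$ and both $\N_+\setminus\{r_n\}$ and $\N_+\setminus\{l_n-1\}$ are infinite (because $S$ is non-cofinite), these are canonical representations with supports $\rho(S_b)=\{r_n\}$ and $\lambda(S_b)-1=\{l_n-1\}$ respectively. So (1) reduces to $\rho(S_b)\in\I$ and (2) to $\lambda(S_b)\in\I$ (the shift by $-1$ is absorbed by translation invariance of $\I$).

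\textbf{Key inclusions.} For (1), each right endpoint $r_n\in S_b\subseteq S$ satisfies $r_n+1\notin S_b$ by maximality of $[l_n,r_n]_\N$; splitting on whether $r_n+1$ lies outside $S$ or inside $S\setminus S_b$ yields
\[
\rho(S_b)\ \subseteq\ \rho(S)\,\cup\,\big((S\setminus S_b)-1\big),
\]
whose first piece is in $\I$ by $\ix$. A symmetric analysis from each left endpoint $l_n$ gives, for (2),
\[
\lambda(S_b)\ \subseteq\ \lambda(S)\,\cup\,\big((S\setminus S_b)+1\big),
\]
whose first piece is in $\I$ by $\iiix$. Once we show that the exceptional pieces $(S\setminus S_b)\pm 1$ belong to $\I$, Lemma~\ref{Lemma2.2} delivers (1) and (2). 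Part (3) then follows immediately: from (1), (2) and the identity $x^\flat=\alpha^{(l)}(x)-\alpha^{(r)}(x)$ of Remark~\ref{NEW:Rem}, $\overline{x^\flat}\in t_\uu^\I(\T)$ as a difference in the subgroup $t_\uu^\I(\T)$ of $\T$.

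\textbf{Main obstacle.} The delicate step is checking that the exceptional pieces $(S\setminus S_b)\pm 1$ lie in $\I$. By translation invariance this reduces to $S\setminus S_b\in\I$, i.e., to $\iix$, which is not formally listed among the hypotheses of (1) or (2). My plan is to keep $\iix$ in reserve---it is available in the principal downstream application, the proof of Theorem~\ref{in} under $\Tx=\ix\&\iix\&\iiix$, where $\iix$ holds---or else to bypass it by a direct estimate of $\|u_k\overline{\alpha^{(r)}(x)}\|$ for $k$ approaching an ``interior'' right endpoint $r_n\in\rho(S_b)\setminus\rho(S)$ (those with $r_n+1\in S\setminus S_b$), an approach that looks substantially more delicate and which I would attempt only if the direct route via $\iix$ fails.
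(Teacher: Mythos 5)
Your outline coincides with the paper's: both reduce each part to ``the support lies in $\I$'' (the paper cites Proposition~\ref{Propo:New}, though only the easy direction, i.e.\ Lemma~\ref{Lemma2.2}, is used) and obtain (3) from (1), (2) and the identity $x^\flat=\alpha^{(l)}(x)-\alpha^{(r)}(x)$. The one difference is decisive: the paper's proof of (1) and (2) is a one-liner because it asserts $\supp(\alpha^{(r)}(x))=\rho(S)$ and $\supp(\alpha^{(l)}(x))=\lambda(S)-1$, whereas you compute $\supp(\alpha^{(r)}(x))=\rho(S_b)$ and $\supp(\alpha^{(l)}(x))=\lambda(S_b)-1$. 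Your computation is the correct one: in Remark~\ref{NEW:Rem} the intervals $[l_n,r_n]_\N$ are the maximal intervals of $\supp(x^\flat)=S_b$ (otherwise the formula $y_n=\frac{1}{u_{l_n-1}}-\frac{1}{u_{r_n}}$ and the identity $x^\flat=\sum_n y_n$ would break down), so the paper is silently identifying the boundary points of $S$ with those of $S_b$.

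Consequently, the obstacle you flag is genuine, and it is a defect of the statement rather than of your argument. Your inclusions $\rho(S_b)\subseteq\rho(S)\cup((S\setminus S_b)-1)$ and $\lambda(S_b)\subseteq\lambda(S)\cup((S\setminus S_b)+1)$ are correct, and the exceptional pieces really can fail to be in $\I$: take $u_n=3^n$, $\I=\I_d$, $S=\bigcup_{n\geq 1}[n^2,n^2+n]_\N$, and $c_m=2$ or $1$ according as $m\in S$ is even or odd. Then $\rho(S)\cup\lambda(S)\in\I_d$, while $S_b$ consists of isolated points of positive density, so $\rho(S_b)=\lambda(S_b)=S_b\notin\I_d$ and $\overline{\alpha^{(r)}(x)}\notin t_\uu^{\I_d}(\T)$ by Proposition~\ref{Propo:New} (and likewise $\overline{x^\flat}\notin t_\uu^{\I_d}(\T)$, so even part (3) fails as literally stated). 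Hence the corollary needs either $S_b$ in place of $S$ in its hypotheses or the extra assumption $\iix$; your plan to carry $\iix$ along is exactly the right repair, and it costs nothing because $\iix$ is available in the only place the corollary is applied, namely the proof of Theorem~\ref{in}. The ``direct estimate'' fallback you mention is not needed and, by the example above, cannot succeed.
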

\begin{proof}
(1) Since $\rho(S)=\supp(\alpha^{(r)}(x))$, $\rho(S)\in\I$ implies $\overline{\alpha^{(r)}(x)}\in t_\uu^\I(\T)$ by Proposition~\ref{Propo:New}.

(2) Since $\lambda(S)-1=\supp(\alpha^{(l)}(x))$, $\lambda(S)\in\I$ implies $\overline{\alpha^{(l)}(x)}\in t_\uu^\I(\T)$ by Proposition~\ref{Propo:New}.

(3) By (1) and (2), $\rho(S)\cup\lambda(S)\in\I$ implies $\overline{\alpha^{(r)}(x)}\in t_\uu^\I(\T)$ and $\overline{\alpha^{(l)}(x)}\in t_\uu^\I(\T)$, so $\overline{x^\flat}=\overline{\alpha^{(l)}(x)}-\overline{\alpha^{(r)}(x)}\in t_\uu^\I(\T)$.
\end{proof}

Under the assumption that $\rho(S)$ and $\lambda(S)-1$ are $b$-bounded, the implications in (1) and (2) of Corollary~\ref{123} can be inverted:

\begin{lemma}\label{lambdarho}
Let $\uu\in\A$, let $\I$ be a translation invariant free ideal of $\N$ and $x\in[0,1)$ with $S=\supp(x)$ infinite non-cofinite and $S_b=\supp_b(x)$ infinite.
  \begin{itemize}
\item[(1)] If $\rho(S)$ is $b$-bounded, then $\overline{\alpha^{(r)}(x)}\in t_\uu^\I(\T)$ (i.e., $\mathrm{(i}_{\alpha^{(r)}(x)}\mathrm{)}$ holds) if and only if $\rho(S)\in \I$ (i.e., $\ix$ holds); 
\item[(2)] If $\lambda(S)-1$ is $b$-bounded, then $\overline{\alpha^{(l)}(x)}\in t_\uu^\I(\T)$ (i.e., $\mathrm{(iii}_{\alpha^{(l)}(x)}\mathrm{)}$ holds) if and only if $\lambda(S)\in \I$ (i.e., $\iiix$ holds).
\end{itemize}
\end{lemma}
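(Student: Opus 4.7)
The plan is to deduce both equivalences directly from Proposition~\ref{Propo:New}, applied to the atomic elements $\alpha^{(r)}(x)$ and $\alpha^{(l)}(x)$, once one has verified that these elements are indeed atomic and that their supports are $b$-bounded. Note that the ``if'' directions of both (1) and (2) are already supplied by Corollary~\ref{123}(1) and (2), so only the converse implications require work; and that converse is packaged by Proposition~\ref{Propo:New}, so the plan amounts to checking that its hypotheses are met.

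For (1), I would first verify that $\alpha^{(r)}(x)=\sum_{n}\frac{1}{u_{r_n}}$ is in canonical form and is atomic: its support equals $\rho(S)=\{r_n:n\in\N\}$, every non-zero coefficient equals $1$, and the points $r_n$ are isolated in $\N$. The last point is immediate from the standard decomposition $S=\bigcup_{n}[l_n,r_n]_\N$ with $l_n\leq r_n<l_{n+1}-1$, which forces $r_{n+1}\geq l_{n+1}\geq r_n+2$. Since $\rho(S)=\supp(\alpha^{(r)}(x))$ is $b$-bounded by hypothesis, Proposition~\ref{Propo:New} applies and yields
\[\overline{\alpha^{(r)}(x)}\in t_\uu^\I(\T)\ \Leftrightarrow\ \rho(S)\in \I.\]
To bridge to $\ix$, one observes that $(S+1)\setminus S=\rho(S)+1$, so by translation invariance of $\I$ the condition $\ix$ is equivalent to $\rho(S)\in \I$.

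For (2), the argument is entirely symmetric. The element $\alpha^{(l)}(x)=\sum_{n}\frac{1}{u_{l_n-1}}$ is atomic with support $\lambda(S)-1=\{l_n-1:n\in\N\}$: from $l_{n+1}>r_n+1\geq l_n+1$ one gets $l_{n+1}-l_n\geq 2$, so $\lambda(S)-1$ consists only of isolated points of $\N$. The $b$-boundedness of $\lambda(S)-1$ allows Proposition~\ref{Propo:New} to give
\[\overline{\alpha^{(l)}(x)}\in t_\uu^\I(\T)\ \Leftrightarrow\ \lambda(S)-1\in \I\ \Leftrightarrow\ \lambda(S)\in \I,\]
the last equivalence by translation invariance. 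Finally, $(S-1)\setminus S=\lambda(S)-1$ shows that $\iiix$ is equivalent to $\lambda(S)\in \I$.

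The only potential obstacle is purely bookkeeping: confirming that $\alpha^{(r)}(x)$ and $\alpha^{(l)}(x)$ satisfy the definition of \emph{atomic} (Definition~\ref{def:atomic}), i.e., that the maximal-interval decomposition of $S$ forces the sets $\rho(S)$ and $\lambda(S)-1$ to contain only isolated points. Everything else is a direct invocation of Proposition~\ref{Propo:New}, Corollary~\ref{123}, translation invariance of $\I$, and the combinatorial reformulations of $\ix$ and $\iiix$ as $\rho(S)\in\I$ and $\lambda(S)\in\I$, respectively.
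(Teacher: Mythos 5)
Your proposal is correct and follows essentially the same route as the paper: the ``if'' directions come from Lemma~\ref{Lemma2.2} (equivalently, Corollary~\ref{123}(1)--(2)), and the ``only if'' directions are exactly an application of Proposition~\ref{Propo:New} to the atomic elements $\alpha^{(r)}(x)$ and $\alpha^{(l)}(x)$, together with translation invariance in case~(2). The extra bookkeeping you carry out (checking atomicity of the components and the reformulations $\ix\Leftrightarrow\rho(S)\in\I$, $\iiix\Leftrightarrow\lambda(S)\in\I$) is material the paper treats as already established in Remark~\ref{NEW:Rem} and \S\ref{Sec:boundary}.
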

\begin{proof}
(1) If $\rho(S)\in\I$, then $\overline{\alpha^{(r)}(x)}\in t_\uu^\I(\T)$ by Lemma~\ref{Lemma2.2}

 Assume that $\rho(S)$ is $b$-bounded and $\overline{\alpha^{(r)}(x)}\in t_\uu^\I(\T)$; then $\rho(S)\in \I$ by Proposition~\ref{Propo:New}.

(2) If $\lambda(S)\in\I$, then $\overline{\alpha^{(l)}(x)}\in t_\uu^\I(\T)$ by Lemma  \ref{Lemma2.2}. Assume that $\lambda(S)-1$ is $b$-bounded and $\overline{\alpha^{(l)}(x)}\in t_\uu^\I(\T)$; then $\lambda(S)\in \I$ by Proposition~\ref{Propo:New}, as $\supp(\alpha^{(l)}(x) ) = \lambda(S)-1$ and $\I$ is translation invariant.
\end{proof}
 
\subsection{The strength of $\Tx$}

Let $\uu\in\A$, let $\I$ be a free ideal of $\N$ and $x,y\in[0,1)$. We write $x\equiv_\I y$ in case $\{n\in\N_+:c_n(x)\neq c_n(y)\}\in\I$. 
If $x\equiv_\I y$, then necessarily $\supp(x)=^\I\supp(y)$. In the specific case of $x$ and $x^\flat$, $x\equiv_\I x^\flat$ precisely when $\supp_b(x) \subseteq_\I \supp(x)$.

\begin{remark}\label{Iae} Let $\uu\in\A$, let $\I$ be translation invariant free ideal of $\N$ and $x,y\in[0,1)$. In this paper we use the following equivalences.
If $\supp(x)=^\I\supp(y)$, then $\ix\Leftrightarrow\mathrm{(i}_y\mathrm{)}$,  $\iix\Leftrightarrow\mathrm{(ii}_y\mathrm{)}$, $\IIx\Leftrightarrow\mathrm{(II}_y\mathrm{)}$ and $\axdue\Leftrightarrow\mathrm{(a2}_y\mathrm{)}$. If $x\equiv_\I y$, then also $\Ix\Leftrightarrow\mathrm{(I}_y\mathrm{)}$.
\end{remark}

\begin{proposition}\label{Coro:May11}
Let $\uu\in\mathcal A$, let $\I$ be a translation invariant free ideal of $\N$ and $x,y \in [0,1)$. If $x\equiv_\I y$, then $\bar{x}\in t_\uu^\I(\T)$ if and only if $\bar{y}\in t_\uu^\I(\T)$.
\end{proposition}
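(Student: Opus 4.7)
The plan is to show that $\bar z:=\bar x-\bar y\in t_\uu^\I(\T)$; since $t_\uu^\I(\T)$ is a subgroup of $\T$, the desired equivalence then follows from $\bar x=\bar y+\bar z$. (That $t_\uu^\I(\T)$ is a subgroup is routine: for $\bar a,\bar b\in t_\uu^\I(\T)$ and a neighborhood $U$ of $0$, pick $V$ with $V-V\subseteq U$ and note that $\{n: u_n(\bar a-\bar b)\notin U\}$ lies in the union of the two $\I$-sets $\{n: u_n\bar a\notin V\}$ and $\{n: u_n\bar b\notin V\}$.) Set $E:=\{n\in\N_+: c_n(x)\neq c_n(y)\}\in\I$; since the coefficients of $x$ and $y$ agree off $E$,
\[
z = x-y = \sum_{n\in E}\frac{c_n(x)-c_n(y)}{u_n}.
\]

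For each $k\in\N$, since $u_n\mid u_k$ for all $n\leq k$, the contribution of those $n$ to $u_k z$ is an integer. Combining this with $|c_n(x)-c_n(y)|\leq b_n-1$ and the telescoping identity~\eqref{finiteestimation} yields
\[
\|u_k\bar z\|\leq u_k\!\!\sum_{n\in E,\,n>k}\!\!\frac{b_n-1}{u_n}\leq \frac{u_k}{u_{n_0-1}}
\]
whenever $n_0:=\min(E\cap(k,\infty))$ exists, while $u_k\bar z=0$ if $E\cap(k,\infty)=\emptyset$. Fix a neighborhood $U$ of $0$ in $\T$, choose $\varepsilon>0$ with $\{w\in\T:\|w\|<\varepsilon\}\subseteq U$, and pick $M\in\N_+$ with $2^{-M}<\varepsilon$. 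As $\uu\in\A$ is strictly increasing with $u_n\mid u_{n+1}$, $b_n\geq 2$ for every $n$, hence $u_{k+M}\geq 2^M u_k$ for every $k$. Thus whenever $E\cap(k,k+M]=\emptyset$, either $u_k\bar z=0$ or $n_0\geq k+M+1$, and in both cases $\|u_k\bar z\|\leq 2^{-M}<\varepsilon$, so $u_k\bar z\in U$.

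Consequently,
\[
\{k\in\N: u_k\bar z\notin U\}\subseteq \{k\in\N: E\cap(k,k+M]\neq\emptyset\}=\bigcup_{i=1}^{M}\bigl((E-i)\cap\N\bigr),
\]
and the latter lies in $\I$ because $\I$ is translation invariant and $E\in\I$. This gives $u_n\bar z\overset{\I}\longrightarrow 0$, so $\bar z\in t_\uu^\I(\T)$, completing the proof. The only subtle point is realizing that the required ``buffer'' $M$ depends on $\varepsilon$ but not on $k$; this is what lets us cover the exceptional set of $k$'s by a \emph{finite} union of integer translates of $E$, at which stage translation invariance of $\I$ seals the argument. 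Without the uniform lower bound $b_n\geq 2$ inherited from $\uu$ being arithmetic, the buffer length would have to vary with $k$ and the exceptional set could no longer be controlled by finitely many translates.
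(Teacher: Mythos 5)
Your proof is correct, but it takes a genuinely different route from the paper's. The paper splits off the common part $z=\sum_{n\in\supp(x)\cap\supp(y)}\frac{c_n(x)}{u_n}$, writes $x=z+x'$ and $y=z+y'$, notes that $\supp(x')$ and $\supp(y')$ lie in $\I$, and then concludes by Lemma~\ref{Lemma2.2} together with the subgroup property of $t_\uu^\I(\T)$. You instead work directly with the difference $\bar z=\bar x-\bar y$, whose signed digit expansion is concentrated on the $\I$-set $E=\{n:c_n(x)\neq c_n(y)\}$, and give a self-contained quantitative estimate: $\|u_k\bar z\|<\varepsilon$ whenever $E$ misses the window $(k,k+M]$, with $M$ depending only on $\varepsilon$, so the exceptional set of indices is covered by the finitely many translates $(E-1),\dots,(E-M)$ and translation invariance of $\I$ finishes the argument. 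In effect you re-prove a signed-coefficient version of Lemma~\ref{Lemma2.2} rather than citing it. What your route buys is robustness at exactly the point the paper glosses over: since $z$ is built from the coefficients of $x$, the element $y'=y-z$ can have negative digits (and need not lie in $[0,1)$) at those $n\in\supp(x)\cap\supp(y)$ where $c_n(x)\neq c_n(y)$, so its ``support'' is really only contained in $(\supp(y)\setminus\supp(x))\cup E\in\I$ and Lemma~\ref{Lemma2.2} does not apply verbatim without a small adjustment; your direct estimate handles arbitrary signed coefficients bounded by $b_n-1$ and needs no such patch. What the paper's route buys is brevity and reuse of an already-established lemma.
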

\begin{proof} 
Let $z= \sum_{n \in \supp(x) \cap \supp(y)}\frac{c_n(x)}{u_n}$, $x'=x - z$ and $y'= y - z$. Then, by hypothesis, $\supp(x') = \supp(x)\setminus \supp(y) \in \I$
and $\supp(y') = \supp(y)\setminus \supp(x) \in \I$, so $\overline{x'}, \overline{y'} \in t_\uu^\I(\T)$, by Lemma~\ref{Lemma2.2}. Therefore, $\bar x\in t_\uu^\I(\T)$ implies 
$\bar z\in t_\uu^\I(\T)$, which in turn implies $\bar y = \bar z + \overline{y'} \in t_\uu^\I(\T)$. Similarly, $\bar y \in t_\uu^\I(\T)$ implies $\bar x \in t_\uu^\I(\T)$. 
%
\end{proof}


 As an easy application of Proposition~\ref{Coro:May11}, we get:

\begin{corollary}\label{Coro1:May14}
Let $\uu\in\mathcal A$, let $\I\not= \F in$ be a free ideal of $\N$ and $x \in [0,1)$ with $S_b=\supp_b(x)$.
If $S_b\in \I^*$, then $x\equiv_\I x^\flat$ and $\overline{x^\flat}\in t_\uu^\I(\T)$, so $\bar{x}\in t_\uu^\I(\T)$.
\end{corollary}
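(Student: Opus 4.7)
The plan is to verify the two claims $x\equiv_\I x^\flat$ and $\overline{x^\flat}\in t_\uu^\I(\T)$ separately, and then deduce $\bar x\in t_\uu^\I(\T)$ via Proposition \ref{Coro:May11}.

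First, I would compare canonical coefficients of $x$ and $x^\flat$. By the definition~\eqref{Eq:May12} of the $\flat$-truncation, $c_n(x^\flat)=b_n-1=c_n(x)$ for $n\in S_b$, while $c_n(x^\flat)=0$ for $n\in\N_+\setminus S_b$. Hence the disagreement set $\{n\in\N_+:c_n(x)\neq c_n(x^\flat)\}$ is contained in $\N_+\setminus S_b$, which lies in $\I$ because $S_b\in\I^*$. This immediately gives $x\equiv_\I x^\flat$.

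For the second claim, my idea is to exhibit $x^\flat$ as a ``complementary'' element in $\T$. Letting $r\to\infty$ in the telescoping formula~\eqref{finiteestimation} yields $\sum_{n=1}^\infty\frac{b_n-1}{u_n}=1$, so setting $z:=\sum_{n\in\N_+\setminus S_b}\frac{b_n-1}{u_n}$ one has $x^\flat+z=1$ in $\R$, and consequently $\overline{x^\flat}=-\bar z$ in $\T$. To apply Lemma~\ref{Lemma2.2} to $\bar z$ I must ensure that $z\in[0,1)$ and that the displayed series is the canonical representation of $z$ with support $\N_+\setminus S_b\in\I$. Here I would use that $\I$ being proper and free, together with $S_b\in\I^*$, forces $S_b$ to be infinite (otherwise $S_b\in\F in\subseteq\I$, whence $\N=S_b\cup(\N\setminus S_b)\in\I$, contradicting properness); hence infinitely many positive terms are omitted from the series defining $z$, so $z<1$, while $z\geq 0$ is clear. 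The digits $c_n(z)=b_n-1$ for $n\in\N_+\setminus S_b$ and $c_n(z)=0$ for $n\in S_b$ satisfy $c_n(z)<b_n$, and the strict inequality $c_n(z)<b_n-1$ holds for all $n\in S_b$, an infinite set; thus the expansion of $z$ is canonical, giving $\supp(z)=\N_+\setminus S_b\in\I$.

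Having $\supp(z)\in\I$, Lemma~\ref{Lemma2.2} yields $\bar z\in t_\uu^\I(\T)$, and since $t_\uu^\I(\T)$ is a subgroup of $\T$, $\overline{x^\flat}=-\bar z\in t_\uu^\I(\T)$. Combining this with $x\equiv_\I x^\flat$ and invoking Proposition~\ref{Coro:May11} then delivers $\bar x\in t_\uu^\I(\T)$, completing the proof. The only nontrivial point is the verification that the auxiliary element $z$ has the claimed canonical expansion and in particular lies in $[0,1)$; this is precisely where the hypotheses ``$\I$ proper'', ``$\I$ free'' and ``$S_b\in\I^*$'' are used jointly.
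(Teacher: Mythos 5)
Your proof is correct and follows essentially the same route as the paper's: the same observation that the disagreement set of $x$ and $x^\flat$ lies in $\N_+\setminus S_b\in\I$, the same complementary element (your $z$ is the paper's $x^*=\sum_{n\in S_b^*}\frac{b_n-1}{u_n}$ with $x^\flat+x^*=1$), and the same appeal to Lemma~\ref{Lemma2.2} and Proposition~\ref{Coro:May11}. The only cosmetic difference is that the paper dispatches the case of finite $S_b$ as obvious ($x^\flat$ is then torsion), whereas you rule it out via properness of $\I$; both are fine.
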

\begin{proof} 
The hypothesis $S_b \in \I^*$ means that $\N \setminus S_b \in \I$, so $S \setminus S_b \in \I$ as well, and hence $x\equiv_\I x^\flat$.
Now by Proposition~\ref{Coro:May11}, $\overline{x^\flat} \in t_\uu^\I(\T)$ if and only if $\bar x\in t_\uu^\I(\T)$. 

It remains to show that $\overline{x^\flat}\in t_\uu^\I(\T)$. This is obvious when $S_b$ is finite, so assume that $S_b$ is infinite (so $S_b^*$ is not cofinite).
Let $x^* = \sum_{n\in S_b^*}\frac{b_n-1}{u_n}\in [0,1)$. Since $S_b^*$ is not cofinite, this is a representation as in~\eqref{ex-4}. Hence, Lemma~\ref{Lemma2.2} implies that $\overline{x^*} \in t_\uu^\I(\T)$. Since $x^\flat + x^*=1$, one gets $\overline{x^\flat} = -\overline{x^*} \in  t_\uu^\I(\T)$.
\end{proof}

In general $\supp(x) \in \I^*$ need not imply $\bar x \in t_\uu^\I(\T)$.
Indeed, if $\I$ is a proper free ideal of $\N$, $\uu=(3^n)$ and $x=\sum_{i=1}^\infty \frac{1}{3^n}$, then $u_nx\equiv_\Z \frac{1}{2}$ for every $n\in\N$.
Hence, $\bar x\not\in t_\uu^\I(\T)$, while $\supp(x)\in\I^*$, as $\supp(x)=\N_+$.

\begin{remark}\label{auto} 
Let $\uu\in\mathcal A$, let $\I$ be a translation invariant free ideal of $\N$ and $x\in[0,1)$. If $\supp(x)\in\I$, then $\Tx$ is automatically satisfied. 
\end{remark}

 If $\I$ is a translation invariant free ideal of $\N$, $\uu\in\A$ and $x\in[0,1)$ with $S=\supp(x)$, then $S-1\subseteq^\I S$ is equivalent to $S\subseteq^\I S+ 1$. Therefore, the conjunction of $\iiix$ with $\ix$ gives the important $\I$-equality $S \buildrel{\I}\over = S+1$, i.e., $S \Delta (S+1) \in \I$. Moreover, adding also $\iix$, we obtain the following:
\begin{claim} 
For $\uu\in\A$, a translation invariant free ideal $\I$ of $\N$ and $x\in[0,1)$ with $S=\supp(x)$ and $S_b=\supp_b(x)$, $\Tx$ is equivalent to $S + 1  \buildrel{\I}\over = S  \buildrel{\I}\over = S_b \ (\buildrel{\I}\over = S-1)$.
\end{claim}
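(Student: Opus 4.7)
The plan is to establish the equivalence by peeling off the three conjuncts of $\Tx$ one at a time, relying only on the translation invariance of $\I$ and on the trivial inclusion $S_b \subseteq S$ that holds by the very definitions of $S$ and $S_b$.

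First, I would observe that since $S_b \subseteq S$, one automatically has $S_b \setminus S = \emptyset \in \I$, so the condition $\iix$ (which reads $S_b \subseteq_\I S$) reduces to the single statement $S \setminus S_b \in \I$. Combined with $S_b \setminus S = \emptyset$, this is exactly $S \Delta S_b \in \I$, i.e.\ $S =^\I S_b$.

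Next, I would rewrite $\iiix$ using the translation invariance of $\I$: the shift $n \mapsto n+1$ is a bijection between $(S-1)\setminus S$ and $S \setminus (S+1)$, and translation invariance yields $(S-1)\setminus S \in \I$ if and only if $S\setminus (S+1) \in \I$. Hence $\iiix$ is equivalent to $S \subseteq^\I S+1$. Conjoining this reformulation with $\ix$ (which is $S+1 \subseteq^\I S$) gives $S \Delta (S+1) \in \I$, that is, $S =^\I S+1$. Putting the two equivalences together, $\Tx$ is equivalent to the conjunction $S+1 =^\I S =^\I S_b$. The parenthetical $=^\I S-1$ is another instance of translation invariance, obtained by shifting the identity $S =^\I S+1$ by $-1$.

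Since the whole argument reduces to unpacking the defined abbreviations and applying translation invariance twice, there is no genuine obstacle; the only point that requires care is keeping track of which direction of the translation is being used when passing between $(S-1)\setminus S$ and $S \setminus (S+1)$, but both directions are immediate once one notes that the translation invariance hypothesis in this paper allows shifts by arbitrary $n \in \Z$.
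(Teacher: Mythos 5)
Your proposal is correct and follows essentially the same route as the paper: the paper likewise notes that $\iix$ amounts to $S =^{\I} S_b$ (since $S_b \subseteq S$ automatically), and that $\iiix$ is, by translation invariance, equivalent to $S \subseteq^{\I} S+1$, so that together with $\ix$ it yields $S =^{\I} S+1$. Your version merely spells out the bijection between $(S-1)\setminus S$ and $S\setminus(S+1)$ that the paper leaves implicit.
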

\begin{proof} 
The equivalence is due to the fact that $S  \buildrel{\I}\over = S_b$ is equivalent to $\iix$, while $S + 1  \buildrel{\I}\over = S $  is equivalent to $\ix\&\iiix$.
\end{proof} 

\begin{theorem}\label{in}{\rm [$\Tx\Rightarrow\bar{x}\in t_\uu^\I(\T)$]}
Let $\uu\in\mathcal A$, let $\I$ be a translation invariant free ideal of $\N$ and $x\in[0,1)$.
If $\Tx$ holds, then $\bar{x}\in t_\uu^\I(\T)$.
\end{theorem}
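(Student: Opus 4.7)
The plan is to use $\iix$ together with Proposition~\ref{Coro:May11} to replace $x$ by $x^\flat$, and then to split into three cases according to how $S_b=\supp_b(x)$ sits with respect to $\I$, with the main case dispatched by Corollary~\ref{123}(3).

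Writing $S=\supp(x)$, I would first observe that $\iix$ is exactly the statement $S\setminus S_b\in\I$, i.e., $x\equiv_\I x^\flat$ (as noted right before Remark~\ref{Iae}), so Proposition~\ref{Coro:May11} reduces the goal to proving $\overline{x^\flat}\in t_\uu^\I(\T)$; one may assume $\I$ proper, since the improper case is trivial. Three cases arise. (a) If $S_b\in\I$ then $\supp(x^\flat)=S_b\in\I$ and Lemma~\ref{Lemma2.2} applied to $x^\flat$ gives the conclusion. (b) If $S_b\in\I^*$, then the properness of $\I$ forces $S_b$ to be infinite (otherwise $\N=S_b\cup(\N\setminus S_b)\in\I$), so $y:=\sum_{n\in\N_+\setminus S_b}\frac{b_n-1}{u_n}$ is a valid canonical representation of an element of $[0,1)$ with support $\N_+\setminus S_b\in\I$; Lemma~\ref{Lemma2.2} yields $\bar y\in t_\uu^\I(\T)$, and since passing to the limit $r\to\infty$, $l=1$ in~\eqref{finiteestimation} gives $x^\flat+y=1$, we conclude $\overline{x^\flat}=-\bar y\in t_\uu^\I(\T)$. (c) Otherwise $S_b\notin\I\cup\I^*$; using $S_b\subseteq S$ and $S\setminus S_b\in\I$, one checks that $S\notin\I\cup\I^*$ as well, hence $S$ is infinite and non-cofinite and $S_b$ is infinite. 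Since $\ix\&\iiix$ translates to $\rho(S)\cup\lambda(S)\in\I$, Corollary~\ref{123}(3) directly produces $\overline{x^\flat}\in t_\uu^\I(\T)$.

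The substantive part of the argument has already been absorbed into Corollary~\ref{123}(3), which rests on the atomic decomposition $x^\flat=\alpha^{(l)}(x)-\alpha^{(r)}(x)$ and on Proposition~\ref{Propo:New}: in that setting, $\ix$ and $\iiix$ are exactly the conditions that place the supports $\rho(S)$ and $\lambda(S)-1$ of the two atomic components into $\I$, making each of $\overline{\alpha^{(l)}(x)}$ and $\overline{\alpha^{(r)}(x)}$ topologically $\uu_\I$-torsion. The present theorem is therefore a short organizational argument; the one minor verification needed is that $1-x^\flat$ admits a valid canonical representation in case (b), which reduces to the infinitude of $S_b$ ensured by the properness of $\I$.
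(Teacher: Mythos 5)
Your proof is correct and follows essentially the same route as the paper's: reduce to $\overline{x^\flat}$ via $\iix$ and Proposition~\ref{Coro:May11}, dispose of the degenerate cases by Lemma~\ref{Lemma2.2} (resp.\ the complement trick of Corollary~\ref{Coro1:May14}, which you reprove inline), and settle the main case by Corollary~\ref{123}(3). The only difference is cosmetic: you partition according to the position of $S_b$ relative to $\I$ rather than according to whether $S$ is finite, cofinite, or neither, and your verification that the third case meets the hypotheses of Corollary~\ref{123}(3) is if anything slightly more explicit than the paper's.
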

\begin{proof} 
Assume that $x\in[0,1)$, with $S=\supp(x)$ and $S_b=\supp_b(x)$, satisfies  $\ix \&\iix \&\iiix$. In view of $\iix$ one has $x \equiv_\I x^\flat$, hence $\bar x \in t_\uu^\I(\T)$ if and only if $\overline{x^\flat} \in t_\uu^\I(\T)$ by Proposition~\ref{Coro:May11}. So, it remains to verify that $\overline{x^\flat} \in t_\uu^\I(\T)$. This is clear when $S$ is finite. When $S$ is cofinite, $S_b\subseteq_\I\N$, that is, $S_b\in\I^*$, and so $\overline{x^\flat} \in t_\uu^\I(\T)$ by Corollary~\ref{Coro1:May14}.
If $S$ is neither finite nor cofinite, $\overline{x^\flat}\in t_\uu^\I(\T)$ follows from $\ix \&\iiix$ by Corollary~\ref{123}(3).
\end{proof}

The next example shows that the implication $\bar{x}\in t_\uu^\I(\T) \Rightarrow \Tx $ fails in the strongest possible way with $\I=\F in$.



\begin{example}\label{ce}{\rm[$\bar{x}\in t_\uu^\I(\T) \not\Rightarrow \Tx$]}
Let $\uu=((n+1)!)$ and $x=\sum_{n=1}^\infty \frac{1}{(2n)!}$.
Then $S=\supp(x)=2\N+1$,  $(S+1)\cap S=\emptyset$ and $S_b = \emptyset$, so $\ix$, $\iix$ and $\iiix$ simultaneously fail. On the other hand, $\bar x\in t_\uu(\T)$ as $\lim\limits_{n\to\infty}\varphi\left(\frac{c_n}{n+1}\right)=\lim\limits_{n\in 2\N+1}\varphi\left(\frac{1}{n+1}\right)\to 0$, by Theorem~\ref{Prob:A}.
\end{example}

In view of Lemma~\ref{Lemma2.2} and Corollary~\ref{Coro1:May14},  for $x\in [0,1)$ with $\supp(x)= \supp_b(x)$, if either $\supp(x) \in \I$ or $\supp(x)^* \in \I$, then $\bar x \in t_\uu^\I(\T)$. This motivates the question of whether the converse implication holds true 
when  $\supp(x)$ is $b$-bounded (it holds true for $\I=\F in$ by~\cite[Corollary~3.2]{DI}). The next example shows that this implication fails even for $\I = \I_d$.

\begin{example}\label{NoWC}
Let $(g_n)$ be the sequence of perfect squares (i.e., $g_n = n^2$ for $n\in \N$) and put $X= \bigcup_{n\in\N} [g_{2n}, g_{2n+1}]_\N$, so $X^*=  \bigcup_{n\in\N_+} (g_{2n-1}, g_{2n})_\N$.
We verify first that $d(X) = d(X^*)=1/2> 0$. To this end pick $k_n = \lfloor \frac{\sqrt{n}-1}{2}\rfloor\in \N$ (i.e., 
such that $(2k_n+1)^2 \leq n < (2k_n+3)^2$) for every $n \in \N_+$. 
Hence, 
\begin{equation}\label{L1}
\lim\limits_{n\to\infty}\frac{n}{(2k_n+1)^2}=1 \quad \text{and}\quad \lim\limits_{n\to\infty}\frac{n}{(2k_n+3)^2}=1.
\end{equation}
The choice of $k_n$ gives $|X((2k_n+1)^2)| \leq |X(n)| < |X((2k_n+3)^2)|$, hence
\begin{equation}\label{E2}
\frac{(2k_n+1)^2}{n} \cdot \frac{|X((2k_n+1)^2)|}{(2k_n+1)^2} \leq  \frac{|X(n)|}{n} <   \frac{(2k_n+3)^2}{n} \cdot \frac{|X((2k_n+3)^2)|}{(2k_n+3)^2}.
\end{equation}
Since, $|X((2k_n+1)^2)| =\sum_{i=0}^{k_n} [(2i+1)^2-(2i)^2+1] =\sum_{i=0}^{k_n}(4i+2)=2k_n^2+4k_n+2,$ we deduce that 
\begin{equation*}
\lim\limits_{n\to\infty}\frac{|X((2k_n+1)^2)|}{(2k_n+1)^2}=\lim\limits_{n\to\infty}\frac{2k_n^2+4k_n+2}{(2k_n+1)^2} =\frac{1}{2}. 
\end{equation*}
Similarly, $\lim\limits_{n\to\infty}\frac{|X((2k_n+3)^2)|}{(2k_n+3)^2}=\frac{1}{2}$. By these two limits and~\eqref{L1}, the first and the third member of the inequality~\eqref{E2} tend to $1/2$. This gives $d(X)=d(X^*)=1/2$, so $X\not \in \I_d$ and  $X^* \not \in \I_d$. 

Clearly, also $S=X\cap\N_+$ satisfies $d(S)=d(S^*)=1/2$. Fix arbitrarily a $b$-bounded $\uu\in\mathcal A$ 
and let $x\in [0,1)$ with $\supp(x) = \supp_b(x) = S$. Clearly, $\iix$ holds. Moreover, $\ix$ holds, as $\rho(S)= \{g_{2n+1}: n \in \N\} $, so $d(\rho(S))= 0$. Similarly, $\iiix$ holds, as $\lambda(S)=\{g_{2n}: n \in \N\}$, so $d(\lambda(S))=0$.

Therefore $\Tx$ holds, hence $\bar x \in t_\uu^{\I_d}(\T)$ by Theorem~\ref{in}, while $S \not  \in \I$ and $S^* \not  \in \I$.
\end{example}

\section{When $\supp(x)$ is $b$-bounded}\label{suppbbounded}

We start this section by proving that under the assumption that $\supp(x)$ is $b$-bounded, $\ix$ and $\iix$ are necessary conditions for $\bar x$ to be a topologically $\uu_\I$-torsion element. Then, we find a characterization of the topologically $\uu_\I$-torsion elements with $b$-bounded support as those with $\Ax$.


\begin{lemma}\label{Necessity2.12*}
{\rm [$\bar x\in t_\uu^\I(\T)  \Rightarrow \axuno 
\Leftrightarrow \ix\&\iix$]} 
Let $\uu\in\mathcal A$, let $\I$ be a translation invariant free ideal of $\N$ and $x\in[0,1)$ with $S=\supp(x)$ and $S_b=\supp_b(x)$.
If $S$ is $b$-bounded, then $\ix\&\iix\Leftrightarrow\axuno$. In particular, $\bar x\in t_\uu^\I(\T)  \Rightarrow \ix\&\iix$. 
 \end{lemma}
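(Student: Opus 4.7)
The plan is to use what has already been set up: the implication $\ix\&\iix\Rightarrow\axuno$ is Proposition~\ref{primo} (it holds in full generality, without any $b$-boundedness assumption), so only the reverse implication $\axuno\Rightarrow\ix\&\iix$ needs proof in the $b$-bounded case, and then the ``in particular'' clause follows by quoting Theorem~\ref{conjecture}.

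First I would get $\axuno\Rightarrow \ix$. Since $S$ is $b$-bounded, so is $\rho(S)\subseteq S$, and Lemma~\ref{rem:nested*} immediately yields $\rho(S)\in\I$, i.e.\ $\ix$.

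Next, to derive $\iix$ from $\axuno$, the key (and essentially only) move is to test $\axuno$ on $A:=S$ itself. If $S\in\I$, then $\iix$ is automatic since $S_b\subseteq S$ always. Otherwise $A=S\in\P(\N)\setminus\I$ is $b$-bounded and trivially satisfies $A\subseteq^\I S$, so $\axuno$ delivers $A\subseteq^\I S_b$, which is precisely $S\setminus S_b\in\I$; combined with the trivial inclusion $S_b\subseteq S$, this is $\iix$.

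For the ``in particular'' clause, Theorem~\ref{conjecture} asserts that $\bar x\in t_\uu^\I(\T)$ forces $\ax$ to hold for every $b$-bounded $A\in\P(\N)\setminus\I$; in particular $\axuno$ holds, so the equivalence just established gives $\ix\&\iix$.

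There is no serious obstacle here: everything reduces to the correct choice of the test set $A=S$ (which is legitimate exactly because $S$ is $b$-bounded) and to invoking the previously proved Lemma~\ref{rem:nested*}, Proposition~\ref{primo}, and Theorem~\ref{conjecture}. The only point worth isolating is the trivial separation of the case $S\in\I$, without which one cannot feed $A=S$ into $\axuno$.
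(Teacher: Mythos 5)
Your proof is correct and follows essentially the same route as the paper's: Proposition~\ref{primo} for one direction, Lemma~\ref{rem:nested*} for $\ix$, the test set $A=S$ (with the trivial case $S\in\I$ separated) for $\iix$, and Theorem~\ref{conjecture} for the final assertion. No differences worth noting.
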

\begin{proof}
By Proposition~\ref{primo}, $\ix\&\iix\Rightarrow \axuno$. To prove the converse implication $\axuno\Rightarrow\ix\&\iix$,  assume $\axuno$. Lemma~\ref{rem:nested*} shows that $\ix$ holds. We have to prove $\iix$, that is, $S_b\subseteq_\I S$. This is clear when $S\in\I$. If $S\not\in\I$, 
then $\axuno$ with $A=S$ implies immediately that $S_b\subseteq_\I S$.

For the last assertion, $\bar x\in t_\uu^\I(\T)$ implies $\ax$, by Theorem~\ref{conjecture}. Then the first part of the lemma applies.
\end{proof}

If $S\in\I$, then $b$-boundedness of $S$ is not necessary in Lemma~\ref{Necessity2.12*}, since in this case all properties involved ($\ix\&\iix,\axuno$ and $\bar x\in t_\uu^\I(\T)$) are vacuously true. 
Moreover, Example~\ref{ce} shows that the implication in the final assertion of the lemma cannot be inverted in general. 

\smallskip
Before proving Theorem~\ref{Nuovo:Th}, it is worth recalling from Remark~\ref{auto} that if $\supp(x)\in\I$, then $\Tx$ is automatically satisfied.  Moreover, if $\supp(x)\in\I$ is $b$-bounded, also $\axuno$ and $\axdue$ are obviously satisfied. 

\begin{corollary}\label{Nuovo:Th-old}
Let $\uu\in\A$, $\I$ a translation invariant free $P$-ideal of $\N$ and $x\in[0,1)$. If $S=\supp(x)$ is $b$-bounded, then $\bar x\in t_\uu^\I(\T)$ if and only if $\Ax$ holds.
\end{corollary}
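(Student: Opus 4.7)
The plan is to invoke Theorem~\ref{conjecture} in both directions, using that the $b$-boundedness of $S$ makes half of the work degenerate.

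For the necessity $\bar x\in t_\uu^\I(\T)\Rightarrow\Ax$, I would apply Theorem~\ref{conjecture} to get $\ax$, which already yields $\axdue$. Then Lemma~\ref{Necessity2.12*} (which precisely uses that $S$ is $b$-bounded to turn $\axuno$ into $\ix\&\iix$) delivers the remaining two conditions, so $\Ax=\ix\&\iix\&\axdue$ holds.

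For the sufficiency $\Ax\Rightarrow\bar x\in t_\uu^\I(\T)$, I would again verify the two clauses $\ax$ and $\bx$ of Theorem~\ref{conjecture}. The clause $\axuno$ comes for free from $\ix\&\iix$ by Proposition~\ref{primo}, and $\axdue$ is part of $\Ax$; hence $\ax$ holds. The key observation, which I expect to be the only substantive point in the whole proof, is that $\bx$ is essentially vacuous under the hypothesis ``$S$ is $b$-bounded'': if $A\in\P(\N)\setminus\I$ is $b$-divergent, then $A\cap S$ must be finite (an infinite $A\cap S\subseteq S$ would produce an infinite $b$-bounded subset of $A$, contradicting $b$-divergence of $A$), so $A\cap S\in\F in\subseteq\I$. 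Thus $B:=A\setminus S\subseteq_\I A$, and since $c_n=0$ for every $n\in B$, we trivially get $\lim_{n\in B}\varphi\!\left(\frac{c_n}{b_n}\right)=0$. So $\bx$ holds, and Theorem~\ref{conjecture} yields $\bar x\in t_\uu^\I(\T)$.

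No serious obstacle is expected: the heavy machinery has been packaged in Theorem~\ref{conjecture}, Proposition~\ref{primo} and Lemma~\ref{Necessity2.12*}, and the only new input specific to the $b$-bounded case is the triviality argument for $\bx$ sketched above. One small care point is to handle the degenerate sub-cases ($S\in\I$ or $A=\emptyset$) implicitly, since they make all conditions vacuous; no case analysis in terms of $S$ finite versus cofinite versus neither is needed with this approach, which is why I prefer it over a detour through the $\flat$-truncation and the atomic components $\alpha^{(l)}(x),\alpha^{(r)}(x)$.
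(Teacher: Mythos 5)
Your proposal is correct and follows essentially the same route as the paper: reduce to Theorem~\ref{conjecture}, observe that $\bx$ is automatically satisfied when $S$ is $b$-bounded, and pass between $\axuno$ and $\ix\&\iix$ via Lemma~\ref{Necessity2.12*} (resp.\ Proposition~\ref{primo}). The only difference is that you spell out the verification that $\bx$ is vacuous (finiteness of $A\cap S$ and $c_n=0$ on $A\setminus S$), which the paper merely asserts; that argument is correct.
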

\begin{proof}
By Theorem~\ref{conjecture}, $\bar x\in t_\uu^\I(\T)$ is equivalent to $\ax=\axuno\&\axdue$, since $\bx$ is automatically satisfied under the assumption that $S$ is $b$-bounded. By Lemma~\ref{Necessity2.12*}, $\axuno$ is equivalent to $\ix\&\iix$. 
\end{proof}


\begin{proof}[\bf Proof of Theorem~\ref{Nuovo:Th}]
 We have to prove that if $\uu \in \A$ and $\I$ is a translation invariant free $P$-ideal of $\N$ and $x \in [0,1)$ with $S=\supp(x)$ $b$-bounded mod $\I$, then $\bar x\in t_\uu^\I(\T)$ if and only if $\Ax$ holds.

By our hypothesis, there exists a $b$-bounded set $A\subseteq_\I S$. Let $y=\sum_{n\in A}\frac{c_n}{u_n}$, hence $\supp(y)$ is $b$-bounded and $x\equiv_\I y$.
Then $\bar x\in t_\uu^\I(\T)$ is equivalent to $\bar y\in t_\uu^\I(\T)$ by Proposition~\ref{Coro:May11}. Now, Corollary~\ref{Nuovo:Th-old} implies that the latter is equivalent to $\mathtt{A}_y$. It remains to recall that  $\mathtt{A}_y$ and $\Ax$ are equivalent, in view of Remark~\ref{Iae}.
\end{proof}


Now we see that $\axdue \Leftrightarrow \iiix$ and $\Ax\Leftrightarrow\Tx$ under the assumption  that $\lambda(S)-1$ is $b$-bounded. 

\begin{proposition}\label{3i} {\rm[$\axdue \Leftrightarrow \iiix$]} 
Let $\uu\in\mathcal A$, let $\I$ be a translation invariant free ideal of $\N$ and let $x\in[0,1)$ with $S=\supp(x)$. If $\lambda(S)-1$ is $b$-bounded, then $\axdue \Leftrightarrow \iiix$, and so $\Ax\Leftrightarrow\Tx$. 
\end{proposition}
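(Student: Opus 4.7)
The plan is to deduce Proposition~\ref{3i} from two ingredients already established in the paper, with no substantial new work required. The implication $\iiix \Rightarrow \axdue$ holds unconditionally by Proposition~\ref{primo}, with no assumption whatsoever on $\lambda(S)-1$. So one direction of the equivalence is free.

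For the converse $\axdue \Rightarrow \iiix$, the natural approach is to reuse the argument underlying Lemma~\ref{lambda-1}: this is precisely where the standing hypothesis that $\lambda(S)-1$ is $b$-bounded is consumed. Concretely, I would argue by contradiction. Assume $\axdue$ holds and suppose $\iiix$ fails, i.e.\ $\lambda(S)\notin\I$. By translation invariance of $\I$, also $A:=\lambda(S)-1\notin\I$. By hypothesis $A$ is $b$-bounded, and the defining equality $\lambda(S)=S\setminus(S+1)$ forces $A\cap S=\emptyset\in\I$ (every $n\in\lambda(S)-1$ satisfies $n+1\in S$ and $n\notin S$). Applying the reformulation $\dageq$ of $\axdue$ supplied by Lemma~\ref{ax2p} to the set $A$, I obtain $(A+1)\cap S=\lambda(S)\cap S=\lambda(S)\in\I$, contradicting $\lambda(S)\notin\I$. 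Hence $\iiix$ holds.

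The second assertion $\Ax\Leftrightarrow\Tx$ is then a purely formal consequence: by definition $\Ax=\ix\,\&\,\iix\,\&\,\axdue$ and $\Tx=\ix\,\&\,\iix\,\&\,\iiix$, so substituting the equivalent condition just proved yields the claim.

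The only point requiring a moment's care is that Proposition~\ref{3i} is stated for a translation invariant free ideal, while Lemma~\ref{lambda-1} is formally stated for a translation invariant free $P$-ideal. However, inspection shows that the $P$-ideal hypothesis in Lemma~\ref{lambda-1} is used only for the final ``in particular'' clause (via Theorem~\ref{conjecture}); the implication $\axdue\Rightarrow\iiix$ itself relies only on translation invariance and on Lemma~\ref{ax2p}, which is valid for arbitrary free ideals. So there is no real obstacle, and the reduction is legitimate under the weaker hypothesis of Proposition~\ref{3i}.
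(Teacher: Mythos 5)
Your proof is correct and follows essentially the same route as the paper's: one direction is Proposition~\ref{primo}, the other is exactly the argument of Lemma~\ref{lambda-1} (which you re-derive inline via $\dageq$ and Lemma~\ref{ax2p} rather than citing), and $\Ax\Leftrightarrow\Tx$ is then formal. Your remark that the $P$-ideal hypothesis of Lemma~\ref{lambda-1} is only needed for its ``in particular'' clause, not for $\axdue\Rightarrow\iiix$, is accurate and in fact slightly more careful than the paper, which cites that lemma without addressing the mismatch in hypotheses.
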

\begin{proof} By Proposition~\ref{primo}, $\iiix\Rightarrow\axdue$, so in order to prove that  $\Ax\Leftrightarrow\Tx$, it is enough to check that $\axdue\Rightarrow \iiix$, and this is verified in Lemma~\ref{lambda-1} under the assumption that $\lambda(S)-1$ is $b$-bounded.
\end{proof}

The equivalence $\bar x\in t_\uu^\I(\T)\Leftrightarrow \Tx$ can be achieved under a stronger assumption of $b$-boundedness (compared to Theorem~\ref{Nuovo:Th}):

\begin{corollary}\label{b-bounded}{\rm[$\bar x\in t_\uu^\I(\T)\Leftrightarrow\ax \Leftrightarrow \Tx$]} 
Let $\uu\in\A$, let $\I$ be a translation invariant free $P$-ideal of $\N$ and $x\in[0,1)$. 
If $S\sqcup(\lambda(S)-1)$ is $b$-bounded, then $\bar x\in t_\uu^\I(\T)$ if and only if $\Tx$ holds. 
\end{corollary}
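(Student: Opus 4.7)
The plan is to combine three results already established in the excerpt: the sufficiency of $\Tx$ (Theorem~\ref{in}), the necessity of $\Ax$ when $\supp(x)$ is $b$-bounded mod $\I$ (Theorem~\ref{Nuovo:Th}), and the upgrade $\axdue \Leftrightarrow \iiix$ available under $b$-boundedness of $\lambda(S)-1$ (Proposition~\ref{3i}).

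For the sufficiency direction, assuming $\Tx$, Theorem~\ref{in} immediately yields $\bar x\in t_\uu^\I(\T)$; this implication uses no hypothesis on $b$-boundedness, so nothing else is needed.

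For the necessity, assume $\bar x \in t_\uu^\I(\T)$. The hypothesis that $S \sqcup (\lambda(S)-1)$ is $b$-bounded decomposes into two useful pieces: $S$ itself is $b$-bounded (and hence trivially $b$-bounded mod $\I$), and $\lambda(S)-1$ is $b$-bounded. The first piece lets me apply Theorem~\ref{Nuovo:Th} to conclude that $\Ax$ holds, i.e., $\ix$, $\iix$, and $\axdue$ all hold. The second piece lets me apply Proposition~\ref{3i}, which under the assumption that $\lambda(S)-1$ is $b$-bounded gives the equivalence $\axdue \Leftrightarrow \iiix$; thus $\axdue$ upgrades to $\iiix$, completing $\Tx$.

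There is no real obstacle here: the statement is essentially the result of assembling the three named ingredients, and the role of the stronger $b$-boundedness hypothesis ($S\sqcup(\lambda(S)-1)$ rather than just $S$) is precisely to license the passage from $\axdue$ to $\iiix$ via Proposition~\ref{3i}, which is unavailable under the weaker hypothesis of Theorem~\ref{Nuovo:Th} alone.
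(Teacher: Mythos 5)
Your proposal is correct and follows essentially the same route as the paper, which likewise splits the hypothesis into ``$S$ is $b$-bounded'' (to invoke Theorem~\ref{Nuovo:Th} and get the equivalence with $\Ax$) and ``$\lambda(S)-1$ is $b$-bounded'' (to invoke Proposition~\ref{3i} and upgrade $\Ax$ to $\Tx$). The only cosmetic difference is that you handle the sufficiency direction by citing Theorem~\ref{in} directly, whereas the paper's first proof just chains the two stated equivalences; both are valid.
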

\begin{proof}
By Theorem~\ref{Nuovo:Th}, $\bar x\in t_\uu^\I(\T)$ is equivalent to $\Ax$. Now, $\Ax$ is equivalent to $\Tx$ in view of Proposition~\ref{3i}.

A second proof can be given without making recurse to Theorem~\ref{Nuovo:Th}.
Indeed, that $\bar x\in t_\uu^\I(\T)$ implies $\ax$ comes from Theorem~\ref{conjecture}, while $\ax\Rightarrow\Tx$ is given by Lemma~\ref{Necessity2.12*} and Proposition~\ref{3i}, and $\Tx\Rightarrow\bar x\in t_\uu^\I(\T)$ by Theorem~\ref{in}. 
\end{proof}

In particular the equivalence from Corollary~\ref{b-bounded} holds for $b$-bounded arithmetic sequences:

\begin{corollary}\label{b-boundedreal}
Let $\uu\in\A$, let $\I$ be a translation invariant free $P$-ideal of $\N$ and $x\in[0,1)$. If $\uu$ is $b$-bounded, then $\bar x\in t_\uu^\I(\T)$ if and only if $\Tx$ holds.
\end{corollary}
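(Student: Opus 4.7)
The plan is to derive Corollary~\ref{b-boundedreal} as an immediate specialization of Corollary~\ref{b-bounded}, which already establishes the equivalence $\bar x\in t_\uu^\I(\T)\Leftrightarrow\Tx$ under the weaker hypothesis that $S\sqcup(\lambda(S)-1)$ is $b$-bounded (with $S=\supp(x)$). All that is needed is to check that the blanket assumption ``$\uu$ is $b$-bounded'' forces this set-theoretic condition for \emph{every} $x\in[0,1)$.

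By definition $\uu\in\A$ is $b$-bounded exactly when $\N_+$ is $b$-bounded, i.e., the entire sequence $(b_n)_{n\in\N_+}$ is bounded. Consequently every subset $A\subseteq\N_+$ is $b$-bounded, since $(b_n)_{n\in A}$ inherits the same bound. Hence, for any $x\in[0,1)$ with $S=\supp(x)$, both $S\subseteq\N_+$ and $\lambda(S)-1\subseteq\N$ are $b$-bounded (the latter after discarding the possible point $0$, which does not affect $b$-boundedness). Moreover, since $\lambda(S)=S\setminus(S+1)$ is by construction disjoint from $S+1$, the set $\lambda(S)-1$ is disjoint from $S$, so $S\sqcup(\lambda(S)-1)$ is a genuine disjoint union and, being the union of two $b$-bounded sets, is itself $b$-bounded.

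With the hypothesis of Corollary~\ref{b-bounded} verified for every $x\in[0,1)$, the desired equivalence $\bar x\in t_\uu^\I(\T)\Leftrightarrow\Tx$ follows at once. No real obstacle arises: the argument is pure bookkeeping, and the genuine content has already been absorbed into Theorem~\ref{Nuovo:Th}, Proposition~\ref{3i} and Theorem~\ref{in} (from which Corollary~\ref{b-bounded} was deduced). Should one prefer to avoid explicit recurse to Corollary~\ref{b-bounded}, the same three ingredients can be invoked directly: Theorem~\ref{in} supplies the implication $\Tx\Rightarrow\bar x\in t_\uu^\I(\T)$, while for the converse Theorem~\ref{conjecture} yields $\ax$, Lemma~\ref{Necessity2.12*} upgrades $\axuno$ to $\ix\&\iix$ (using $b$-boundedness of $S$), and Proposition~\ref{3i} upgrades $\axdue$ to $\iiix$ (using $b$-boundedness of $\lambda(S)-1$).
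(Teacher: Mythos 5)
Your proposal is correct and matches the paper's own (implicit) argument: Corollary~\ref{b-boundedreal} is stated there as the specialization of Corollary~\ref{b-bounded} to $b$-bounded $\uu$, and your verification that $S\sqcup(\lambda(S)-1)$ is automatically a disjoint $b$-bounded union in this case is exactly the bookkeeping needed. The alternative direct route you sketch via Theorem~\ref{in}, Lemma~\ref{Necessity2.12*} and Proposition~\ref{3i} is likewise the second proof the paper records inside Corollary~\ref{b-bounded}.
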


\section{Nested ideals}\label{nested}

 In this section we introduce nested ideals and see that all ideals $\I_\alpha$ have this property. On the other hand, in \S\ref{Countersec} we find examples of translation invariant analytic free $P$-ideals that are not nested, providing counterexamples to~\cite[Corollary 2.12]{Ghosh}.
Then, in \S\ref{Key} we prove that under the additional hypothesis on the ideal to be nested, the conclusion of~\cite[Corollary 2.12]{Ghosh} holds true (see Corollary~\ref{CoroGhosh}).

\smallskip
 Remark~\ref{rem:nested} motivates the following: 

\begin{definition}\label{nesteddef} 
A pair $((l_n),(r_n))$ of strictly increasing sequences in $\N$ is said to be {\em left nested} if  \eqref{(*)} holds, namely, if $l_n\leq r_n<l_{n+1}-1$ for every $n\in \N$. 
A free ideal $\I$ of $\N$ is said to be {\em nested} whenever for any left nested pair  $((l_n),(r_n))$ of sequences in $\N$, if $R=\{r_n:n\in\N\}\in \I$, then $L=\{l_n:n\in\N\} \in \I$.  
\end{definition}

\begin{example}\label{DenId:is:nested} 
Clearly, $\F in$ is a nested translation invariant analytic free $P$-ideal.

Moreover, for every $\alpha\in(0,1]$, also the ideal $\I_\alpha$  is a translation invariant analytic free $P$-ideal that is nested. This follows immediately from the fact that if $((l_n),(r_n))$ is a pair of left nested sequences in $\N$, and $L=\{l_n:n\in\N\}$, $R=\{r_n:n\in\N\}$, then $|L(l_n)|\leq |R(l_n)|+1$ for every $n\in\N$ (in fact, $|L(l_n)|=|R(l_n)|$ if $l_n=r_n$ and $|L(l_n)=|R(l_n)|+1$ if $l_n<r_n$).
\end{example}

In particular, by Example~\ref{DenId:is:nested} there are $\mathfrak c$ many nested translation invariant analytic free $P$-ideals of $\N$. By a result of Solecki~\cite{Sol2} there are $\mathfrak c$ many analytic ideals of $\N$.  On the other hand, we expect that a more careful rewriting of Example~\ref{counterexample*} would provide $\mathfrak c$ many non-nested translation invariant analytic free $P$-ideals of $\N$.

\smallskip
 The next is a useful equivalent condition for a translation invariant free ideal to be nested.

\begin{lemma}\label{equi}
A  translation invariant free ideal $\I$ of $\N$ is nested if and only if for every $S\subseteq \N$, $\rho(S)\in \I$ implies $\lambda(S)\in \I$.
\end{lemma}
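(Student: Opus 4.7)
The plan is to route both directions through the canonical interval decomposition of an infinite, non-cofinite subset of $\N$, which sets up a bijection between such sets and left nested pairs of sequences. Specifically, as in the proof of Claim~\ref{nesteddef*} (see also Remark~\ref{rem:nested}), any infinite non-cofinite $S\subseteq \N$ admits a unique representation $S = \bigcup_{n\in\N}[l_n,r_n]_\N$ with $(l_n),(r_n)$ strictly increasing and $l_n\leq r_n< l_{n+1}-1$, so the pair $((l_n),(r_n))$ is left nested; and from the definitions one reads off $\rho(S) = \{r_n:n\in\N\} =:R$ and $\lambda(S) = \{l_n:n\in\N\}=:L$. Once this correspondence is in hand, the equivalence is essentially a reformulation.

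For the forward direction, assume $\I$ is nested and let $S\subseteq \N$ satisfy $\rho(S)\in \I$. If $S$ is finite or cofinite, Claim~\ref{nesteddef*} gives that $\lambda(S)$ is finite, hence in the free ideal $\I$. Otherwise $S$ is infinite and non-cofinite, its canonical decomposition yields a left nested pair $((l_n),(r_n))$, and the hypothesis $R = \rho(S)\in \I$ combined with nestedness forces $L = \lambda(S)\in \I$.

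For the reverse direction, suppose the stated implication holds, and let $((l_n),(r_n))$ be an arbitrary left nested pair with $R = \{r_n:n\in\N\}\in \I$. Define $S:=\bigcup_{n\in\N}[l_n,r_n]_\N$; the conditions $l_n\leq r_n < l_{n+1}-1$ ensure $S$ is infinite and non-cofinite with canonical decomposition precisely $((l_n),(r_n))$. Therefore $\rho(S) = R\in \I$, so by hypothesis $\lambda(S) = L = \{l_n:n\in\N\}\in \I$, witnessing that $\I$ is nested.

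The argument is almost purely book-keeping; the only point that needs care is the verification that $\rho$ and $\lambda$ pick out exactly the right and left endpoints of the canonical decomposition, and that the finite/cofinite case is dispatched by Claim~\ref{nesteddef*}. Notably, translation invariance of $\I$ plays no role in the proof and is present only as the standing assumption of the lemma.
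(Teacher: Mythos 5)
Your proposal is correct and follows essentially the same route as the paper: the finite/cofinite case is dispatched via Claim~\ref{nesteddef*}, and both directions otherwise reduce to the correspondence between infinite non-cofinite sets and left nested pairs recorded in Remark~\ref{rem:nested}. Your closing observation that translation invariance is not used is also accurate.
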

\begin{proof} 
Assume that $\I$ is nested and pick an arbitrary $S\subseteq \N$.  If $S$ is finite or cofinite, then $\lambda(S)$ is finite and therefore belongs to $\I$. 
Otherwise, $\lambda(S)$ and $\rho(S)$ are infinite, so considered both as increasing one-to-one sequences they form a left nested pair, by Remark~\ref{rem:nested}.
The hypothesis $\rho(S)\in \I$ yields $\lambda(S)\in \I$, as $\I$ is a nested ideal. 

Vice versa, assume that for every $S\subseteq \N$, $\rho(S)\in \I$ implies $\lambda(S)\in \I$. To check that $\I$ is a nested ideal, pick a left nested pair $((l_n),(r_n))$ of sequences in $\N$, let $L=\{l_n: n\in\N\}$, $R=\{r_n:n\in\N\}$ and assume that $R\in \I$. By Remark~\ref{rem:nested}, there exists $S\subseteq \N$ having $\lambda(S)=L$ and $\rho(S)=R\in\I$. Therefore, $L=\lambda(S) \in\I$ by hypothesis. Hence, $\I$ is nested.
\end{proof}
%

So, for $\uu\in\A$, a nested translation invariant free ideal $\I$ of $\N$ and $x\in[0,1)$, $\ix\Rightarrow\iiix$; hence, $\ix\&\iix\equiv\Tx$.

\subsection{Examples of translation invariant analytic free $P$-ideals that are not nested}\label{Countersec}
In Example~\ref{counterexample*} we see that there are plenty of translation invariant analytic free $P$-ideals that are not nested.

\smallskip
We need the following notions. A \emph{submeasure} on $\N$ is a function $\varphi\colon \P(\N) \to \R_{\geq 0}\cup \{\infty\}$ such that $\varphi(\emptyset)=0$, and for all $A,B\in\P(\N)$, $\varphi(A \cup B) \leq \varphi(A)+\varphi(B)$ and $A \subseteq B$ implies $\varphi(A) \leq  \varphi(B)$.  
Moreover, a function $\varphi\colon M\to \R$, where $M$ is a topological space, is \emph{lower semicontinuous} if $x_n\to x$ in $M$ implies $\liminf_{n\to\infty}\varphi(x_n)\geq\varphi(x)$.

For any lower semicontinuous submeasure $\varphi$, one defines the \emph{exhaustive ideal} 
as follows
$$\mathrm{Exh}(\varphi):=\left\{A \subseteq \N: \lim\limits_{n\to\infty} \varphi(A \setminus[0,n]_{\N})=0\right\}.$$
For a sequence $\boldsymbol{\gamma}=(\gamma_n)$ of strictly positive real numbers with $\sum_{n\in\N}\gamma_n=\infty$, define 
$$\mu_{\boldsymbol{\gamma}}\colon\P(\N)\to\R_{\geq 0}\cup\{\infty\}, \quad \mu_{\boldsymbol{\gamma}}(A):=\sum_{n\in A}\gamma_n\ \text{for every $A\subseteq \N$}.$$
Let also $\I_{\boldsymbol{\gamma}}:=\{A\subseteq \N: \mu_{\boldsymbol{\gamma}}(A)<\infty\}.$
Note that $\I_{\boldsymbol{\gamma}}= \I_{\boldsymbol{\gamma'}}$ whenever the sequences ${\boldsymbol{\gamma}}$ and ${\boldsymbol{\gamma'}}$ eventually coincide (i.e., $\I_{\boldsymbol{\gamma}}$ does not vary if one alters finitely many members of the sequences ${\boldsymbol{\gamma}}$).

\begin{lemma}\label{ti2}
Let ${\boldsymbol{\gamma}}=(\gamma_n)$ be a sequence of strictly positive real numbers with $\sum_{n\in\N}\gamma_n=\infty$. 
Then:
\begin{itemize}
\item[(1)] $\I_{\boldsymbol{\gamma}}$ is an analytic free $P$-ideal;
\item[(2)] if $\frac{1}{2}\leq \frac{\gamma_{n+1}}{\gamma_n} \leq 2$ for every $n\in\N_+$, then $\I_{\boldsymbol{\gamma}}$ is translation invariant (hence, unboundedly convex).
\end{itemize}
\end{lemma}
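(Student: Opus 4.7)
The plan is to verify the requirements for (1) directly from properties of the submeasure $\mu_{\boldsymbol{\gamma}}$, and for (2) to exploit the geometric bound on consecutive ratios of $\boldsymbol{\gamma}$. That $\I_{\boldsymbol{\gamma}}$ is a free ideal is immediate: $\mu_{\boldsymbol{\gamma}}$ is monotone and subadditive, so $\I_{\boldsymbol{\gamma}}$ is closed under subsets and finite unions, while $\mu_{\boldsymbol{\gamma}}(F)<\infty$ for every finite $F\subseteq\N$ gives $\F in\subseteq\I_{\boldsymbol{\gamma}}$, and $\sum_{n\in\N}\gamma_n=\infty$ ensures $\N\notin\I_{\boldsymbol{\gamma}}$, so the ideal is proper. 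For the $P$-ideal property, given $\{A_m:m\in\N\}\subseteq\I_{\boldsymbol{\gamma}}$, I would use that each tail $\mu_{\boldsymbol{\gamma}}(A_m\setminus[0,k]_\N)$ tends to $0$ as $k\to\infty$, pick $k_m$ with $\mu_{\boldsymbol{\gamma}}(A_m\setminus[0,k_m]_\N)<2^{-m}$, and set $A:=\bigcup_{m\in\N}(A_m\setminus[0,k_m]_\N)$. Then $A_m\setminus A\subseteq[0,k_m]_\N$ is finite, so $A_m\subseteq^*A$, and subadditivity forces $\mu_{\boldsymbol{\gamma}}(A)\le\sum_m 2^{-m}<\infty$, whence $A\in\I_{\boldsymbol{\gamma}}$.

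For analyticity, I would write $\I_{\boldsymbol{\gamma}}=\bigcup_{N\in\N}\{A\in\P(\N):\mu_{\boldsymbol{\gamma}}(A)\le N\}$. Each level set is closed in $\P(\N)\cong\Z(2)^\N$ with the product topology, because $\mu_{\boldsymbol{\gamma}}(A)=\sup_k\sum_{n\in A\cap[0,k]_\N}\gamma_n$ is a supremum of continuous functions on $\P(\N)$, hence lower semicontinuous. Thus $\I_{\boldsymbol{\gamma}}$ is an $F_\sigma$ subset of $\P(\N)$ and, in particular, analytic.

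For (2), the hypothesis $\tfrac{1}{2}\le\gamma_{n+1}/\gamma_n\le 2$ yields, by an easy induction, $\gamma_{k+m}\le 2^{|m|}\gamma_k$ whenever both indices lie in $\N_+$. Hence, for $n\in\Z$ and $A\in\I_{\boldsymbol{\gamma}}$,
\[\mu_{\boldsymbol{\gamma}}((A+n)\cap\N)=\sum_{k\in A,\,k+n\ge 0}\gamma_{k+n}\le 2^{|n|}\,\mu_{\boldsymbol{\gamma}}(A)<\infty,\]
so $(A+n)\cap\N\in\I_{\boldsymbol{\gamma}}$, which is the required translation invariance. The ``hence, unboundedly convex'' clause then comes for free from the result of~\cite{BCsurvey} recorded in \S\ref{NT}. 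The only real subtlety in the whole argument is the diagonal choice of the cutoffs $k_m$ in the $P$-ideal step, where one combines the vanishing of tails of convergent series with the $2^{-m}$ geometric bound to push the pseudo-union $A$ back into $\I_{\boldsymbol{\gamma}}$; everything else reduces to monotonicity and subadditivity of $\mu_{\boldsymbol{\gamma}}$ together with the geometric growth constraint on $\boldsymbol{\gamma}$.
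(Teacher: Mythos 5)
Your proof is correct, but for part (1) it takes a more elementary and self-contained route than the paper. The paper's proof observes that $\mu_{\boldsymbol{\gamma}}$ is a lower semicontinuous submeasure, invokes Farah's result (\cite[Lemma~1.2.2]{Fa}) that $\mathrm{Exh}(\varphi)$ of such a submeasure is an $F_{\sigma\delta}$ $P$-ideal, and then identifies $\mathrm{Exh}(\mu_{\boldsymbol{\gamma}})$ with $\I_{\boldsymbol{\gamma}}$ via the equivalence $\mu_{\boldsymbol{\gamma}}(A)<\infty \Leftrightarrow \lim_{n}\mu_{\boldsymbol{\gamma}}(A\setminus[0,n]_\N)=0$. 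You instead verify everything by hand: the ideal axioms from monotonicity and subadditivity, the $P$-ideal property by the standard diagonal choice of cutoffs $k_m$ with tails bounded by $2^{-m}$ (which is sound, since $\mu_{\boldsymbol{\gamma}}$ is countably subadditive and the pseudo-union then has finite measure), and analyticity by writing $\I_{\boldsymbol{\gamma}}$ as a countable union of sublevel sets of the lower semicontinuous function $\mu_{\boldsymbol{\gamma}}$. Your approach buys independence from the external citation and in fact yields the sharper conclusion that $\I_{\boldsymbol{\gamma}}$ is $F_\sigma$ rather than merely $F_{\sigma\delta}$; the paper's approach is shorter and reuses machinery already set up for general exhaustive ideals. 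For part (2) your computation $\mu_{\boldsymbol{\gamma}}((A+n)\cap\N)\leq 2^{|n|}\mu_{\boldsymbol{\gamma}}(A)$ is exactly the verification the paper leaves to the reader (the only pedantic caveat is that the ratio bound is assumed only for $n\in\N_+$, but this affects finitely many terms and hence does not change the ideal, as the paper itself notes).
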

\begin{proof} 
(1) Clearly, $\mu_{\boldsymbol{\gamma}}$ is a lower semicontinuos submeasure, as $\mu_{\boldsymbol{\gamma}}$ is a submeasure and for all $A\in\mathcal P(\N)$, $\mu_{\boldsymbol{\gamma}}(A)=\lim\limits_{n\to\infty}\mu_{\boldsymbol{\gamma}}(A(n))$. Hence, $\mathrm{Exh}(\mu_{\boldsymbol{\gamma}})$ is an $F_{\sigma\delta}$-set and a $P$-ideal by~\cite[Lemma~1.2.2]{Fa}, therefore an analytic free $P$-ideal.
Moreover, $\mathrm{Exh}(\mu_{\boldsymbol{\gamma}})=\I_{\boldsymbol{\gamma}}$, as $\mu_{\boldsymbol{\gamma}}(A) < \infty$ if and only if $ \lim\limits_{n\to\infty} \mu_{\boldsymbol{\gamma}}(A \setminus[0,n]_{\N})=0$.

\smallskip
(2) It is easy to verify that for every $A\in\I_{\boldsymbol{\gamma}}$, also $A+1,A-1\in\I_{\boldsymbol{\gamma}}$. This implies that $\I_{\boldsymbol{\gamma}}$ is translation invariant.
\end{proof}

It is easy to see that the restraint $\frac{1}{2}\leq \frac{\gamma_{n+1}}{\gamma_n} \leq 2$ in item (2) can be relaxed to 
$c \leq \frac{\gamma_{n+1}}{\gamma_n} \leq C$, for any pair of  arbitrary positive constants $c< C$. 

\begin{example}\label{counterexample*} 
In order to define the sequence ${\boldsymbol{\gamma}}=(\gamma_n)_{n\in \N_+}$, fix a real number $q\in (1/2, 1)$ and 
consider a sequence ${\boldsymbol{\alpha}}=(\alpha_n)_{n\in\N}$ of positive real numbers and  such that
$\frac{1}{2}\leq \frac{\alpha_{n+1}}{\alpha_n} \leq q$. 
This ensures  
\begin{equation}\label{Eq:15Sept}
\sum_{n\in\N} \alpha_n\leq \sum_{n\in\N} q^n < \infty. 
\end{equation}
 Next we define a partition $\N_+=  \bigsqcup_{n=0}^\infty B_n$ into intervals $B_n$, defined as follows. Let  
\begin{equation}\label{Eq:18Sept}
w_n = n^2 + 1, \  z_n=n^2 + n +1 \mbox{ and } B_n =  [w_n, w_{n+1}-1]_\N  \mbox{ for each $n\in\N$},
\end{equation}
so that 
\begin{equation}\label{Eq:18Sept*}
w_n = n^2 + 1 < z_n = n^2 + n +1 < (n+1)^2 = w_{n+1} -1\quad\text{and}\quad z_n \in B_n\ \text{for each}\ n\in\N_+.
\end{equation}
Moreover,  $B_0 = \{1\}$ is degenerate and $\N_+=  \bigsqcup_{n=0}^\infty B_n$.
The range of the sequence ${\boldsymbol{\gamma}}$ is the set $A:= \{\alpha_n: n \in \N_+\}$ in $\R$ and the function $\gamma\colon \N_+ \to A$, $n\mapsto\gamma_n$, is (piecewise) defined as follows. For $n\in \N$,  the restriction of $\gamma$ to the interval $B_n$ is defined by 
$$ \gamma_m = \begin{cases} \alpha_{n-m+w_{n}},\ \text{if}\ w_{n}\leq m\leq z_n\\ \alpha_{m-n-w_{n}},\ \text{if}\ z_n\leq m\leq w_{n+1}-1 \!\!\!\!\end{cases}\!\!, \ \  m\in  B_n = [w_n, w_{n+1}-1]_\N. $$
In particular, $\gamma_1 =\alpha_0$ as $B_0= \{1\}$ is degenerate. Put $W=\{w_n:n\in\N\}$ and $Z=\{z_n:n\in\N\}$.
Roughly speaking, the graph of $\gamma\colon \N_+ \to A$ is ``wave-like", where the local minima are given by the restriction $\gamma\restriction_W\colon W \to A$, defined by $\gamma(w_n) = \alpha_{n}$ for every $n\in\N$. The values of all local maxima are equal to $1$, attained  precisely in $Z$. Moreover, $\gamma$ is increasing on the first half $[w_{n}, z_n]_\N$ of $B_n$ and decreasing on its second half $[z_n, w_{n+1}-1]_\N$. 

The following diagram, where, for the sake of brevity, we put $k_n=w_{n+1}-1$ for every $n\in\N$, may help to understand better the sequence ${\boldsymbol{\gamma}}$: 
\begin{align*}
&\underbrace{\alpha_0=1}_{=\gamma_{w_0}=\gamma_{z_0}=\gamma_{k_0}},\\
\underbrace{\alpha_1}_{=\gamma_{w_1}},& \underbrace{\alpha_0=1}_{=\gamma_{z_1}},\underbrace{\alpha_1}_{=\gamma_{k_1}},\\
\underbrace{\alpha_2}_{=\gamma_{w_2}},\alpha_1,
&\underbrace{\alpha_0=1}_{=\gamma_{z_2}}, \alpha_1, \underbrace{\alpha_2}_{=\gamma_{k_2}},\\ 
& \vdots\\
\underbrace{\alpha_n}_{=\gamma_{w_n}}, \alpha_{n-1},\dots,\alpha_2,\alpha_{1}, &\underbrace{\alpha_0=1}_{=\gamma_{z_n}}, \alpha_1,\alpha_2,\dots,\alpha_{n-1},\underbrace{\alpha_n}_{=\gamma{k_n}}, \\
\underbrace{\alpha_{n+1}}_{=\gamma_{w_{n+1}}},\dots, \dots,  \dots, \dots, \dots,  & \underbrace{\alpha_0=1}_{=\gamma_{z_{n+1}}}, \dots, \dots,  \dots, \dots, \dots, \underbrace{\alpha_{n+1}}_{=\gamma_{k_{n+1}}}\\
&\vdots
\end{align*}

(a) As $\sum_{n\in\N}\gamma_n=\infty$ and $\frac{1}{2}\leq \frac{\gamma_{n+1}}{\gamma_n} \leq 2$ for every $n\in\N$, the ideal $\I_{\boldsymbol{\gamma}}$ is a translation invariant analytic free $P$-ideal by Lemma~\ref{ti2}.

\smallskip
(b) The pair $((z_n),(w_{n+1}))$ is left nested  in view of~\eqref{Eq:18Sept*} (as $z_n \leq w_{n+1} < z_{n+1} - 1$ for all $n\in \N_+$). Using also~\eqref{Eq:15Sept}, we obtain.
$$\mu_{\boldsymbol{\gamma}}(W)=\sum_{n\in\N}\gamma_{w_n}=\sum_{n\in\N}\alpha_n\leq \sum_{n\in\N}q^{n}
< \infty,\quad\text{while}\quad \mu_{\boldsymbol{\gamma}}(Z)=\sum_{n\in\N}\gamma_{z_n}=\sum_{n\in\N}1=\infty.$$ 
Hence, $W\setminus\{w_0\}=\{w_{n+1}:n\in\N\}\in\I_{\boldsymbol{\gamma}}$ and $Z=\{z_n:n\in\N\}\notin\I_{\boldsymbol{\gamma}}$ witness that $\I_{\boldsymbol{\gamma}}$ is not nested. 
\end{example} 

As an application, we get the following example of a sequence $\uu\in \A$, a proper  translation invariant analytic free $P$-ideal $\I$ of $\N$ and $x\in [0,1)$ 
with $b$-bounded support, such that  $\Ax$ holds, but $\Tx$ fails. 
In particular, this shows that $\bar x\in t_\uu^\I(\T)$ does not imply $\Tx$ even under the assumption that $\supp(x)$ is $b$-bounded.

\begin{example}\label{Exa:2:osserv} {\rm[$\Ax\not\Rightarrow\Tx$]}
Let $\I=\I_{\boldsymbol{\gamma}}$ be a proper translation invariant analytic free $P$-ideal of $\N$ which is not nested. 

By Lemma~\ref{equi}, there is a set $S\subseteq \N$ such that $\rho(S)\in\I$ and $\lambda(S)\not\in\I$. Since $S$ contains $\lambda(S)$, so $S\notin \I$. Similarly, 
using that $S^* \supseteq \lambda(S)-1\notin\I$ as $\lambda(S)\not\in\I$ and $\I$ is translation inviariant, we deduce that  $S^*\notin\I$. In particular, both $S$ and $S^*$ are infinite.

In order to define $\uu\in\A$, for every $n\in S$, let $b_n=2$, and for every $n\in S^*$, let $b_n=2^n$. Furthermore, let $x=\sum_{i\in S}\frac{1}{u_i}$. Then $S=\supp(x)=\supp_b(x)$ is $b$-bounded, so $\iix$ is satisfied. Moreover, $\ix$ is satisfied while $\iiix$ (hence $\Tx$ as well) fails, by our choice of $S$. 

Next we verify that $\axdue$ is satisfied, and hence also $\Ax$.
Let $A\in\P(\N)\setminus \I$ be $b$-bounded with $A\cap S\in\I$.
Since $S^*$ is $b$-divergent, $A\cap S^*$ must be finite. Hence, $A = (A\cap S)\cup (A\cap S^*) \in \I$. Thus, 
$A+1\in \I$ as well and consequently,  $(A+1) \cap S \in \I$. Therefore, $\axdue$ holds.
\end{example}

\subsection{The key characterization of nested ideals via $t_\uu^\I(\T)$}\label{Key}

The next proposition shows that adding the hypothesis nested to the translations invariant free $P$-ideal $\I$, $\ix$ and $\iix$ alone suffice to deduce that $\bar x \in t^{\I}_\uu(\T)$ without any additional hypothesis on $\supp(x)$. Then Corollary~\ref{CoroGhosh} proves that~\cite[Corollary 2.12]{Ghosh}, holds when $\I$ is also nested.

\begin{proposition}\label{cor} 
Let $\uu\in\mathcal A$, let $\I$ be a translation invariant free nested $P$-ideal of $\N$ and $x\in[0,1)$. Then $\ix\&\iix \Rightarrow \bar x \in t^{\I}_\uu(\T)$.
\end{proposition}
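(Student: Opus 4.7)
The plan is short because almost all of the machinery is already in place. Assume $\ix$ and $\iix$ hold for $x\in[0,1)$, i.e., $\rho(S)\in\I$ and $S_b\subseteq_\I S$, where $S=\supp(x)$ and $S_b=\supp_b(x)$.

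First I would invoke Lemma~\ref{equi}: since $\I$ is translation invariant and nested, the hypothesis $\rho(S)\in\I$ immediately yields $\lambda(S)\in\I$. This is precisely $\iiix$. Hence, together with the assumed $\ix$ and $\iix$, we obtain the full conjunction $\Tx$.

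Then I would apply Theorem~\ref{in}, which says that $\Tx$ alone is sufficient for $\bar x\in t_\uu^\I(\T)$ (for any translation invariant free ideal, with no $b$-bounded or nested assumption). This concludes the proof.

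There is essentially no obstacle here: the work is packaged into Lemma~\ref{equi}, which converts the nestedness of $\I$ into the bridge $\ix\Rightarrow\iiix$, and into Theorem~\ref{in}, which turns $\Tx$ into membership in $t_\uu^\I(\T)$ via the decomposition $x\equiv_\I x^\flat=\alpha^{(l)}(x)-\alpha^{(r)}(x)$ (Corollary~\ref{Coro:May11} plus Corollary~\ref{123}). So the proposition is a two-line corollary: \emph{under nestedness of $\I$, $\ix\&\iix$ upgrades to $\Tx$, and $\Tx$ always implies $\bar x\in t_\uu^\I(\T)$}.
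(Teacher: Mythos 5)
Your argument is correct and is exactly the paper's proof: Lemma~\ref{equi} converts nestedness into the implication $\ix\Rightarrow\iiix$, so $\ix\&\iix$ upgrades to $\Tx$, and Theorem~\ref{in} then gives $\bar x\in t_\uu^\I(\T)$. No gaps.
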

\begin{proof}
Let $S = \supp(x)$. By Lemma~\ref{equi}, $\ix$ implies $\iiix$ since $\I$ is a nested ideal. 
In conjunction with $\iix$ this covers the hypothesis of Theorem~\ref{in}, so we conclude that $\bar x \in t^{\I}_\uu(\T)$. 
\end{proof}

\begin{corollary}\label{CoroGhosh} 
Let $\uu\in\mathcal A$, let $\I$ be a translation invariant free $P$-ideal of $\N$ and $x\in[0,1)$.
\begin{itemize}
\item[(1)] If $\supp(x)$ is $b$-bounded and $\bar x \in t^{\I}_\uu(\T)$, then $\ix\&\iix$ holds.
\item[(2)] If $\ix\&\iix$ hold and $\I$  is nested, then $\bar x \in t^{\I}_\uu(\T)$. 
\end{itemize}
\end{corollary}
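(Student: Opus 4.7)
The corollary packages together two facts already essentially established just above in the excerpt, so the plan is short: dispatch (1) via Lemma~\ref{Necessity2.12*} and dispatch (2) via Proposition~\ref{cor}. I would present both items as routine deductions, and there is no single ``hard step''—the heavy lifting has already been done in the earlier lemmas (Theorem~\ref{conjecture} feeding Lemma~\ref{Necessity2.12*}, and the nestedness reformulation in Lemma~\ref{equi} feeding Theorem~\ref{in}).

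For (1), assume $S=\supp(x)$ is $b$-bounded and $\bar x \in t_\uu^\I(\T)$. Theorem~\ref{conjecture} then gives $\ax$, in particular $\axuno$. By Lemma~\ref{Necessity2.12*}, under the $b$-boundedness of $S$ the condition $\axuno$ is equivalent to $\ix \& \iix$; hence $\ix \& \iix$ holds. (The case $S\in\I$ is trivial, since $\ix$ and $\iix$ are then vacuously true.)

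For (2), suppose $\ix \& \iix$ holds and $\I$ is nested. Since $\I$ is a translation invariant free ideal, Lemma~\ref{equi} tells us that the nested property is equivalent to the implication $\rho(S)\in\I \Rightarrow \lambda(S)\in\I$ for every $S\subseteq\N$. Applied to $S=\supp(x)$, the hypothesis $\ix$ (i.e.\ $\rho(S)\in\I$) upgrades to $\iiix$ (i.e.\ $\lambda(S)\in\I$). Together with the assumed $\ix \& \iix$, this yields $\Tx$, and Theorem~\ref{in} then gives $\bar x\in t_\uu^\I(\T)$, as desired. (Equivalently, one can cite Proposition~\ref{cor} directly.)

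The only place where care is needed is to make sure one does not inadvertently assume $b$-boundedness of $\supp(x)$ in part (2): the point of nestedness is precisely that $\ix \& \iix$ alone—with no restriction on $\supp(x)$—already imply $\Tx$, and then Theorem~\ref{in} applies without any assumption on the support. I would include a one-line remark to this effect so the reader sees that (2) genuinely strengthens the pattern of Theorem~\ref{Nuovo:Th}.
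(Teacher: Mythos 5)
Your proposal is correct and follows exactly the paper's own route: part (1) is Lemma~\ref{Necessity2.12*} (fed by Theorem~\ref{conjecture}), and part (2) is Proposition~\ref{cor}, which you correctly unpack as Lemma~\ref{equi} giving $\ix\Rightarrow\iiix$ and then Theorem~\ref{in}. Your closing remark that no $b$-boundedness assumption is needed in (2) is also consistent with the paper's emphasis.
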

\begin{proof} 
(1) was checked in Lemma~\ref{Necessity2.12*}, while (2) is Proposition~\ref{cor}.
\end{proof}

\begin{remark}\label{New:Rem}
By Example~\ref{DenId:is:nested} every $\I_{\alpha}$ with $\alpha\in(0,1]$ is nested, hence the above corollary also gives a complete proof of~\cite[Corollary 3.9]{DG1} (see Corollary~\ref{DG13.9}).
 
On the other hand, Proposition~\ref{notnested->} provides for every  $b$-bounded $\uu\in \A$ and every non-nested translation invariant free ideal $\I$ of $\N$ an element $x\in [0,1)$ with $\bar x\not \in t_\uu^\I(\T)$ and satisfying both $\ix$ and $\iix$. 
In particular, this shows that Corollary~\ref{CoroGhosh}(2) fails for non-nested ideals and for every $b$-bounded sequence $\uu\in \A$. 

Therefore,~\cite[Corollary 2.12]{Ghosh} is false, as the hypothesis of nestedness is missing there and Example~\ref{counterexample*} produces translation invariant analytic free $P$-ideals that are not nested.
\end{remark}

\begin{proposition}\label{notnested->}
{\rm [$\ix\&\iix\not\Rightarrow\iiix$, $\ix\&\iix\not\Rightarrow \bar x\in t_\uu^\I(\T)$]} Let $\uu\in \A$ be $b$-bounded and let $\I$ be a translation invariant free ideal of $\N$. If $\I$ is not nested, then there exists $x\in t_\uu^\I(\T)$ such that $\ix\&\iix$ holds but $\iiix$ does not hold. So also $\bar x\not\in t_\uu^\I(\T)$.
\end{proposition}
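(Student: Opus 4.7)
The starting point is Lemma~\ref{equi}: non-nestedness of $\I$ yields a set $S\subseteq \N$ with $\rho(S)\in \I$ and $\lambda(S)\notin \I$; such $S$ is necessarily infinite and non-cofinite (otherwise $\lambda(S)$ would be finite and hence in $\I$). Define
\[
x = \sum_{n\in S}\frac{b_n-1}{u_n}\in [0,1).
\]
Since $S$ is non-cofinite, this coincides with the canonical representation \eqref{ex-4}, so $\supp(x) = S = \supp_b(x)$. Consequently $\ix$ holds by the choice of $S$, $\iix$ is vacuous as $\supp(x)\setminus\supp_b(x) = \emptyset$, and $\iiix$ fails since $\lambda(S)\notin \I$.

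It remains to show $\bar x\notin t_\uu^\I(\T)$. Fix $C\in\N$ with $b_n\leq C$ for every $n$, using $b$-boundedness of $\uu$. For each $n\in (\lambda(S)-2)\cap \N$ one has $n+1\notin S$ and $n+2\in S$, and the divisibility $u_m\mid u_n$ for $m\leq n$ gives
\[
u_n x \equiv_\Z \sum_{\substack{m>n\\ m\in S}}\frac{b_m-1}{b_{n+1}\cdots b_m} = \frac{b_{n+2}-1}{b_{n+1}b_{n+2}} + T_n,
\]
where the $m=n+1$ contribution vanishes and, by the telescoping identity behind \eqref{finiteestimation},
\[
0\leq T_n \leq \sum_{m\geq n+3}\frac{b_m-1}{b_{n+1}\cdots b_m}=\frac{1}{b_{n+1}b_{n+2}}.
\]
Hence $u_nx$ is congruent modulo $\Z$ to a number in $\bigl[\tfrac{b_{n+2}-1}{b_{n+1}b_{n+2}},\,\tfrac{1}{b_{n+1}}\bigr]\subseteq [0,1/2]$, so
\[
\|u_n\bar x\|\geq \frac{1}{b_{n+1}}\Bigl(1-\frac{1}{b_{n+2}}\Bigr)\geq \frac{1}{2C},
\]
using $b_{n+1}\leq C$ and $b_{n+2}\geq 2$.

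Since $\lambda(S)\notin \I$ and $\I$ is translation invariant (and contains $\F in$), also $(\lambda(S)-2)\cap \N\notin \I$; therefore $\{n\in\N:\|u_n\bar x\|\geq 1/(2C)\}\notin \I$, showing $\bar x\notin t_\uu^\I(\T)$. The main subtlety is choosing the right indices: the naive choice $n\in\lambda(S)-1$ produces $u_nx$ congruent modulo $\Z$ to a number in $[1-\tfrac{1}{b_{n+1}},1)$, i.e., $u_n\bar x$ \emph{close} to $0$ in $\T$; shifting by two extra units, so that the leading nonzero digit after position $n$ appears in slot $n+2$ rather than $n+1$, combined with $b$-boundedness of $\uu$ to keep $\tfrac{1}{b_{n+1}}(1-\tfrac{1}{b_{n+2}})$ uniformly bounded below, is what forces the orbit of $\bar x$ out of a fixed neighborhood of $0$ along a set outside $\I$.
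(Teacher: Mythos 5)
Your construction of $x$ is exactly the paper's: a set $S$ with $\rho(S)\in\I$ and $\lambda(S)\notin\I$ obtained from Lemma~\ref{equi}, and $x=\sum_{n\in S}\frac{b_n-1}{u_n}$, so that $\supp(x)=\supp_b(x)=S$ and $\ix$, $\iix$ hold while $\iiix$ fails. Where you diverge is the final step: the paper simply invokes Corollary~\ref{b-boundedreal} (the equivalence $\bar x\in t_\uu^\I(\T)\Leftrightarrow\Tx$ for $b$-bounded $\uu$), whose necessity direction ultimately rests on the heavy Theorem~\ref{conjecture} and carries a $P$-ideal hypothesis; you instead prove $\bar x\notin t_\uu^\I(\T)$ by a direct estimate, showing $\|u_n\bar x\|\ge\frac{1}{2C}$ for all $n\in(\lambda(S)-2)\cap\N$, a set not in $\I$ by translation invariance. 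Your computation is correct: for such $n$ one has $n+1\notin S$ and $n+2\in S$, the leading term is $\frac{b_{n+2}-1}{b_{n+1}b_{n+2}}$, the tail is at most $\frac{1}{b_{n+1}b_{n+2}}$ by the telescoping bound~\eqref{NEW:Lemma}, and the resulting value lies in $[0,1/2]$ so the norm is the value itself. This buys two things: the argument is self-contained (no appeal to Theorem~\ref{conjecture}), and it genuinely works for an arbitrary translation invariant free ideal, matching the proposition's stated hypotheses --- whereas the paper's citation of Corollary~\ref{b-boundedreal} formally imports a $P$-ideal assumption that the proposition does not make (the authors elsewhere assert the implication (5)$\Rightarrow$(1) of Theorem~\ref{Cor:May12prec} needs no $P$-ideal, and your proof vindicates that). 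Your closing remark about why $n\in\lambda(S)-1$ would not work (it gives $u_nx$ congruent to a point of $[1-\frac{1}{b_{n+1}},1]$, hence only an upper bound on the norm) correctly identifies the one subtlety in the index choice.
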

\begin{proof} 
Let $((l_n),(r_n)$ be the pair of left nested sequences in $\N$ witnessing that $\I$ is not nested, namely, $L=\{l_n:n\in\N\}\not\in\I$ and $R=\{r_n:n\in\N\}\in\I$. Define $S=\bigcup_{n\in\N}[l_n,r_n]_\N$ and $x=\sum_{i\in S}\frac{b_i-1}{u_i}$.  Then $\ix$ holds, since $R=\rho(S)\in\I$. As $\supp(x)=\supp_b(x)$, also $\iix$ holds. Moreover, $L=\lambda(S)\not\in\I$, hence $\iiix$ does not hold. In view of Corollary~\ref{b-boundedreal}, $\bar x\not\in t_\uu^\I(\T)$.
\end{proof}

\begin{proof}[\bf Proof of Theorem~\ref{Cor:May12prec}]
The implications (2)$\Rightarrow$(3)$\Rightarrow$(4)$\Rightarrow$(5) are trivial, (5)$\Rightarrow$(1) is Proposition~\ref{notnested->} and (1)$\Rightarrow$(2) is Proposition~\ref{cor}.
\end{proof}

The implications (4)$\Rightarrow$(5)$\Rightarrow$(1)$\Rightarrow$(2)  do not use the assumption that $\I$ is a $P$-ideal. As far as  the implication (1)$\Rightarrow$(2) is concerned, we note that in (2) the implication~\eqref{Eq:May23} is required for {all} $x\in[0,1)$, not only for those with $b$-bounded support (see Proposition~\ref{cor}).

\section{When $\supp(x)$ is $b$-divergent}\label{Sec:Coro3.10:DG1}

Here we consider the case when $\supp(x)$ is $b$-divergent and we give a proof to Theorem~\ref{Last:corollary}, which is inspired by~\cite[Corollary 3.4]{DI} (see Corollary~\ref{unbounded}) and generalizes it. 
Moreover, Theorem~\ref{Last:corollary} specialized for $\I=\I_\alpha$ is~\cite[Corollary~3.10]{DG1} (see Corollary~\ref{C3.10}).
While the proof of~\cite[Corollary 3.10]{DG1} uses the counterpart of Theorem~\ref{conjecture} for the sufficiency of the conditions $\Ix$ and $\IIx$, the proof of Theorem~\ref{Last:corollary} does not make recurse to Theorem~\ref{conjecture} for this implication.

\begin{proposition}\label{III}
Let $\uu\in\A$, let $\I$ be a translation invariant free ideal of $\N$ and $x\in[0,1)$ with $S=\supp(x)$ $b$-divergent. Then 
$\ax\Leftrightarrow\axdue\Leftrightarrow\IIx$ and $\bx\Leftrightarrow\Ix$.
\end{proposition}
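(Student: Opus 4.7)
The plan is to exploit the key consequence of the hypothesis ``$S$ is $b$-divergent'': \emph{every infinite subset of $S$ is itself $b$-divergent.} This observation disposes of $\axuno$ and also forces a rigid interplay between ``being $b$-bounded'' and ``being outside $S$ (modulo $\I$)'' which translates $\axdue$ into $\IIx$ after a shift.

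\textbf{Chain $\ax\Leftrightarrow\axdue\Leftrightarrow\IIx$.} The nontrivial part of $\ax\Leftrightarrow\axdue$ is that, under $b$-divergence of $S$, the hypothesis of $\axuno$ is never met. Indeed if $A\in\P(\N)\setminus\I$ were $b$-bounded with $A\subseteq^\I S$, then $A\setminus S\in\I$ forces $A\cap S\notin\I$, so $A\cap S$ is infinite; but $A\cap S$ is simultaneously $b$-divergent (as an infinite subset of $S$) and $b$-bounded (as a subset of $A$), a contradiction. For $\axdue\Rightarrow\IIx$, given $D\in\P(\N)\setminus\I$ with $D\subseteq^\I S$ and $D-1$ $b$-bounded, I set $A:=D-1$; translation invariance gives $A\notin\I$, and $A\cap S=(D-1)\cap S$ must be finite (otherwise it would again be both a $b$-divergent subset of $S$ and a $b$-bounded subset of $A$). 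Applying $\axdue$ produces $A'\subseteq_\I A$ with $\lim_{n\in A'}c_{n+1}/b_{n+1}=0$, and $D':=A'+1\subseteq_\I D$ satisfies $\lim_{m\in D'}c_m/b_m=0$.

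For $\IIx\Rightarrow\axdue$, given $A\in\P(\N)\setminus\I$ $b$-bounded with $A\cap S\in\I$, I would set $D:=(A+1)\cap S$, so $D\subseteq S$ and $D-1=A\cap(S-1)\subseteq A$ is $b$-bounded. If $D\in\I$, then $A\cap(S-1)\in\I$ and $A':=A\setminus(S-1)\subseteq_\I A$ has $c_{n+1}=0$ for every $n\in A'$. If $D\notin\I$, $\IIx$ produces $D'\subseteq_\I D$ with $\lim_{n\in D'}c_n/b_n=0$, and I take $A':=(A\setminus(S-1))\cup(D'-1)$; a direct verification shows $A\setminus A'\subseteq(D\setminus D')-1\in\I$, so $A'\subseteq_\I A$, while $c_{n+1}=0$ on the first piece and the limit hypothesis handles the second.

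\textbf{Equivalence $\bx\Leftrightarrow\Ix$.} The forward direction is immediate: if $S\in\I$ then $\Ix$ is trivially satisfied, and otherwise $S$ is itself a $b$-divergent set not in $\I$, so applying $\bx$ to $A=S$ furnishes the required $D\subseteq_\I S$. Conversely, for $\Ix\Rightarrow\bx$, given any $b$-divergent $A\in\P(\N)\setminus\I$, I use that $c_n=0$ on $A\setminus S$ together with the set $D$ supplied by $\Ix$ (taking $D=\emptyset$ if $S\in\I$), and set $B:=(A\cap D)\cup(A\setminus S)$; then $A\setminus B\subseteq S\setminus D\in\I$, so $B\subseteq_\I A$, and the limit condition holds on $B$ because it is trivial on $A\setminus S$ and inherited on $A\cap D\subseteq D$.

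The main obstacle will be the backward direction of Step 2: correctly guessing the auxiliary set $D=(A+1)\cap S$ (rather than $D=A+1$, which need not satisfy $D\subseteq^\I S$), treating the two cases $D\in\I$ and $D\notin\I$ separately, and bookkeeping the $\I$-smallness of $A\setminus A'$ via translation invariance. Once this combinatorial bridge between the ``shift to the right'' in $\axdue$ and the ``anchored in $S$'' formulation of $\IIx$ is in place, the rest of the proof reduces to routine verifications.
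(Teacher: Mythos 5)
Your proposal is correct and follows essentially the same route as the paper: you make $\axuno$ vacuous by observing that a $b$-bounded $A\notin\I$ must meet the $b$-divergent set $S$ in a finite set, you pass between $\axdue$ and $\IIx$ via the shifted auxiliary set $(A+1)\cap S$ (your $A'$ coincides with the paper's $D''-1$), and you prove $\bx\Leftrightarrow\Ix$ by applying $\bx$ to $S$ and splitting $B$ into the piece inside $D$ and the piece outside $S$. No gaps.
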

\begin{proof} 
$\axdue\Rightarrow\IIx$
Let $D\in\P(\N)\setminus \I$ with $D\subseteq^\I S$ and $D-1$ $b$-bounded. Since $D\not\in\I$, also $D-1\not\in\I$ being $\I$ translation invariant. As $S$ is $b$-divergent and $D-1$ is $b$-bounded, $(D-1)\cap S$ is finite and in particular $(D-1)\cap S\in\I$. By $\axdue$, there exists $A'\subseteq_\I D-1$ such that $\lim\limits_{n\in A'}\frac{c_{n+1}}{b_{n+1}}=0$. Consequently, $D'=A'+1\subseteq_\I D$ and $\lim\limits_{n\in D'}\frac{c_n}{b_n}=0$. Namely, $\IIx$ holds.

$\IIx\Rightarrow\ax$
Assume that $A\in\P(\N)\setminus\I$ is $b$-bounded. 
Since $S$ is $b$-divergent and $A$ is $b$-bounded, $A\cap S$ is finite and so $A\cap S\in\I$ while $A\setminus S\not\in\I$. Therefore, it suffices to verify $\axdue$. 

Suppose that $(A+1)\cap S\in\I$. There exists $A'\subseteq\N$ such that  $A'+1=(A+1)\cap S^*$. Then $A'+1 \subseteq_\I A+1$ and $\lim\limits_{n\in A'+1}\frac{c_{n}}{b_{n}}=0$ since $c_{n}=0$ for every $n\in A'+1$. In particular, $A'\subseteq_\I A$ and $\lim\limits_{n\in A'}\frac{c_{n+1}}{b_{n+1}}=0$.

Now assume that $D:=(A+1)\cap S\not\in\I$.
Since $D-1\subseteq A$ is $b$-bounded, by $\IIx$ there exists $D'\subseteq_\I D$ such that $\lim\limits_{n\in D'}\frac{c_n}{b_n}=0$. 
Let $D''=D'\sqcup(A+1)\cap S^*\subseteq_\I A+1$. As $c_n=0$ for every $n\in(A+1)\cap S^*$, we get that $\lim\limits_{n\in D''}\frac{c_n}{b_n}=0$.
For $A'=D''-1\subseteq_\I A$, $\lim\limits_{n\in A'}\frac{c_{n+1}}{b_{n+1}}=0$, 
so $\axdue$ holds.

$\bx\Rightarrow\Ix$ When $S\not\in\I$, then $\bx$ applied to $S$ gives $\Ix$. 

$\Ix\Rightarrow\bx$ If $S\in\I$, then $\bx$ holds. So assume that $S\not\in\I$ and that $A\in\P(\N)\setminus\I$ is $b$-divergent. If $A\cap S\in\I$, then $A\setminus S\subseteq_\I A$ and $\lim\limits_{n\in A\setminus S}\varphi\left(\frac{c_n}{b_n}\right)=0$ since $c_n=0$ for every $n\in A\setminus S$. Now assume that $A\cap S\not\in\I$. By $\Ix$, there exists $D\subseteq_\I S$ such that $\lim\limits_{n\in D}\varphi\left(\frac{c_n}{b_n}\right)=0$. Let $B:=(D\cap A)\sqcup (A\setminus S)$, where $D\cap A\subseteq_\I A\cap S$. Then $B\subseteq_\I A$ and $\lim\limits_{n\in B}\varphi\left(\frac{c_n}{b_n}\right)=0$. So, $\bx$ holds.
\end{proof}

\begin{remark}\label{notIx}
Here we discuss the validity of $\Ix$ and $\IIx$ for $\uu\in\A$, a translation invariant free ideal $\I$ of $\N$ and $x\in[0,1)$ when $\supp(x)\in\I$.
Obviously, the assumption $\supp(x)\in\I$ makes $\IIx$ vacuously true. 
As far as $\Ix$ is concerned, the statement ``$\mathrm{(\#)}$ there exists $D\subseteq_\I S$ such that $\lim\limits_{n\in D}\varphi\left(\frac{c_n}{b_n}\right)=0$'' obviously fails when $\supp(x)$ is $b$-bounded. 
Nevertheless, this non-implication is valid also when $\I$ contains some (necessarily infinite) $S_0\in \I$ which is $b$-divergent. 
Indeed,  let $x\in[0,1)$ with $\supp(x)=S_0$ and $c_n=\lfloor\frac{b_n}{2}\rfloor$ for every $n\in S_0$; then $\mathrm{(\#)}$ fails for this $x$, 
as $\lim\limits_{n\in S_0}\frac{c_n}{b_n}=\frac{1}{2}$, so if $D\subseteq S_0$ and $D$ is infinite, again $\lim\limits_{n\in D}\frac{c_n}{b_n}=\frac{1}{2}$. 
\end{remark}

The next implication of Theorem~\ref{Last:corollary} follows also from Proposition~\ref{III} and Theorem~\ref{conjecture}; nevertheless, we provide a direct simple proof, since the proof of the sufficiency in Theorem~\ref{conjecture} is quite heavy.

\begin{proposition}\label{salvataggio}
Let $\uu\in\A$, let $\I$ be a translation invariant free ideal of $\N$ and $x\in[0,1)$ with $S=\supp(x)$ $b$-divergent. If $\Ix\&\IIx$ holds, then $\bar x\in t_\uu^\I(\T)$.
\end{proposition}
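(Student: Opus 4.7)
The plan is to show directly that for every $\delta\in(0,1/2)$ the set $G_\delta:=\{n\in\N:\|u_n x\|\geq\delta\}$ lies in $\I$, from which $u_n\bar x\overset{\I}\longrightarrow 0$ follows. If $S\in\I$ this is Lemma~\ref{Lemma2.2}, so assume $S\notin\I$; then $\Ix$ supplies $D\subseteq_\I S$ with $\lim_{n\in D}\varphi(c_n/b_n)=0$. The starting point is the identity
\begin{equation*}
u_n x\equiv_\Z\frac{c_k+f(k)}{b_{n+1}\cdots b_k},\qquad k=\psi(n):=\min\{m>n:m\in S\},
\end{equation*}
where $f(k):=\sum_{m>k}c_m u_k/u_m\in[0,1)$ by~\eqref{NEW:Lemma} (and $\psi(n)$ exists since $S$ is infinite). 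I split $G_\delta=G_\delta^S\sqcup H$ with $G_\delta^S:=G_\delta\cap(S-1)$ (so that $\psi(n)=n+1$) and $H:=G_\delta\setminus(S-1)$.

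For $G_\delta^S$ the identity gives $\|u_n x-\varphi(c_{n+1}/b_{n+1})\|\leq 1/b_{n+1}$, so $\|u_n x\|\geq\delta$ forces either $b_{n+1}<2/\delta$ or $\|\varphi(c_{n+1}/b_{n+1})\|\geq\delta/2$. Hence $G_\delta^S+1\subseteq\{m\in S:b_m<2/\delta\}\cup\{m\in S:\|\varphi(c_m/b_m)\|\geq\delta/2\}$. The first set is finite by $b$-divergence of $S$, and the second belongs to $\I$ because its intersection with $D$ is finite and $S\setminus D\in\I$. Translation invariance of $\I$ then yields $G_\delta^S\in\I$.

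The main obstacle is $H$. For $n\in H$ with $k=\psi(n)$, the identity, $\|u_n x\|\geq \delta$, and $c_k+f(k)\leq b_k$ force simultaneously a \emph{gap bound} $b_{n+1}\cdots b_{k-1}\leq 1/\delta$ (so $k-n-1\leq\log_2(1/\delta)=:L$, and each $b_j\leq 1/\delta$ for $j\in[n+1,k-1]$) and the \emph{size bound} $c_k/b_k\geq 2\delta-1/b_k$ (using $b_{n+1}\cdots b_{k-1}\geq b_{n+1}\geq 2$). Set $Y:=\psi(H)\subseteq\lambda(S)$. Each $k\in Y$ falls into one of three cases:
\begin{itemize}
\item[(1)] $b_k\leq 1/\delta$, so $k\in\{m\in S:b_m\leq 1/\delta\}$, finite by $b$-divergence of $S$;
\item[(2)] $b_k>1/\delta$ and $c_k/b_k\in[\delta,1-\delta]$, so $\|\varphi(c_k/b_k)\|\geq\delta$, placing $k$ in the $\I$-set $\{m\in S:\|\varphi(c_m/b_m)\|\geq\delta\}$ by the same $D$-argument used above;
\item[(3)] $c_k/b_k>1-\delta$, the case that motivates the use of $\IIx$.
\end{itemize}
Let $Y^{(3)}\subseteq Y$ be the case-(3) elements. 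Then $Y^{(3)}\subseteq S$, and $Y^{(3)}-1\subseteq\{m:b_m\leq 1/\delta\}$ is $b$-bounded (since each $k-1$ lies in the gap before $k$). If $Y^{(3)}\notin\I$, $\IIx$ would supply $D'\subseteq_\I Y^{(3)}$ with $c_n/b_n\to 0$ along $D'$, contradicting $c_n/b_n>1-\delta$ on $Y^{(3)}$. Hence $Y^{(3)}\in\I$, and combining the three cases yields $Y\in\I$.

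Finally, the gap bound $k-n-1\leq L$ translates into $H+1\subseteq\bigcup_{i=1}^{\lceil L\rceil}(Y-i)$, a finite union of translates of $Y\in\I$; translation invariance of $\I$ then gives $H+1\in\I$, hence $H\in\I$. Together with $G_\delta^S\in\I$ this yields $G_\delta\in\I$, completing the proof. The subtle point, and the reason $\IIx$ is genuinely needed beyond $\Ix$, is that $\varphi(c_k/b_k)\to 0$ is compatible with $c_k/b_k\to 1$, so the dual regime in case~(3) cannot be excluded by $\Ix$ alone.
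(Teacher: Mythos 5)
Your proof is correct and follows essentially the same strategy as the paper's: control the indices $n$ with $n+1\in\supp(x)$ via $\Ix$, and for the remaining bad indices map each $n$ to the next support point $k$, bound the gap length using $b_j\geq 2$, show the set of such $k$ lies in $\I$ by applying $\IIx$ to a set whose shift by $-1$ is $b$-bounded, and pull back through finitely many translates. The only difference is organizational: the paper first replaces $x$ by an $\I$-equivalent element supported on the set $D$ from $\Ix$ (so that $\varphi(c_n/b_n)\to 0$ along all of $S$), which lets it skip your case (2) and the repeated handling of the exceptional set $S\setminus D$.
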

\begin{proof}
 If $S \in\I$, then $\bar x\in t_\uu^\I(\T)$ by Lemma~\ref{Lemma2.2}. So, assume that $S\not\in\I$. In view of $\Ix$, there exists $D\subseteq_\I S$ such that $\lim\limits_{n\in D}\varphi\left(\frac{c_n}{b_n}\right)=0$. Let $y=\sum_{n\in D}\frac{c_n}{u_n}$, then $x\equiv_\I y$, as $D\subseteq_\I S$. By Remark~\ref{Iae}, $\Ix\Leftrightarrow\mathrm{(I}_y\mathrm{)}$ and $\IIx\Leftrightarrow\mathrm{(II}_y\mathrm{)}$. Moreover, $\bar x\in t_\uu^\I(\T)$ if and only if $\bar y\in t_\uu^\I(\T)$. This is why we can assume that $x=y$ (and consequently, $S=D$) from now on, so that $\lim\limits_{n\to\infty}\varphi\left(\frac{c_n}{b_n}\right)=\lim\limits_{n\in S}\varphi\left(\frac{c_n}{b_n}\right) =0$.
Therefore, it remains to see that $\bar x \in t_\uu^\I(\T)$.

\smallskip
Let $0<\eps<\frac{1}{4}$ and let $k_0\in\N_+$ with $\left\Vert \frac{c_k}{b_k}\right\Vert<\eps$ for every $k\geq k_0$ and $b_k\geq\frac{1}{\eps}$ for every $k\in S$ with $k\geq k_0$. To this end it is sufficient to  prove that $E:=\{m>k_0:\Vert u_mx\Vert\geq 3\eps\}\in\I$.

First we see that 
\begin{equation}\label{ES}
\text{if $m\in E$, then $m+1\not\in S$.}
\end{equation}
In fact, let $m\in E$ and assume for a contradiction that $m+1\in S$. Using~\eqref{NEW:Lemma},
$$u_mx\equiv_\Z u_m\frac{c_{m+1}}{u_{m+1}}+u_m\sum_{i=m+2}^\infty\frac{c_i}{u_i}\leq \frac{c_{m+1}}{b_{m+1}}+\frac{1}{b_{m+1}}$$
and $m+1\in S$,  one obtains
$3\eps\leq\Vert u_m x\Vert\leq\left\Vert \frac{c_{m+1}}{b_{m+1}}\right\Vert+\frac{1}{b_{m+1}}\leq2\eps,$
a contradiction.

\smallskip
For $m\in E$, let $k_m=\min\{k\in S:k>m\}$ and note that $k_m\geq m+2$ by~\eqref{ES}. Let $K=\{k_m:m\in E\}$. Let $r\in\N_+$ with $2^{-r}\leq 2\eps$. Now we prove the following assertion, which gives the required $E\in\I$:
\begin{equation}\label{goal}
E\subseteq\bigcup_{i=2}^{r+1} (K-i)\quad\text{and}\quad K\in\I.
\end{equation}

Let $m\in E$ and $k=k_m$.
By the choice of $k$, 
$$
u_mx\equiv_\Z u_m\frac{c_k}{u_k}+u_m\sum_{i=k+1}^\infty \frac{c_i}{u_i}, \quad \text{where} \quad u_m\sum_{i=k+1}^\infty \frac{c_i}{u_i}\leq \frac{u_m}{u_k}\leq \frac{u_{k-1}}{u_k}=\frac{1}{b_k}<\eps.
$$
Hence, 
\begin{equation}\label{2eps}
\left\Vert u_m\frac{c_k}{u_k}\right\Vert\geq \Vert u_m x\Vert-\eps\geq 2\eps.
\end{equation}
It follows that 
$$\frac{1}{2^r}\leq 2\eps\leq u_m\frac{c_k}{u_k}=\frac{u_m}{u_{k-1}} \frac{c_k}{b_k}\leq \frac{u_m}{u_{k-1}}\leq \frac{1}{2^{(k-1)-m}}$$
and so
$r\geq k-1-m$, namely, $m+2\leq k\leq m+ r+1$. Therefore, there exists $i\in\N$ with $2\leq i\leq r+1$ such that $m=k-i$. This proves the inclusion $E\subseteq\bigcup_{i=2}^{r+1}(K-i)$ in~\eqref{goal}.

\smallskip
Now we verify that $K\in\I$ as announced in~\eqref{goal}, which implies that $\bigcup_{i=2}^{r+1}(K-i)\in\I$, as $\I$ is translation invariant, and so we can conclude that $E\in\I$, as required. 

First, we see that $K-1$ is $b$-bounded. Let $k\in K$, so there exists $m\in E$ with $k=k_m$. Since $u_m\frac{c_k}{u_k}\geq 2\eps$ in view of~\eqref{2eps} and $k\geq m+2$, 
\begin{equation}\label{*H}
2\eps\leq\frac{u_{k-2} c_k}{u_k}=\frac{1}{b_{k-1}}\frac{c_k}{b_k}\leq \frac{1}{b_{k-1}},
\end{equation}
hence $b_{k-1}\leq \frac{1}{2\eps}$.
This shows that $K-1$ is $b$-bounded.

Now assume for a contradiction that $K\not\in\I$. As $K \subseteq S$, we can apply $\IIx$. This entails the existence of $k\in K$ such that $\frac{c_k}{b_k}\leq\eps$. Using~\eqref{*H},
$$
2\eps\leq\frac{1}{b_{k-1}}\frac{c_k}{b_k}\leq \frac{1}{b_{k-1}}\eps \quad \text{implies that} \quad b_{k-1}\leq\frac{1}{2},$$ which is a contradiction. So, $K\in\I$ and we conclude the proof.
\end{proof}

\begin{corollary}\label{Last:corollary-old}
Let $\uu\in\A$, let $\I$ be a translation invariant free $P$-ideal of $\N$ and $x\in[0,1)$ with $\supp(x)$ $b$-divergent. Then $\bar x\in t_\uu^\I(\T)$ if and only if $\Ix\&\IIx$ holds.
\end{corollary}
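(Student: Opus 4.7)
The plan is to derive the corollary as an immediate assembly of the two preceding results of this section, since all the substantive work has already been done in Propositions~\ref{III} and~\ref{salvataggio}. Specifically, Proposition~\ref{III} translates the conditions $\ax$ and $\bx$ from Theorem~\ref{conjecture} into $\IIx$ and $\Ix$ under the $b$-divergent hypothesis on $S=\supp(x)$, while Proposition~\ref{salvataggio} directly supplies the sufficiency of $\Ix\&\IIx$.

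For the necessity direction, I would proceed as follows: assuming $\bar x\in t_\uu^\I(\T)$, Theorem~\ref{conjecture} gives both $\ax$ and $\bx$. Since $S$ is assumed $b$-divergent, Proposition~\ref{III} applies and yields the equivalences $\ax\Leftrightarrow\axdue\Leftrightarrow\IIx$ and $\bx\Leftrightarrow\Ix$, whence $\Ix\&\IIx$ follows at once. The sufficiency direction is then the content of Proposition~\ref{salvataggio} verbatim, no further argument being needed.

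The step that would have been hard—the sufficiency of $\Ix\&\IIx$—has been handled upstream by the direct estimate in the proof of Proposition~\ref{salvataggio}, where the $b$-divergence of $S$ forces the critical set $E=\{m>k_0:\|u_m x\|\geq 3\eps\}$ to be contained in a translate of some $K\in\I$. The only methodological point worth emphasizing in the exposition is that this route deliberately bypasses the sufficiency half of Theorem~\ref{conjecture} (the paper's heaviest piece of machinery), relying instead on the self-contained argument in Proposition~\ref{salvataggio}; the corollary itself is then essentially a statement that $\Ix$ and $\IIx$ are the natural $b$-divergent specializations of $\bx$ and $\ax$, respectively.
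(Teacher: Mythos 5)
Your proposal is correct and follows exactly the paper's own argument: necessity via Theorem~\ref{conjecture} combined with the equivalences of Proposition~\ref{III}, and sufficiency directly from Proposition~\ref{salvataggio}. Nothing is missing.
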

\begin{proof}
That $\Ix\&\IIx$ implies $\bar x\in t_\uu^\I(\T)$ follows from Proposition~\ref{salvataggio}. The converse implication follows from Theorem~\ref{conjecture} (where the hypothesis $P$-ideal is needed) and Proposition~\ref{III}.
\end{proof}

\begin{proof}[\bf Proof of Theorem~\ref{Last:corollary}] 
We have to prove that if $\uu \in \A$ and $\I$ is a translation invariant free $P$-ideal of $\N$ and $x \in [0,1)$ with $\supp(x)$ $b$-divergent mod $\I$, then $\bar x\in t_\uu^\I(\T)$ if and only if $\Ix\&\IIx$ holds.

Let $M\subseteq_\I\N$ be $b$-divergent. Moreover, let $y=\sum_{n\in M}\frac{c_n}{u_n}$, so that $x\equiv_\I y$. Then $\bar x\in t_\uu^\I(\T)$ is equivalent to $\bar y\in t_\uu^\I(\T)$ by Proposition~\ref{Coro:May11}. Now, in view of Corollary~\ref{Last:corollary-old}, this is equivalent to $\mathrm{(I}_y\mathrm{)}\&\mathrm{(II}_y\mathrm{)}$. By Remark~\ref{Iae}, $\Ix$ is equivalent to $\mathrm{(I}_y\mathrm{)}$ and $\mathrm{(II}_x\mathrm{)}$ is equivalent to $\mathrm{(II}_y\mathrm{)}$.
\end{proof}

Next we see in Corollary~\ref{NmodI}, that under the strong hypothesis that the whole $\N$ is $b$-divergent mod $\I$, $\Ix$ is sufficient to get $\bar x\in t_\uu^\I(\T)$.
First, the next lemma shows the strength of the hypothesis.

\begin{lemma}\label{Bu}
Let $\uu\in\A$ and let $\I$ be a free ideal of $\N$. Then $\N$ is $b$-divergent mod $\I$ if and only if every $b$-bounded subset of $\N$ is contained in $\I$.
\end{lemma}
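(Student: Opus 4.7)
The proof splits naturally into the two implications. For $(\Rightarrow)$, I would pick a witness $A\subseteq\N$ with $\N\setminus A\in\I$ and $(b_n)_{n\in A}$ diverging to infinity, and show that every $b$-bounded $B\subseteq\N$ lies in $\I$. The key observation is that $B\cap A$ must be finite: otherwise the sequence $(b_n)_{n\in B\cap A}$ would be simultaneously bounded (as a subsequence indexed by the $b$-bounded set $B$) and divergent to infinity (as a subsequence indexed by the $b$-divergent set $A$), which is impossible. Freeness of $\I$ then gives $B\cap A\in\I$, and $B\setminus A\subseteq\N\setminus A\in\I$, so $B=(B\cap A)\cup(B\setminus A)\in\I$.

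For $(\Leftarrow)$, my plan is to work with the countable family of level sets $B_M:=\{n\in\N:b_n\le M\}$, $M\in\N$. Each $B_M$ is $b$-bounded by construction (either finite, or infinite with $(b_n)_{n\in B_M}\le M$), and hence belongs to $\I$ by hypothesis. I then invoke the $P$-ideal property (tacitly in force, as this lemma is applied in the paper only to translation invariant free $P$-ideals) to extract a single $C\in\I$ with $B_M\subseteq^* C$ for every $M$. Setting $A:=\N\setminus C$, we obtain $\N\setminus A=C\in\I$, and for each $M$ the intersection $A\cap B_M=B_M\setminus C$ is finite. Hence only finitely many $n\in A$ satisfy $b_n\le M$ for each $M$, so $(b_n)_{n\in A}\to\infty$ and $A$ is $b$-divergent. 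The degenerate possibility that $A$ is finite would force $\N=A\cup C\in\I$, which can be excluded by properness of $\I$ (or handled separately as a vacuous case).

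The main obstacle lies in the reverse direction: one must compress the countable family of $b$-bounded sets, already known to reside in $\I$, into a single $\I$-element $C$ that mod-finitely dominates all of them. This is precisely where the $P$-ideal structure is essential; without it, no such $C$ need exist, and one can indeed construct a free ideal $\I$ (for example, the ideal of $b$-bounded subsets for a sequence $(b_n)$ taking each value $k\ge 2$ on an infinite set) where the hypothesis holds but no $b$-divergent witness with complement in $\I$ exists. Once $C$ is available, however, the partition $\N=A\sqcup C$ delivers the conclusion immediately.
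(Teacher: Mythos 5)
Your forward implication is exactly the paper's: take a $b$-divergent $D\subseteq_\I\N$, note that any $b$-bounded $B$ meets $D$ in a finite set, so $B\subseteq^* D^*\in\I$ and freeness finishes. For the converse the paper does not argue at all: it simply cites \cite[Lemma~2.10]{I-torsion} to produce a $b$-divergent $D\subseteq_\I\N$. Your construction --- putting the level sets $B_M=\{n:b_n\le M\}$ into $\I$ and compressing them into a single pseudo-union $C\in\I$ via the $P$-ideal property, then taking $A=\N\setminus C$ --- is therefore a self-contained replacement for that external reference, and it is correct (your handling of the degenerate case where $A$ is finite via properness is also fine; note the paper does not uniformly assume properness, so that caveat is worth keeping). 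More importantly, your analysis exposes something the paper's statement hides: the lemma is asserted for an arbitrary free ideal, but the converse genuinely requires more, and your counterexample is sound --- if every value $k\ge 2$ is attained by $(b_n)$ on an infinite set and $\I$ is the (free) ideal of all $b$-bounded subsets, then the right-hand side holds vacuously while no $b$-divergent $A$ can have $b$-bounded complement, since that complement would contain almost all of each infinite level set $\{n:b_n=k\}$ and hence be $b$-unbounded. So either the cited \cite[Lemma~2.10]{I-torsion} carries extra hypotheses (presumably the $P$-ideal property, which is available in the lemma's only application, Corollary~\ref{NmodI}), or the statement of Lemma~\ref{Bu} is missing a hypothesis; your proof supplies both the missing argument and the reason the hypothesis cannot be dropped.
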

\begin{proof}
Assume that $\N$ is $b$-divergent mod $\I$ and let $D\subseteq_\I \N$ be $b$-divergent; in particular, $D^*\in\I$. Let $B$ be a $b$-bounded subset of $\N$. Then $B\cap D$ is finite, so $B\subseteq^* D^*\in\I$, and hence $B\in\I$.
Vice versa, assume that all $b$-bounded subsets of $\N$ are in $\I$. 
By~\cite[Lemma~2.10]{I-torsion}, there exists a $b$-divergent $D\subseteq_\I\N$, that is, $\N$ is $b$-divergent mod $\I$.
\end{proof}

\begin{corollary}\label{NmodI}
Let $\uu\in\A$, let $\I$ be a translation invariant free $P$-ideal of $\N$ and $x\in[0,1)$ with $S=\supp(x)$. If $\N$ is $b$-divergent mod $\I$, then $\bar x\in t_\uu^\I(\T)$ if and only if $\Ix$ holds.
\end{corollary}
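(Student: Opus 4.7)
\textbf{Proof proposal for Corollary~\ref{NmodI}.}
The plan is to reduce the claim to Theorem~\ref{Last:corollary} by checking two things: that the hypothesis ``$S=\supp(x)$ is $b$-divergent mod $\I$" is automatically satisfied whenever $S\notin\I$, and that condition $\IIx$ becomes vacuous as soon as $\N$ is $b$-divergent mod $\I$.

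First, I would dispose of the trivial case $S\in\I$: by Lemma~\ref{Lemma2.2} one has $\bar{x}\in t_\uu^\I(\T)$, while $\Ix$ holds through its first alternative. So from now on assume $S\notin\I$. Since $\N$ is $b$-divergent mod $\I$, there exists a $b$-divergent set $A_0\subseteq_\I\N$, i.e.\ with $\N\setminus A_0\in\I$. Setting $A:=A_0\cap S$, one gets $A\subseteq S$ and $S\setminus A\subseteq\N\setminus A_0\in\I$, so $A\subseteq_\I S$. As $S\notin\I$ and $S\setminus A\in\I$, the set $A$ is infinite, hence $b$-divergent as a subset of the $b$-divergent set $A_0$. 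This proves $S$ is $b$-divergent mod $\I$, so Theorem~\ref{Last:corollary} applies and gives $\bar{x}\in t_\uu^\I(\T)\iff\Ix\&\IIx$.

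It remains to show that $\IIx$ is vacuously true. Suppose for contradiction that $D\in\P(\N)\setminus\I$ satisfies $D\subseteq^\I S$ and $D-1$ is $b$-bounded. By Lemma~\ref{Bu}, the hypothesis ``$\N$ is $b$-divergent mod $\I$" forces every $b$-bounded subset of $\N$ to belong to $\I$, so $D-1\in\I$. Since $\I$ is translation invariant, $D=(D-1)+1\in\I$, contradicting $D\notin\I$. Therefore no such $D$ exists, and $\IIx$ holds for trivial reasons. Combining this with the equivalence from Theorem~\ref{Last:corollary} yields $\bar{x}\in t_\uu^\I(\T)\iff\Ix$, completing the proof.

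There is no serious obstacle here: the main content is the observation that under ``$\N$ is $b$-divergent mod $\I$" the $b$-boundedness condition appearing in $\IIx$ is incompatible with $D\notin\I$, so $\IIx$ disappears. The small care needed is to handle the degenerate case $S\in\I$ separately (where Theorem~\ref{Last:corollary} would not directly apply if one insisted on $S$ being $b$-divergent mod $\I$) and to verify cleanly that $S$ inherits $b$-divergence mod $\I$ from $\N$ via the intersection $A_0\cap S$.
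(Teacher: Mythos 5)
Your proposal is correct and follows essentially the same route as the paper: dispose of the case $S\in\I$ via Lemma~\ref{Lemma2.2}, note that $S$ inherits $b$-divergence mod $\I$ from $\N$, use Lemma~\ref{Bu} (plus translation invariance) to make $\IIx$ vacuous, and conclude by Theorem~\ref{Last:corollary}. The only difference is that you spell out the intersection argument $A_0\cap S$ and the vacuousness of $\IIx$ in more detail than the paper does.
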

\begin{proof}
Every $b$-bounded subset of $\N$ is in $\I$ by Lemma~\ref{Bu}, hence $\IIx$ is automatically satisfied.
If $S\in\I$, then $\bar x\in t_\uu^\I(\T)$ by Lemma~\ref{Lemma2.2}, and $\Ix$ obviously holds. So, assume that $S\not\in\I$; since $\N$ is $b$-divergent mod $\I$, also $S$ is $b$-divergent mod $\I$. Hence, $\bar x\in t_\uu^\I(\T)$ is equivalent to $\Ix$ in view of Theorem~\ref{Last:corollary}.
\end{proof}

\section{Under the $\I$-splitting property}\label{Sec:I-spl}

Given $\uu\in\A$ and a free ideal $\I$, recall from the introduction that a subset $A$ of $\N$ has the \emph{$\I$-splitting property} if it has the form $A=B\sqcup D$, with $B$ $b$-bounded mod $\I$ and $D$ $b$-bounded mod $\I$. Moreover, we say that \emph{$\uu$ has the $\I$-splitting property} if $\N_+$ has the $\I$-splitting property.
Equivalently, as in \cite[Definition 3.1]{Ghosh}, $\uu$ has the $\I$-splitting property if there exists a partition $\N=B \sqcup D$ such that: 
\begin{itemize}
    \item[(1)] $B$ and $D$ are either empty or $B, D\not\in\I$;
    \item[(2)] if $B\not\in\I$, then $B$ is $b$-bounded mod $\I$;
    \item[(3)] if $D\not\in\I$, then $D$ is $b$-divergent mod $\I$. 
\end{itemize}
The dichotomy in item (1) is not restrictive as if $\N=B \sqcup D$ and $\emptyset\neq D\in\I$, then $B\subseteq_\I\N$, and analogously, $D\subseteq_\I\N$ when $B\in\I$.

%

\begin{remark}
Let $\uu\in\A$, let $\I$ be a translation invariant free $P$-ideal of $\N$ and $x\in[0,1)$ with $S=\supp(x)$.
Assume that $S$ has the $\I$-splitting property, namely, $S=B\sqcup D$ with $B$ $b$-bounded mod $\I$ and $D$ $b$-divergent mod $\I$. 
Write $x=x_B+x_D$ with $$x_B=\sum_{n\in B}\frac{c_n}{u_n}\quad\text{and}\quad x_D=\sum_{n\in D}\frac{c_n}{u_n}.$$

Since we are looking for sufficient conditions to get $\bar x\in t_\uu^\I(\T)$, we observe that if simultaneously $\bar x_B\in t_\uu^\I(\T)$ and $\bar x_D\in t_\uu^\I(\T)$, then $\bar x\in t_\uu^\I(\T)$. We show below that this implication can be inverted. 
\end{remark}

In the following result we abbreviate the property $\Ix\&\IIx$ by $\mathtt D_x$.

\begin{theorem}\label{Lemma1}
Let $\uu\in\A$, let $\I$ be a translation invariant free $P$-ideal of $\N$ and $x\in[0,1)$ with $S=\supp(x)$.
Assume that $S$ has the $\I$-splitting property, namely, $S=B\sqcup D$ with $B$ $b$-bounded mod $\I$ and $D$ $b$-divergent mod $\I$. 
Write $x=x_B+x_D$ with $$x_B=\sum_{n\in B}\frac{c_n}{u_n}\quad\text{and}\quad x_D=\sum_{n\in D}\frac{c_n}{u_n}.$$
Consider the following properties:
\begin{itemize}
\item[(1)] $\mathtt A_{x_B}$ holds and either $D\in\I$ or $D\not\in\I$ and $\mathtt D_{x_D}$ holds;
\item[(2)] $\bar x_B\in t_\uu^\I(\T)$ and $\bar x_{D}\in t_\uu^\I(\T)$;
\item[(3)] $\bar x\in t_\uu^\I(\T)$;
\item[(4)] $\mathtt D_{x_D}$ holds;
\item[(5)] $\bar x_D\in t_\uu^\I(\T)$.
\end{itemize}
Then (1)$\Leftrightarrow$(2)$\Leftrightarrow$(3)$\Rightarrow$(4)$\Leftrightarrow$(5).
\end{theorem}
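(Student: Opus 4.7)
The plan is to establish each arrow of the claimed chain by invoking the two structural results, Theorem~\ref{Nuovo:Th} and Theorem~\ref{Last:corollary}, applied separately to $x_B$ and $x_D$, together with the general description of $t_\uu^\I(\T)$ supplied by Theorem~\ref{conjecture}.

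For (1)$\Leftrightarrow$(2), observe that $\supp(x_B)=B$ is $b$-bounded mod $\I$ by hypothesis, so Theorem~\ref{Nuovo:Th} gives $\bar x_B\in t_\uu^\I(\T)\Leftrightarrow\mathtt A_{x_B}$; meanwhile, if $D\in\I$ then $\supp(x_D)\subseteq D\in\I$ and Lemma~\ref{Lemma2.2} yields $\bar x_D\in t_\uu^\I(\T)$ automatically, while if $D\notin\I$ then $\supp(x_D)=D$ is $b$-divergent mod $\I$ and Theorem~\ref{Last:corollary} gives $\bar x_D\in t_\uu^\I(\T)\Leftrightarrow\mathtt D_{x_D}$. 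The implication (2)$\Rightarrow$(3) is immediate since $\bar x=\bar x_B+\bar x_D$ and $t_\uu^\I(\T)$ is a subgroup of $\T$, and (4)$\Leftrightarrow$(5) is the same application of Theorem~\ref{Last:corollary} (or Lemma~\ref{Lemma2.2}) to $x_D$ taken in isolation.

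The substantive step is (3)$\Rightarrow$(2), from which (3)$\Rightarrow$(4) then follows by composing with the (2)$\Rightarrow$(4) half of (1)$\Leftrightarrow$(2). Assuming $\bar x\in t_\uu^\I(\T)$, Theorem~\ref{conjecture} hands us $\ax$ and $\bx$ for $x$, and the task is to extract from these the full $\mathtt A_{x_B}$ (to conclude $\bar x_B\in t_\uu^\I(\T)$ via Theorem~\ref{Nuovo:Th}) and, when $D\notin\I$, $\mathtt D_{x_D}$ (to conclude $\bar x_D\in t_\uu^\I(\T)$ via Theorem~\ref{Last:corollary}). Fixing a $b$-bounded $B_0\subseteq_\I B$ and a $b$-divergent $D_0\subseteq_\I D$, I would apply $\axuno$ to $B_0$ to get $B_0\subseteq^\I S_b$ and $B_0+1\subseteq^\I S$; subtracting the $\I$-negligible piece $(B_0+1)\cap D$, which is negligible because any $b$-bounded subset of a $b$-divergent mod $\I$ set lies in $\I$ (a localization of Lemma~\ref{Bu}), should deliver $\mathrm{(i}_{x_B}\mathrm{)}$ and $\mathrm{(ii}_{x_B}\mathrm{)}$, while $\mathrm{(a2}_{x_B}\mathrm{)}$ transfers from $\axdue$ for $x$. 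For $\mathtt D_{x_D}$, applying $\bx$ to $D_0$ produces the subset witnessing $\mathrm{(I}_{x_D}\mathrm{)}$; for $\mathrm{(II}_{x_D}\mathrm{)}$, given $E\notin\I$ with $E\subseteq^\I D$ and $E-1$ $b$-bounded, note that $(E-1)\cap D\in\I$ so the bulk of $E-1$ lies in $B\cup S^c$, and one applies the $\dageq$ form of $\axdue$ (Lemma~\ref{ax2p}) on the piece in $S^c$ and $\axuno$ on the piece in $B$, pasting the outputs into a single $E'\subseteq_\I E$ along which $c_m(x_D)/b_m\to 0$.

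The hard part will be verifying $\mathrm{(i}_{x_B}\mathrm{)}$ and $\mathrm{(II}_{x_D}\mathrm{)}$ in this scheme: both require pinning down precisely which slice of $(B_0+1)$ or of $(E-1)$ is $\I$-negligible, and the argument pivots on the combinatorial separation between $B$ (where $b_n$ stays bounded) and $D$ (where $b_n$ diverges mod $\I$) encoded by the $\I$-splitting hypothesis on $S$. Once this separation is exploited, the remaining arrows of the diagram assemble from the pieces listed above.
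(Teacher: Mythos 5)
Your treatment of (1)$\Leftrightarrow$(2), (2)$\Rightarrow$(3) and (4)$\Leftrightarrow$(5) is correct and coincides with the paper's. Where you part ways is in the remaining arrows: the paper never attempts to extract $\mathtt A_{x_B}$ combinatorially from $\ax\&\bx$. It proves (3)$\Rightarrow$(4) first, deduces $\bar x_D\in t_\uu^\I(\T)$ via (4)$\Leftrightarrow$(5), and then obtains $\bar x_B=\bar x-\bar x_D\in t_\uu^\I(\T)$ for free from the subgroup property, after which $\mathtt A_{x_B}$ follows from Theorem~\ref{Nuovo:Th}. Your plan to prove (3)$\Rightarrow$(2) head-on forces you through exactly the two steps you flag as ``the hard part'', and both break down.

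First, for $\mathrm{(i}_{x_B}\mathrm{)}$ you assert that $(B_0+1)\cap D\in\I$ ``because any $b$-bounded subset of a $b$-divergent mod $\I$ set lies in $\I$''. But $b$-boundedness of a set $X$ is a condition on $(b_n)_{n\in X}$: the $b$-boundedness of $B_0$ controls $b_n$ for $n\in B_0$ and says nothing about $b_m$ for $m\in B_0+1$. The set $(B_0+1)\cap D$ sits inside $D$, so when infinite it is $b$-divergent rather than $b$-bounded, and the localization of Lemma~\ref{Bu} is inapplicable. Second, for $\mathrm{(II}_{x_D}\mathrm{)}$ you propose to apply $\axuno$ to the piece of $E-1$ lying in $B$; but $\axuno$ yields $(c_{n+1}+1)/b_{n+1}\to1$ along a co-$\I$ subset, and since $n+1\in E\subseteq^\I D$ forces $b_{n+1}\to\infty$ there, this gives $c_{n+1}/b_{n+1}\to1$ --- the opposite of the $c_n/b_n\to0$ you need to paste into $E'$. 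These are not repairable technicalities. The element of Remark~\ref{ax1p}(b) (take $b_n=2$ on the odds, $b_n=n$ on the evens, $c_n=b_n-1$ on the odd part of $S$ and $c_n=b_n-2$ on the even part) satisfies $\bar x\in t_\uu(\T)$ and splits $S=B'\sqcup D'$ with $B'$ the odd ($b$-bounded) part and $D'$ the even ($b$-divergent) part; yet $u_m x_{B'}\equiv_\Z \tfrac12+O(1/m)$ along the evens, so $\bar x_{B'}\notin t_\uu(\T)$, and $c_n/b_n=(n-2)/n\to1$ on $D'$, so $\mathrm{(II}_{x_{D'}}\mathrm{)}$ fails. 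Hence with $\I=\F in$ condition (3) holds while (2) and (4) fail: the implications you are trying to prove are false as stated, so no completion of your sketch can succeed. (For what it is worth, the paper's own proof of (3)$\Rightarrow$(4) stumbles at the same point: it applies $\axdue$ to $E-1$ after checking only $(E-1)\cap D\in\I$, whereas $\axdue$ requires $(E-1)\cap S\in\I$, and $(E-1)\cap B$ need not belong to $\I$.)
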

\begin{proof}
(1)$\Leftrightarrow$(2) Since $B$ is $b$-bounded mod $\I$, $\bar x_B\in t_\uu^\I(\T)$ is equivalent to $\mathtt A_{x_B}$ by Theorem~\ref{Nuovo:Th}. Since $D$ is $b$-divergent mod $\I$, $\bar x_D\in t_\uu^\I(\T)$ holds if and only if either $D\in\I$ or $D\not\in\I$ and $\mathtt D_{x_D}$ holds, by Theorem~\ref{Last:corollary}.

\smallskip
(2)$\Rightarrow$(3) is clear.

\smallskip
(3)$\Rightarrow$(4) Assume that $D\not\in\I$, let $D'\subseteq_\I D$ be $b$-divergent and let $x_{D'}=\sum_{n\in D'}\frac{c_n}{u_n}$. First we see that $\bx\Rightarrow\mathrm{(I}_{x_D}\mathrm{)}$: since $D=\supp(x_D)\not\in\I$, also $D'\not\in\I$. Then $\bx$ applied to $D'$ gives $\mathrm{(I}_{x_{D'}}\mathrm{)}$, and so also $\mathrm{(I}_{x_D}\mathrm{)}$ holds by Remark~\ref{Iae}.
Now we verify that $\axdue\Rightarrow\mathrm{(II}_{x_D}\mathrm{)}$. 
Let $E\in\P(\N)\setminus \I$ with $E\subseteq^\I D$ and $E-1$ $b$-bounded. Since $E\not\in\I$, also $E-1\not\in\I$ being $\I$ translation invariant. As $D$ is $b$-divergent  mod $\I$ and $E-1$ is $b$-bounded, $(E-1)\cap D\in\I$. By $\axdue$, there exists $A'\subseteq_\I E-1$ such that $\lim\limits_{n\in A'}\frac{c_{n+1}}{b_{n+1}}=0$. Consequently, $E'=A'+1\subseteq_\I E$ and $\lim\limits_{n\in E'}\frac{c_n}{b_n}=0$, namely, $\mathrm{(II}_{x_D}\mathrm{)}$ holds. 

\smallskip
(4)$\Leftrightarrow$(5) This equivalence is given by Theorem~\ref{Last:corollary}. 

\smallskip
(3)$\Rightarrow$(2) 
Assume that $\bar x\in t_\uu^\I(\T)$. If $D\in\I$, then $\bar x_{D}\in t_\uu^\I(\T)$ by Lemma~\ref{Lemma2.2}. If $D\not\in\I$, then $\bar x_D\in t_\uu^\I(\T)$ by the implication (3)$\Rightarrow$(4) and Theorem~\ref{Last:corollary}. In particular, $\bar x\in t_\uu^\I(\T)$ implies $\bar x_{D}\in t_\uu^\I(\T)$, and so $\bar x_{B} = \bar x - \bar x_{D}\in t_\uu^\I(\T)$ as well. 
\end{proof}

It is easy to verify that (4) does not imply (3) in general.

\begin{remark}\label{Lemma1**} 
Since in the sequel we will be interested mainly in the equivalence (1)$\Leftrightarrow$(3) from Theorem~\ref{Lemma1} (see Question~\ref{QuestionSplitting}) and in order to ease the reader, we formulate here explicitly item (1) of that theorem (saying ``$\mathtt A_{x_B}$ and $\mathtt D_{x_D}$ hold") in expanded form. Namely, the following conditions simultaneously hold, where $\supp(x_B)=B\cap S$, $\supp_b(x_B)=B\cap S_b$ and $\supp(x_D)=D\cap S$:
\begin{itemize}
\item[${\mathrm{(i}_{x_B}\mathrm{)}}$] $\supp(x_B)+1\subseteq^\I \supp(x_B)$; 
\item[${\mathrm{(ii}_{x_B}\mathrm{)}}$] $\supp_b(x_B)\subseteq_\I \supp(x_B)$;
\item[${\mathrm{(a2}_{x_B}\mathrm{)}}$] if $A\in \P(\N) \setminus\I$ is $b$-bounded  and $A\cap \supp(x_B) \in \I$, then $(A+1)\cap \supp(x_B)\in \I$;
\item[${\mathrm{(I}_{x_D}\mathrm{)}}$]  either $D\in\I$ or $D\not\in\I$ and there exists $E\subseteq_\I \supp(x_D)$ such that $\lim\limits_{n\in E}\varphi\left(\frac{c_n}{b_n}\right)=0$;
\item[${\mathrm{(II}_{x_D}\mathrm{)}}$] for every $E\in\P(\N)\setminus\I$ with $E\subseteq^\I \supp(x_D)$ such that $E-1$ is $b$-bounded, there exists $E'\subseteq_\I E$ such that $\lim\limits_{n\in E'}\frac{c_n}{b_n}=0$.
\end{itemize}
\end{remark}

We recall the following result from~\cite{I-torsion}, as  we are interested in the subsequent question.

\begin{corollary}[\cite{I-torsion}]\label{ThGh:May29}  
Let $\uu\in\mathcal A$, let $\I$ a translation invariant free $P$-ideal of $\N$ and $x \in[0,1)$ with $S=\supp(x)$ and $S_b=\supp_b(x)$. If $\uu$ has the $\I$-splitting property  witnessed by the partition $\N=B\sqcup D$, then $\bar x \in t^{\I}_\uu(\T)$ if and only if the following conditions hold:
\begin{itemize}
   \item[$\unox$] $(B\cap S)+1 \subseteq^{\I} S$, $B\cap S \subseteq^{\I} S_b$ and if $B\cap S \not\in \I$ then there exists $C\subseteq_{\I} B\cap S$ such that $\lim\limits_{n \in C}\frac{c_{n+1}+1}{b_{n+1}}=1$; 
   \item[$\duex$] if $B\setminus S\not\in\I$, then there exists  $C \subseteq_{\I} B\setminus S$  such that $\lim\limits_{n \in C}\frac{c_{n+1}}{b_{n+1}}=0$;
   \item[$\trex$] if $D\cap S\not\in\I$, then there exists $E \subseteq_{\I} D\cap S$ such that $\lim\limits_{n \in E} \varphi\left(\frac{c_n}{b_n}\right)=0$.
\end{itemize}
\end{corollary}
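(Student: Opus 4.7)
The plan is to derive Corollary~\ref{ThGh:May29} directly from Theorem~\ref{conjecture}, localizing the universal clauses $\ax$ and $\bx$ onto the pieces $B$ and $D$ of the splitting $\N=B\sqcup D$ of $\uu$. The key reduction is that any $b$-bounded $A\in\P(\N)\setminus\I$ satisfies $A\subseteq^\I B$: indeed, for a $b$-divergent witness $D'\subseteq_\I D$, the intersection $A\cap D'$ is finite (being both $b$-bounded and $b$-divergent) while $A\cap(D\setminus D')\in\I$, so $A\cap D\in\I$. Symmetrically, every $b$-divergent $A\in\P(\N)\setminus\I$ satisfies $A\subseteq^\I D$. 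This lets us concentrate the hypothesis of $\ax$ on sets $A\subseteq^\I B\cap S$ or $A\subseteq^\I B\setminus S$ and the hypothesis of $\bx$ on sets $A\subseteq^\I D\cap S$.

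For the necessity, assume $\bar x\in t_\uu^\I(\T)$, so $\ax$ and $\bx$ hold for every $A\in\P(\N)\setminus\I$. When $B\cap S\not\in\I$, choose a $b$-bounded $A'\subseteq_\I B\cap S$ (e.g., $A':=B'\cap S$ for a $b$-bounded witness $B'\subseteq_\I B$). Applying $\axuno$ to $A'$ yields $A'+1\subseteq^\I S$, $A'\subseteq^\I S_b$ and some $A''\subseteq_\I A'$ with $\lim\limits_{n\in A''}\frac{c_{n+1}+1}{b_{n+1}}=1$; using $A'\subseteq_\I B\cap S$ together with translation invariance of $\I$, these inflate to $(B\cap S)+1\subseteq^\I S$ and $B\cap S\subseteq^\I S_b$, while $C:=A''\subseteq_\I B\cap S$ is the witness required by $\unox$. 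Analogously, $\duex$ arises from $\axdue$ applied to a $b$-bounded $A'\subseteq_\I B\setminus S$ (whose intersection with $S$ is empty, hence in $\I$), and $\trex$ from $\bx$ applied to a $b$-divergent $E'\subseteq_\I D\cap S$.

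For the sufficiency, let $A\in\P(\N)\setminus\I$. If $A$ is $b$-bounded, the reduction gives $A\subseteq^\I B$. When moreover $A\subseteq^\I S$, so $A\subseteq^\I B\cap S$, $\unox$ supplies $A+1\subseteq^\I(B\cap S)+1\subseteq^\I S$, $A\subseteq^\I B\cap S\subseteq^\I S_b$, and $A\cap C\subseteq_\I A$ on which $\lim\limits_{n\in A\cap C}\frac{c_{n+1}+1}{b_{n+1}}=1$, verifying $\axuno$. When instead $A\cap S\in\I$, we find $A\subseteq^\I B\setminus S$, hence $B\setminus S\not\in\I$, and $\duex$ yields $A\cap C\subseteq_\I A$ satisfying $\axdue$. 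If $A$ is $b$-divergent, then $A\subseteq^\I D$; set $B'':=A\setminus S$ when $A\cap S\in\I$ (so $c_n=0$ on $B''$), and $B'':=(A\cap E)\cup(A\setminus S)$ when $A\cap S\not\in\I$, in which case $D\cap S\not\in\I$ and $E$ is the witness supplied by $\trex$. In both subcases $B''\subseteq_\I A$ and $\lim\limits_{n\in B''}\varphi\left(\frac{c_n}{b_n}\right)=0$, verifying $\bx$.

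The main obstacle is notational rather than conceptual: the argument demands disciplined bookkeeping of $\subseteq^\I$, $\subseteq_\I$ and $=^\I$ under intersections, set-differences and $\pm 1$-translations, invoking translation invariance of $\I$ at every step. A secondary subtlety is ensuring non-triviality of the extracted witnesses: since $A\not\in\I$ and $A\cap C\subseteq_\I A$ imply $A\cap C\not\in\I$, the set $A\cap C$ is infinite, so the limit clauses in $\unox$, $\duex$ and $\trex$ genuinely constrain the tail behaviour on the desired subsets.
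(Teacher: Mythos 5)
Your argument is correct, and both directions check out: the key reduction (every $b$-bounded $A\notin\I$ has $A\cap D\in\I$, every $b$-divergent $A\notin\I$ has $A\cap B\in\I$, via the $b$-bounded witness $B'\subseteq_\I B$ and the $b$-divergent witness $D'\subseteq_\I D$) is exactly what is needed to localize the universally quantified clauses of Theorem~\ref{conjecture} onto $B\cap S$, $B\setminus S$ and $D\cap S$, and your bookkeeping with $\subseteq^\I$, $\subseteq_\I$ and translation invariance is sound. Note only that the first two clauses of $\unox$ are unconditional, so in the necessity direction you should add the (trivial) observation that they hold automatically when $B\cap S\in\I$.

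It is worth flagging how your route relates to the paper's. The paper does not reprove this corollary: it recalls it from \cite{I-torsion}, where it is derived (as you do) from the main element-wise description, i.e.\ Theorem~\ref{conjecture}. What the paper itself does in \S\ref{Sec:I-spl} is different and deliberately partial: via Theorem~\ref{Lemma1} (the decomposition $x=x_B+x_D$ and the equivalences $\bar x\in t_\uu^\I(\T)\Leftrightarrow \mathtt A_{x_B}\,\&\,\mathtt D_{x_D}$) together with Claim~\ref{TheClaim}, it obtains only the implication $\bar x\in t_\uu^\I(\T)\Rightarrow\unox\&\duex\&\trex$, and it explicitly leaves open (Question~\ref{QuestionSplitting}) whether the converse can be extracted from Theorem~\ref{Lemma1}. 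Your proof establishes the full equivalence but by going back to Theorem~\ref{conjecture} rather than through Theorem~\ref{Lemma1}, so it does not answer that open question; on the other hand, it is self-contained modulo Theorem~\ref{conjecture} and avoids the intermediate conditions $\mathtt A_{x_B}$, $\mathrm{(I}_{x_D}\mathrm{)}$, $\mathrm{(II}_{x_D}\mathrm{)}$ altogether. The trade-off is that the paper's route isolates which of the localized conditions are genuinely needed (cf.\ Corollary~\ref{alleq*}, which shows $\mathrm{(II}_{x_D}\mathrm{)}$ can be dropped under the $\I$-splitting property), information that your direct quantifier-localization does not by itself reveal.
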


\begin{question}\label{QuestionSplitting}
Does Theorem~\ref{Lemma1} imply Corollary~\ref{ThGh:May29} assuming that $\uu$ has the $\I$-splitting property?
\end{question}

In the following claim we find a partial answer to Question~\ref{QuestionSplitting}. More precisely, 
in Claim~\ref{TheClaim} we see that the conditions $\mathtt A_{x_B}$ and ${\mathrm{(I}_{x_D}\mathrm{)}}$ (without ${\mathrm{(II}_{x_D}\mathrm{)}}$) used in Theorem~\ref{Lemma1}(1) imply the condition $\unox\&\duex\&\trex$ used in Corollary~\ref{ThGh:May29}. 

\begin{claim}\label{TheClaim}
Let $\uu\in\mathcal A$, let $\I$ a translation invariant free $P$-ideal of $\N$ and $x \in[0,1)$ with $S=\supp(x)$ and $S_b=\supp_b(x)$. Assume that $\uu$ has the $\I$-splitting property witnessed by the partition $\N=B\sqcup D$ with $B$ $b$-bounded mod $\I$ and $D$ $b$-divergent mod $\I$, and let $x_B=\sum_{n\in B\cap S}\frac{c_n}{u_n}$ and $x_D=\sum_{n\in D\cap S}\frac{c_n}{u_n}$. Then $\mathtt T_{x_B}\& \mathrm{(I}_{x_D}\mathrm{)}$ implies $\unox\&\duex\&\trex$.
\end{claim}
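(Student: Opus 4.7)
The plan is to prove $\unox$, $\duex$ and $\trex$ separately, starting with the easy ones and isolating the delicate argument.

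First, $\trex$ is essentially a restatement of the hypothesis: if $D\cap S\in\I$ there is nothing to show, and otherwise, since $\supp(x_D)=D\cap S$, $\trex$ is exactly what $\mathrm{(I}_{x_D}\mathrm{)}$ supplies.

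Next, for $\unox$, the inclusions $(B\cap S)+1\subseteq^\I S$ and $B\cap S\subseteq^\I S_b$ come directly from $\mathrm{(i}_{x_B}\mathrm{)}$ and $\mathrm{(ii}_{x_B}\mathrm{)}$ (via $(B\cap S)+1\subseteq^\I B\cap S\subseteq S$, and the reformulation of $B\cap S_b\subseteq_\I B\cap S$ as $B\cap(S\setminus S_b)\in\I$). For the limit sub-condition when $B\cap S\notin\I$, I would chain $(B\cap S)+1\subseteq^\I B\cap S\subseteq^\I S_b$ to get $(B\cap S)+1\subseteq^\I S_b$, translate by $-1$ using translation invariance to obtain $B\cap S\subseteq^\I S_b-1$, and take $C:=(B\cap S)\cap(S_b-1)\subseteq_\I B\cap S$: for $n\in C$ one has $n+1\in S_b$, whence $\frac{c_{n+1}+1}{b_{n+1}}=1$ identically on $C$.

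For $\duex$ (assuming $B\setminus S\notin\I$) I would propose $C:=\bigl((B\setminus S)\setminus(S-1)\bigr)\cup A'$, where $A'$ is a $\subseteq_\I$-refinement of $A:=(B\setminus S)\cap(S-1)$. The first piece contributes trivially since $c_{n+1}=0$ there. Split $A=A_B\sqcup A_D$ according as $n+1\in B\cap S$ or $n+1\in D\cap S$. By $\mathrm{(iii}_{x_B}\mathrm{)}$ the set $A_B\subseteq((B\cap S)-1)\setminus S$ lies in $\I$, since $(B\cap S)-1\subseteq^\I B\cap S\subseteq S$. For $A_D$, use that $D$ is $b$-divergent mod $\I$ witnessed by some $D_0\subseteq_\I D$ with $b_n\to\infty$ along $D_0$: from $A_D+1\subseteq D\cap S$ one deduces $A_D\subseteq^\I D_0-1$, so $A_D\cap(D_0-1)$ is cofinite in $A_D$ mod $\I$ and satisfies $b_{n+1}\to\infty$ along it.

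The main obstacle, which I expect to be the crux, is upgrading from $\|c_{n+1}/b_{n+1}\|\to 0$ (obtained by intersecting further with $E-1$, for the $E$ supplied by $\mathrm{(I}_{x_D}\mathrm{)}$) to the one-sided $c_{n+1}/b_{n+1}\to 0$ needed in $\duex$. The resulting set splits into a ``near $0$'' piece and a residual ``near $1$'' piece (where $n+1$ approximates $S_b$ in the divergent regime). Ruling out the residual piece is delicate: I would try to exploit $\mathrm{(ii}_{x_B}\mathrm{)}$ together with the partition $\N=B\sqcup D$ to force such shifted indices into $B\cap S$ modulo $\I$, contradicting $n+1\in D$. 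If this structural argument cannot be completed, the natural remedy is to replace $\mathrm{(I}_{x_D}\mathrm{)}$ by $\mathtt D_{x_D}$ (adjoining $\mathrm{(II}_{x_D}\mathrm{)}$), in which case the conclusion follows immediately from Theorem~\ref{Lemma1} together with Corollary~\ref{ThGh:May29}.
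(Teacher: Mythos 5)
Your handling of $\trex$ and of $\unox$ is correct and essentially identical to the paper's argument (the paper takes $C=\bigl(((B\cap S)+1)\cap(B\cap S_b)\bigr)-1$ where you take $(B\cap S)\cap(S_b-1)$; these coincide mod $\I$). The real content of your write-up is in $\duex$, where you have isolated exactly the case that the paper's own proof overlooks. The paper derives $\duex$ from $\mathrm{(a2}_{x_B}\mathrm{)}$ alone, applying it to $B_1\setminus S$ for a $b$-bounded $B_1\subseteq_\I B$; but $\mathrm{(a2}_{x_B}\mathrm{)}$ is a condition on the element $x_B$, so it only yields $\bigl((B_1\setminus S)+1\bigr)\cap\supp(x_B)\in\I$ --- it controls $c_{n+1}(x_B)$, not $c_{n+1}(x)$, and gives no information about those $n\in B\setminus S$ with $n+1\in D\cap S$, i.e.\ precisely your set $A_D$. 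Your treatment of the complementary pieces ($c_{n+1}=0$ when $n+1\notin S$; $A_B\in\I$ via $\mathrm{(iii}_{x_B}\mathrm{)}$) is correct.

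The obstacle you flag on $A_D$ is not a failure of ingenuity on your part: the implication is false there, so no argument can close it. Take $\I=\F in$, $b_1=2$, $b_n=2$ for $n$ even, $b_n=n$ for odd $n\geq 3$, $B$ the even numbers, $D$ the odd ones, $S=\{3,5,7,\dots\}$ and $c_n=b_n-1$ for $n\in S$. Then $x_B=0$, so $\mathtt T_{x_B}$ holds vacuously, and $\mathrm{(I}_{x_D}\mathrm{)}$ holds since $\varphi(c_n/b_n)=\varphi(-1/n)\to 0$ on $S$; but $B\setminus S=B\notin\I$ and $c_{n+1}/b_{n+1}=n/(n+1)\to 1$ on all of $B$, so $\duex$ fails (consistently, $u_{2k+1}x\to\tfrac12$ in $\T$, so $\bar x\notin t_\uu(\T)$, reflecting the failure of $\mathrm{(II}_{x_D}\mathrm{)}$). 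Your proposed remedy is the right one: with $\mathrm{(II}_{x_D}\mathrm{)}$ adjoined, the set $(A_D+1)\cap(B_1+1)$, for a $b$-bounded $B_1\subseteq_\I B$, is admissible for $\mathrm{(II}_{x_D}\mathrm{)}$ and supplies the subset on which $c_{n+1}/b_{n+1}\to 0$. So the claim should be read with $\mathtt D_{x_D}$ in place of $\mathrm{(I}_{x_D}\mathrm{)}$; as stated, both the claim and the corollary that depends on it are incorrect.
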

\begin{proof}
In order to discuss in more detail the relation between the conditions in Theorem~\ref{Lemma1}(1) and those in Corollary~\ref{ThGh:May29}, let us call $\unox'$, $\unox''$ and $\unox'''$ the three conditions in $\unox$, namely,
\begin{itemize}
\item[$\unox'$] $(B\cap S)+1\subseteq^\I S$;
\item[$\unox''$] $B\cap S\subseteq^\I S_b$;
\item[$\unox'''$] if $B\cap S\not\in\I$, then there exists $C\subseteq_\I B\cap S$ such that $\lim\limits_{n\in C+1}\frac{c_n+1}{b_n}=1$.
\end{itemize}
 
${\mathrm{(i}_{x_B}\mathrm{)}}\Rightarrow \unox'$  Assume ${\mathrm{(i}_{x_B}\mathrm{)}}$, that is, $(B\cap S)+1\setminus(B\cap S)\in\I$. Since $(B\cap S)+1\setminus S\subseteq (B\cap S)+1\setminus (B\cap S)$, also $(B\cap S)+1\setminus  S\in\I$, that is, $\unox'$ holds.

\smallskip
${\mathrm{(ii}_{x_B}\mathrm{)}}\Leftrightarrow\unox''$ Clearly, ${\mathrm{(ii}_{x_B}\mathrm{)}}$ means that $(B\cap S)\setminus(B\cap S_b)\in\I$, and this is equivalent to $(B\cap S)\setminus S_b\in\I$, which is $\unox''$.

\smallskip
${\mathrm{(i}_{x_B}\mathrm{)}}\&{\mathrm{(ii}_{x_B}\mathrm{)}}\Rightarrow\unox'''$ By ${\mathrm{(ii}_{x_B}\mathrm{)}}$, $(B\cap S)\setminus(B\cap S_b)\in\I$, while ${\mathrm{(i}_{x_B}\mathrm{)}}$ means that $((B\cap S)+1)\setminus(B\cap S)\in\I$, hence, also $((B\cap S)+1)\setminus(B\cap S_b)\in\I$ as $B\cap S\subseteq_\I B\cap S_b$. Let $D=((B\cap S)+1)\cap(B\cap S_b)$ and $C=D-1$, so that $D\subseteq_\I (B\cap S)+1$ and $C\subseteq_\I B\cap S$.
Moreover, since $c_n=b_n-1$ for all $n\in D$ as $D\subseteq S_b$, we conclude that $\lim\limits_{n\in C+1}\frac{c_n+1}{b_n}=\lim\limits_{n\in D}\frac{b_n}{b_n}=1$.

\smallskip
${\mathrm{(a2}_{x_B}\mathrm{)}}\Rightarrow\duex$ Here ${\mathrm{(a2}_{x_B}\mathrm{)}}$ means that, if $A\in\P(\N)\setminus\I$ is $b$-bounded and $A\cap (B\cap S)\in\I$, then there exists $C\subseteq_\I A$ such that $\lim\limits_{n\in C+1}\frac{c_n}{b_n}=0$.

Assume that $B\setminus S\in\I$. Let $B_1\subseteq_\I B$ with $B_1$ $b$-bounded. Then also $B_1\setminus S\in\I$ and we apply ${\mathrm{(a2}_{x_B}\mathrm{)}}$ to $B_1\setminus S$. Then there exists $C\subseteq_\I B_1\setminus S$ such that $\lim\limits_{n\in C+1}\frac{c_n}{b_n}=0$. Clearly, $C\subseteq_\I B$.

\smallskip
The equivalence ${\mathrm{(I}_{x_D}\mathrm{)}}\Leftrightarrow\trex$ is clear.
\end{proof}

As a consequence of Claim~\ref{TheClaim} we find a partial answer to Question~\ref{QuestionSplitting}, i.e.,  by applying Theorem~\ref{Lemma1} and Claim~\ref{TheClaim}, we obtain the implication $\bar x \in t_\uu^\I(\T)\Rightarrow \unox\&\duex\&\trex$ in Corollary~\ref{ThGh:May29}. 
The question of whether the inverse implication $\unox\&\duex\&\trex \Rightarrow \bar x \in t_\uu^\I(\T)$
can be deduced from Theorem~\ref{Lemma1} under the assumption that  $\uu$ has the $\I$-splitting property remains still open. 

\smallskip
Another consequence of Claim~\ref{TheClaim} is the following corollary showing that 
the condition ${\mathrm{(II}_{x_D}\mathrm{)}}$ in Theorem~\ref{Lemma1}(1) can be relaxed under the stronger assumption that $\uu$ has the $\I$-splitting property.
 
\begin{corollary}\label{alleq*}
Let $\uu\in\mathcal A$, let $\I$ a translation invariant free $P$-ideal of $\N$ and $x \in[0,1)$ with $S=\supp(x)$ and $S_b=\supp_b(x)$. Assume that $\uu$ has the $\I$-splitting property  witnessed by the partition $\N=B\sqcup D$ and let $x=x_B+x_D$ with $x_B=\sum_{n\in B\cap S}\frac{c_n}{u_n}$ and  $x_D=\sum_{n\in D\cap S}\frac{c_n}{u_n}$. Then $\bar x \in t_\uu^\I(\T)$ if and only if $\mathtt A_{x_B}$ and ${\mathrm{(I}_{x_D}\mathrm{)}}$ hold.
\end{corollary}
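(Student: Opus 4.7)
The plan is to derive the equivalence by combining Claim~\ref{TheClaim}, Corollary~\ref{ThGh:May29}, Theorem~\ref{Lemma1}, and Theorem~\ref{Nuovo:Th}, exploiting the additional strength of the hypothesis that $\uu$ (rather than merely $S$) has the $\I$-splitting property. Note that the condition $\mathrm{(II}_{x_D}\mathrm{)}$ is the only component of $\mathtt{D}_{x_D}$ which is present in Theorem~\ref{Lemma1}(1) but absent from the statement at hand, so the real content is to show that under the splitting of $\uu$, $\mathrm{(II}_{x_D}\mathrm{)}$ is redundant.

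For the sufficiency ($\Leftarrow$), I would assume $\mathtt A_{x_B}$ and $\mathrm{(I}_{x_D}\mathrm{)}$ and apply Claim~\ref{TheClaim} to conclude that $\unox\&\duex\&\trex$ hold. Then Corollary~\ref{ThGh:May29}, whose hypotheses are met since $\uu$ has the $\I$-splitting property by assumption, yields $\bar x\in t_\uu^\I(\T)$. This direction uses the $\I$-splitting of $\uu$ critically, since Claim~\ref{TheClaim} itself relies on the existence of a $b$-bounded $B_1\subseteq_\I B$ to derive $\duex$ from ${\mathrm{(a2}_{x_B}\mathrm{)}}$.

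For the necessity ($\Rightarrow$), the plan is a case analysis based on whether $D\cap S\in\I$. If $D\cap S\in\I$, then $\bar x_D\in t_\uu^\I(\T)$ by Lemma~\ref{Lemma2.2}, so $\bar x_B=\bar x-\bar x_D\in t_\uu^\I(\T)$, and Theorem~\ref{Nuovo:Th}, applied to $x_B$ whose support $B\cap S$ is $b$-bounded mod $\I$, yields $\mathtt A_{x_B}$; the condition $\mathrm{(I}_{x_D}\mathrm{)}$ holds trivially because $\supp(x_D)=D\cap S\in\I$. If instead $D\cap S\not\in\I$, I would first verify that $S$ inherits the $\I$-splitting property from $\uu$: if $B'\subseteq_\I B$ is $b$-bounded, then $B'\cap S\subseteq_\I B\cap S$ is $b$-bounded, so $B\cap S$ is $b$-bounded mod $\I$; similarly, if $D'\subseteq_\I D$ is $b$-divergent, then $D'\cap S\subseteq_\I D\cap S$ and the latter is infinite (since $D\cap S\notin\I$), hence $D'\cap S$ is $b$-divergent as an infinite subset of $D'$, so $D\cap S$ is $b$-divergent mod $\I$. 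Now the implication (3)$\Rightarrow$(1) of Theorem~\ref{Lemma1} gives $\mathtt A_{x_B}\&\mathtt D_{x_D}$, from which $\mathtt A_{x_B}\&\mathrm{(I}_{x_D}\mathrm{)}$ follows at once.

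The main obstacle is conceptual rather than technical: recognising that once we have the $\I$-splitting of $\uu$ in place, Claim~\ref{TheClaim} effectively absorbs the role of $\mathrm{(II}_{x_D}\mathrm{)}$, because the derivation of $\duex$ uses ${\mathrm{(a2}_{x_B}\mathrm{)}}$ applied to $b$-bounded subsets of $B$ rather than to preimages of sets coming from $D\cap S$. The remaining bookkeeping — passage from the splitting of $\uu$ to that of $S$, and the degenerate case $D\cap S\in\I$ — is routine.
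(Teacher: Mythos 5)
Your proof is correct and follows essentially the same route as the paper: the sufficiency is exactly Claim~\ref{TheClaim} followed by Corollary~\ref{ThGh:May29}, and the necessity rests on Theorem~\ref{Lemma1}. Your extra case analysis on whether $D\cap S\in\I$ (the paper simply cites Theorem~\ref{Lemma1} for the necessity) is a welcome precaution, since when $D\cap S\in\I$ the set $D\cap S$ need not be $b$-divergent mod $\I$, so Theorem~\ref{Lemma1} does not formally apply to the decomposition $S=(B\cap S)\sqcup(D\cap S)$ in that degenerate case.
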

\begin{proof}
Assume that $\mathtt A_{x_B}$ 
and ${\mathrm{(I}_{x_D}\mathrm{)}}$ hold.
By Claim~\ref{TheClaim}, this implies that $\unox\&\duex\&\trex$ holds, so $\bar x \in t_\uu^\I(\T)$ by Corollary~\ref{ThGh:May29}.
Vice versa, if $\bar x \in t_\uu^\I(\T)$, then $\mathtt A_{x_B}$ 
and ${\mathrm{(I}_{x_D}\mathrm{)}}$ hold by Theorem~\ref{Lemma1}.
\end{proof}

\end{document}